\newcommand{\ind}[1]{\underset{#1}{\mbox{\,\raisebox{-0.5ex}[0pt][0pt]{$\stackrel
{\text{\raisebox{-0.7ex}[0pt][0pt]{$|$}}}{\smile}$}\,}}}
\newlength {\xxxIndep}
\newlength {\yyyIndep}
\newcommand{\indep}{\mbox{\,\raisebox{-0.5ex}[0pt][0pt]
{$\stackrel{\text{\raisebox{-0.7ex}[0pt][0pt]{$|$}}}{\smile}$}\,}}
\def\Aut{{\rm Aut}}
\def\N{{\mathbb{N}}}
\def\acl{{\rm acl}}
\def\dcl{{\rm dcl}}
\def\tp{{\rm tp}}
\def\eq{{\rm eq}}
\def\aa{\overline{a}}
\def\N{\mathbb{N}}
\def\M{\mathcal{M}}
\def\C{\mathcal{C}}
\def\Psi{(P6) }
\def\C{\mathbb{C}}
\def\a{\hat{a}}
\def\P{\mathcal{P}}
\def\a{\hat{a}}
\newtheorem{theorem}{Theorem}[section]
\newtheorem{lemma}[theorem]{Lemma}
\newtheorem{proposition}[theorem]{Proposition}
\newtheorem{corollary}[theorem]{Corollary}
\newtheorem{definition}[theorem]{Definition}
\newtheorem{remarks}[theorem]{Remarks}
\newtheorem{remark}[theorem]{Remark}
\newtheorem{question}[theorem]{Question}
\newenvironment{proofclaim}{\noindent{}}{%
  \hspace*{\fill}\({\Box_{\text{Claim}}}\)\par\vskip.5ex}
\def\N{{\mathbb{N}}}
\def\acl{{\rm acl}}
\def\dcl{{\rm dcl}}
\def\tp{{\rm tp}}
\def\Aut{{\rm Aut}}
\def\N{\mathbb{N}}
\def\M{\mathcal{M}}
\def\C{\mathcal{C}}
\def\Psi{(P6) }
\def\C{\mathbb{C}}
\def\M{\mathcal{M}}
\def\a{\widehat{a}}
\def\P{\mathcal{P}}
\def\C{\mathcal{C}}
\def\MM{\mathbb{M}}
\def\aa{\bar{a}} 
\def\Ind#1#2{#1\setbox0=\hbox{\({#1x}\)}\kern\wd0\hbox to 0pt{\hss\({#1\mid}\)\hss}
\lower.9\ht0\hbox to 0pt{\hss\({#1\smile}\)\hss}\kern\wd0}
\def\Notind#1#2{#1\setbox0=\hbox{\({#1x}\)}\kern\wd0\hbox to 0pt{\mathchardef
\nn="3236\hss\({#1\nn}\)\kern1.4\wd0\hss}\hbox to 0pt{\hss\({#1\mid}\)\hss}\lower.9\ht0
\hbox to 0pt{\hss\({#1\smile}\)\hss}\kern\wd0}
\def\ind{\mathop{\mathpalette\Ind{}}}
\begin{document}
\section*{}

\title[Higher amalgamation properties]{Higher amalgamation properties in stable theories}

\author{David M. Evans}

\address{%
Department of Mathematics\\
Imperial College London\\
London\\
UK.}

\email{david.evans@imperial.ac.uk}

\author{Jonathan Kirby}
\address{
School of Mathematics\\
University of East Anglia\\
Norwich\\
UK.
}
\email{jonathan.kirby@uea.ac.uk}

\author{Tim Zander}
\address{
  Vision and Fusion Laboratory (IES)\\
  Karlsruhe Institute of Technology (KIT)\\
  Karlsruhe\\
  Germany.
}

\email{tim.zander@kit.edu}

\date{\today}

\begin{abstract} For a complete, stable theory $T$ we construct, in a reasonably canonical way, a related stable theory $T^*$ which has higher independent amalgamation properties over the algebraic closure of the empty-set. The theory $T^*$ is an algebraic cover of $T$ and we give an explicit description of the finite covers involved in the construction of $T^*$ from $T$. This follows an approach of E. Hrushovski.  If $T$ is almost strongly minimal with a $0$-definable strongly minimal set, then we show that $T^*$ has higher amalgamation over any algebraically closed subset.
\newline
\textit{2010 Mathematics Subject Classification:\/} 03C45; 03C99.
\end{abstract}
\maketitle

\section*{Introduction}

Given a (multisorted) complete, stable theory \({T}\) we wish to construct 
 in a reasonably canonical way a stable theory \({T^*}\) in which
 \({T}\) is fully embedded and which has \textit{higher amalgamation properties} (definitions can be found in section 1). This problem was considered in E. Hrushovski's paper \cite{hrushovski} where $T^*$ is obtained by adjoining certain \textit{finite covers}  to $T$ as extra sorts (see  4.3 and 4.11 of  \cite{hrushovski}). The extra sorts are referred to as \textit{generalised imaginary sorts} in \cite{hrushovski}.
 
 In this paper we provide a more detailed version of Hrushovski's construction of $T^*$: the main result is Theorem~\ref{uniquenesscor}. We  give two related approaches to the proof of this result. The first follows the
 approach of \cite{hrushovski} rather closely and we first describe
 this.  In \cite{hrushovski} the theory \({T^*}\) is obtained from
 \({T}\) by adjoining certain finite covers of \({T}\). In particular,
 this is done for 3-uniqueness (and \({4}\)-amalgamation) in 4.3 of
 \cite{hrushovski}, with a precise identification of the finite covers
 which need to be adjoined. The key point is that certain finite
 covers of \({T^*}\) with finite kernel should split over \({T^*}\)
 and by freely adjoining them as extra sorts to \({T}\) we guarantee
 this splitting. For higher amalgamation properties, a similar idea is
 sketched in (\cite{hrushovski}, 4.11): again the key point is that
 certain finite covers, but in general not having finite kernel,
 should split over \({T^*}\). In the general case, the finite covers which need to be adjoined here are not made explicit in \cite{hrushovski}, but are described in detail here.

The second approach to the proof, contained in Section 5, is more direct and avoids the group-theoretic notion of splitting. It makes use of use of a generalisation of the notion of a witness to the failure of amalgamation introduced in \cite{goodkol}. 

The theory $T^*$ which we construct has independent $n$-amalgamation over the algebraic closure of the empty-set, for all natural numbers $n$. In general, it does not then follow that $T^*$ has independent $n$-amalgamation over all algebraically closed sets. 
A natural example of this can be found in the theory $CCM$ of compact, complex manifolds. The algebraic closure of the empty set is a model of the theory and so $CCM$ has  $n$-amalgamation over $\acl(\emptyset)$ for all $n$. However, by Theorem 2.1 of \cite{MR3624418}, it fails to have $4$-amalgamation over some algebraically closed set. Further examples and an attempt towards classifying theories of this type can be found in the Third Author's PhD thesis \cite{uea63347}. 

In Section 6 we give a condition (`separable forking') on $T$ which guarantees that $T^*$ has higher amalgamation properties over all algebraically closed sets (Theorem~\ref{total}). In particular, this occurs if $T$ has Lascar rank 1 or is almost strongly minimal having a strongly minimal set defined over $\acl(\emptyset)$ (Lemma~\ref{sec:separ-indep-noti-4}). We pose the following:

\medskip

\noindent\textit{Question:\/} Suppose $T$ is a complete, stable theory. Is there a stable theory $T^{**}$ in which $T$ is fully embedded and which has $n$-amalgamation over all algebraically closed sets, for all $n$?

\medskip

It is shown in \cite{uea63347} that in the case where $T$ is $\omega$-categorical and $\omega$-stable, we may take $T^{**}$ to be an algebraic cover of $T$. 

\medskip

We give a brief over-view of the paper. Section 1 contains the definitions of the higher amalgamation properties (from \cite{hrushovski}) and some equivalent conditions to these. Of particular importance is the property $B(N)$ in Definition~\ref{BN} and we will work with this throughout when verifying the higher amalgamation properties. 

Section 2 introduces the key construction of the finite covers (`definable finite covers') which we adjoin to $T$ in order to produce $T^*$. This follows \cite{hrushovski} closely, though we provide more detailed proofs.

Section 3 discusses the notion of splitting of a finite cover and links it to the property $B(N)$. Section 4 provides some details about assembling the definable finite covers into the required $T^*$.

In Section 5, we provide an alternative perspective which avoids mentioning the group-theoretic notion of splitting. Finally in Section 6, we prove the result on theories of Lascar rank 1.

\medskip

\textit{Remarks and Acknowledgements:\/} Some of the material here appeared in some sparsely-circulated notes of the first Author in 2009. Since then, the theory of higher amalgamation functors has been extensively developed by Goodrick, Kim and Kolesnikov, and their approach and results have been used here. The other parts of the material come from the PhD thesis of the Third Author \cite{uea63347}. We thank E. Hrushovski for some useful comments on an earlier version of this and for the proof of Lemma~\ref{oldfinitecover}. The third author thanks UEA for supporting the PhD with a UEA studentship and the KASTEL project by the Federal Ministry of Education and Research(Germany), BMBF 16KIS0521, for funding.

\medskip

\noindent\textit{Notation:\/} Much of our notation (and abuse of notation) is standard. All theories should be assumed complete and first-order unless otherwise stated. We often work with multi-sorted theories $T$ and usually assume that imaginaries are included (so $T = T^{\eq}$). We denote by $\acl^M$ algebraic closure in the structure $M$, suppressing the $M$ where this is clear from the context. Similarly $\dcl^M$ denotes definable closure. `Definable' means `$0$-definable'.

We often work in a monster model $\mathbb{M}$ and elements or sets are then small subsets of this and models are elementary submodels. If $X, Y \subseteq \mathbb{M}$, then $\Aut(X/Y)$ denotes the group of permutations of $X$ which extend to automorphisms of $\mathbb{M}$ fixing all elements of $Y$. We sometimes (particularly in Section 3) consider this as a topological group with the topology of pointwise convergence.

\section{Higher amalgamation properties: definitions and preliminaries}

For this section, we suppose that \({T}\) is a complete, stable \({L}\)-theory and \({\mathbb{M}}\) is a monster model  of \({T}\). For our purposes, we may assume that \({T}\) has quantifier elimination. We will usually assume that \({T = T^{eq}}\): at any point, adding extra imaginary sorts will not cause problems.

\medskip

As \({T}\) is stable, any complete type \({p}\) over an algebraically closed set \({C}\)  (including imaginaries) has, in a definable way, a canonical extension \({p\vert D}\) to a type over any superset \({D \supseteq C}\), namely its unique non-forking extension. So 
for each \({L(C)}\)-formula \({\phi(x,y)}\) there is an \({L(C)}\)-formula \({\psi^p_\phi(y)}\) with the property that  
\[ p \vert D = \{ \phi(x,d) : \phi(x,y) \mbox{ an \({L(C)}\)-formula, }  d\in D \mbox{ and } \models \psi^p_\phi(d)\}\]
is a complete type over \({D}\), and \({p\vert C = p}\).

\medskip

The following definitions are taken from Hrushovski's paper \cite{hrushovski}, with some slight modifications from \cite{kimgoodkol}.

 Let \({\C}\) be the category whose objects are the algebraically closed substructures of \({\MM^{eq}}\) and whose morphisms are elementary maps between these.  Let \({N \in \N}\). Then \({\P(N)^{-}}\) is the power set \({\P(N)}\) of \({[N] = \{0,\ldots, N-1\}}\) without the whole set, thought of as a category where the morphisms $Mor(\P(N)^-)$ are the inclusion maps. Similarly, we also think of \({\P(N)}\) as a category. If \({n \leq N}\) then we denote by \({[N]^n}\) the set of subsets of \({[N]  = \{0,\ldots, N-1\}}\) of size \({n}\).
 
 Thus if \({A : \P(N)^{-} \to \C}\) is a functor, then for each \({s \in \P(N)^{-}}\) we have an algebraically closed subset \({A(s)}\) of \({\M}\). If \({s' \subseteq s}\) we have an elementary map \({A(s' \to s): A(s') \to A(s)}\) and we denote the image of this in \({A(s)}\) by \({A_s(s')}\).

\medskip

An (independent) \textit{\({N}\)-amalgamation problem} for \({T}\) is a functor
\[ A : \P(N)^{-} \to \C\]
where \({A(\emptyset) = \acl(\emptyset)}\) and for any \({s \in \P(N)^{-}}\) the set  \({\{A_s(\{i\}) : i\in s\}}\) is independent over \({\emptyset}\), and \({A(s) = \acl(A_s(i) : i \in s)}\).

\medskip

A \textit{solution} to this is an extension of \({A}\) to a functor 
\[ \bar{A}: \P(N) \to \C\]
on the full power set, satisfying the same conditions (so including the case \({s = [N]}\)).

\medskip

We say that \({T}\) has \textit{\({N}\)-existence}  if every such amalgamation problem has a solution. We say that \({T}\) has  \textit{\({N}\)-uniqueness}  if every such amalgamation problem \({A : \P(N)^- \to \C}\) has at most one solution, up to isomorphism. Explicitly, this means that if \({\bar{A}, \bar{A}' : \P(N) \to \C}\) are solutions to \({A}\), then there is an isomorphism \({\theta: \bar{A}([N]) \to \bar{A}'([N])}\) such that for all \({s \in \P(N)^{-}}\) we have \({\bar{A}'(s \to [N]) = \theta \circ \bar{A}(s\to [N])}\).

\medskip

If \({X \subseteq \MM}\) then we denote by \({T_X}\) the theory obtained by adding constants for the elements of \({X}\). We can then relativise all of the above definitions. So for example, an \({N}\)-amalgamation problem \({A}\) \textit{over \({X}\)} is  an \({N}\)-amalgamation problem for \({T_X}\). Note in particular that in this case we have \({A(\emptyset) = \acl(X)}\) (where algebraic closure is in the sense of \({T}\)). We say that \({T}\) has  \({N}\)-existence/ uniqueness \textit{over} \({X}\), to mean that \({T_X}\) has \({N}\)-existence / uniqueness.

\smallskip

It is perhaps worth noting some examples here. As long as we include imaginaries, then stable theories have \({2}\)-existence and uniqueness over algebraically closed sets: this is precisely the reason for the  introduction of imaginaries into stabiltiy theory. They also have  \({N}\)-existence and uniqueness over a model for all \({N}\). A vector space of infinite dimension over a finite field has \({N}\)-existence and uniqueness for all \({N}\). However, the corresponding projective space does not have \({3}\)-uniqueness (if the field has at least 3 elements). 

\medskip The following terminology is from (\cite{kimgoodkol},
Definition 3.1). In notation such as
\({a_0,\ldots, \hat{a}_i, \ldots, a_N}\), the hat denotes that the
term \({a_i}\) is omitted.

\begin{definition}\label{BN} \rm 
Suppose \({T}\) is stable and \({N \geq 2}\). We say that \({B(N)}\) holds (over \({\acl(\emptyset)}\))
 if whenever \({a_0,\ldots, a_{N-1}}\) are independent over \({\acl(\emptyset)}\), then
\begin{multline*}
\quad\Aut(\acl(a_0,\ldots, a_{N-2})/\bigcup_{i = 0}^{N-2} \acl(a_0\ldots \a_i \ldots a_{N-2} {a_{N-1}})) = \\
\Aut(\acl(a_0,\ldots, a_{N-2})/\bigcup_{i = 0}^{N-2} \acl(a_0\ldots \a_i \ldots a_{N-2})).
\end{multline*}
Equivalently, if \({c \in \acl(a_0,\ldots, a_{N-2})}\) is in the definable closure of
 \[{\bigcup_{i = 0}^{N-2} \acl(a_0\ldots \a_i \ldots a_{N-2} {a_{N-1}})},\]
 then it is in the definable closure of \({\bigcup_{i = 0}^{N-2} \acl(a_0\ldots \a_i \ldots a_{N-2})}\).
\end{definition}

Of course, \({B(2)}\) holds by stability of \({T}\). 
The following is similar to Proposition 3.5 of \cite{hrushovski},
 but works over a fixed set, so requires a different proof.

\begin{proposition} \label{12}
The complete stable \({L}\)-theory \({T}\) has \({N}\)-uniqueness over \({\acl(\emptyset)}\) iff
 \({B(k)}\) holds over \({\acl(\emptyset)}\) for all \({2 \leq k \leq N}\).
\end{proposition}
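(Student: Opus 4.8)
The plan is to induct on $N$. The base case $N=2$ is immediate: $B(2)$ holds by stability and stable theories have $2$-uniqueness over algebraically closed sets. For the inductive step I would isolate the one-step equivalence
\[
N\text{-uniqueness} \iff \big[(N-1)\text{-uniqueness and } B(N)\big];
\]
granting this, the inductive hypothesis rewrites ``$(N-1)$-uniqueness'' as ``$B(k)$ for all $2\le k\le N-1$'', and the statement follows at once. So the whole proof reduces to this one-step equivalence, which I would analyse through automorphisms of the generic configuration.

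Fix independent $a_0,\dots,a_{N-1}$ over $\acl(\emptyset)$ and write $A(s)=\acl(a_i:i\in s)$ for $s\subseteq[N]$; this is the canonical solution of the associated $N$-amalgamation problem. The first thing to record is a reformulation of $B(N)$. Unwinding the definition, the $i$-th term $\acl(a_0\cdots\widehat{a_i}\cdots a_{N-2}a_{N-1})$ equals $A([N]\setminus\{i\})$, so the first union is $\bigcup_{j\neq N-1}A([N]\setminus\{j\})$, while the second union is the union of the codimension-one subfaces of the facet $F:=A([N]\setminus\{N-1\})$. Thus $B(N)$ says precisely that every $\tau\in\Aut(F)$ fixing the boundary of $F$ pointwise already fixes $F\cap\dcl\big(\bigcup_{j\neq N-1}A([N]\setminus\{j\})\big)$ pointwise. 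The second thing to record is that, once two solutions of an amalgamation problem have been identified on $\P(N)^-$, any residual difference is a \emph{twist}: a choice, for one facet $A([N]\setminus\{m\})$, of an automorphism fixing that facet's own boundary, the remaining facets being glued by inclusion. A short computation shows that two such twisted solutions are isomorphic if and only if the twisting automorphism fixes $A([N]\setminus\{m\})\cap\dcl\big(\bigcup_{j\neq m}A([N]\setminus\{j\})\big)$ pointwise, i.e.\ exactly when it is trivial in the sense controlled by $B(N)$.

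For $(\Leftarrow)$ I would assume all $B(k)$, $2\le k\le N$. By the inductive hypothesis the lower $B(k)$'s give $(N-1)$-uniqueness, so two solutions of an $N$-problem can first be matched on $\P(N)^-$; the remaining top discrepancy is a twist, and $B(N)$ forces every such twist to be trivial, yielding an isomorphism. For $(\Rightarrow)$ I would argue the contrapositive, taking the least $k$ with $B(k)$ failing (so $k\ge 3$ and $B(2),\dots,B(k-1)$ hold, whence $(k-1)$-uniqueness by induction). The witness to $\neg B(k)$ is an element $c\in\acl(a_0,\dots,a_{k-2})$ lying in $\dcl$ of the facets containing $a_{k-1}$ but not in $\dcl$ of the boundary of its own facet, together with an automorphism $\sigma$ realising this. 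Within the sub-simplex on $\{0,\dots,k-1\}$ the element $\sigma$ produces, by the twist computation, two solutions of the $k$-subproblem that cannot be matched, and rigidity of the $(k-1)$-skeleton (from $(k-1)$-uniqueness) guarantees the obstruction is genuinely at level $k$.

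The main obstacle is the final step of $(\Rightarrow)$: promoting this level-$k$ non-uniqueness to non-uniqueness of a \emph{single} $N$-amalgamation problem over the independent coordinates $a_k,\dots,a_{N-1}$. The naive attempt — placing the $k$-subproblem on a facet and adjoining generic points — fails, because the two differing $k$-tops then lie inside the boundary of the $N$-problem and yield two non-isomorphic problems rather than two solutions of one. Instead I would build both solutions inside $\MM$ as $\acl(a_0,\dots,a_{N-1})$ equipped with two coherent families of facet-embeddings agreeing on the $(k-1)$-skeleton and on all faces avoiding the critical sub-simplex, and differing by an extension of $\sigma$ on the faces containing it; the containment $c\in\dcl\big(\bigcup_{i<k-1}\acl(a_0\cdots\widehat{a_i}\cdots a_{k-1})\big)$ is what makes the two families non-isomorphic, while $(k-1)$-uniqueness is what makes the extension of $\sigma$ across the higher faces functorial. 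Verifying this coherence — in effect, that the solutions of the $N$-problem form a torsor under a group mapping onto the level-$k$ twisting group, so that nontriviality at level $k$ survives to level $N$ — is the delicate point, and is exactly where working over the fixed base $\acl(\emptyset)$, rather than over a model as in Hrushovski's Proposition~3.5, forces a separate argument.
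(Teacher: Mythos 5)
Your architecture (induction on $N$ through the one-step equivalence ``$N$-uniqueness iff $(N-1)$-uniqueness and $B(N)$'') differs from the paper's, which proves two standalone implications directly from the relative-uniqueness machinery of \cite{kimgoodkol}; and each of your two directions has a concrete gap. In $(\Leftarrow)$, the reduction of the discrepancy between two solutions to a \emph{single} twist is not justified. Two solutions $\bar{A}',\bar{A}''$ of the same problem already agree on $\P(N)^-$ as functors, so there is nothing to ``match'' there; what can differ are all $N$ of the maps $\bar{A}''(u\to[N])$ for $u\in[N]^{N-1}$, which form a compatible family of elementary permutations $\sigma_u$ of the facets $\aa_u$, agreeing on pairwise intersections but not necessarily trivial on any sub-face. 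You cannot normalise all but one of them to the identity: an extension of some $\sigma_{u_1}$ to an automorphism of the monster need not stabilise $\aa_{[N]}$ or the remaining facets, so ``composing away'' the other twists presupposes exactly the gluing you are trying to establish. The paper instead glues the whole family at once via Corollary~\ref{16}, proved by induction on the dimension of the faces using relative $(k,N)$-uniqueness for $2\le k\le N$ (Lemma 4.4 of \cite{kimgoodkol}); this is where \emph{all} of $B(2),\dots,B(N)$ enter, and your ``short computation'' for the if-direction of the twist criterion is precisely that lemma in the single-twist case. The multi-facet case is not addressed in your sketch.

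In $(\Rightarrow)$, your decomposition forces you to show that $N$-uniqueness implies $(N-1)$-uniqueness, equivalently that a failure of $B(k)$ for some $k<N$ propagates to a failure of $N$-uniqueness. You correctly identify this promotion as the delicate point, but you do not carry it out, so your induction does not close. For comparison, the paper's converse treats only the case $k=N$: it defines two functors $A',A''$ that are equal except that $A''(\{0,\dots,N-2\}\to[N])=\sigma$, checks they solve the same problem, and reads $B(N)$ off the resulting isomorphism of solutions (which restricts to $\sigma$ on $\acl(a_0\cdots a_{N-2})$ and to the identity on each $\acl(a_0\cdots\a_i\cdots a_{N-1})$); it does not derive $B(k)$ for $k<N$ from $N$-uniqueness either. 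So the missing step is not one you could have lifted from the paper, but under your chosen decomposition you genuinely need it, whereas the paper's two implications --- that $B(2),\dots,B(N)$ jointly give $N$-uniqueness, and that $N$-uniqueness gives $B(N)$ --- are each proved without it.
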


To prove this, the following notation and terminology from
\cite{kimgoodkol} will be useful.  If \({a_0,\ldots, a_{N-1}}\) are
given and \({s \subseteq \{0,\ldots, N-1\}}\) then
\[{\bar{a}_s = \acl(\{a_i : i \in s\})}.\]  By \({\Aut(\aa_s)}\) we
mean the set of elementary bijections from \({\aa_s}\) to itself.

\begin{definition} \rm Suppose \({2 \leq k \leq N}\). We say that \({T}\) has \textit{relative \({(k,N)}\)-uniqueness} (over \({\acl(\emptyset)}\)) if whenever \({\{a_i : i < N\}}\) are independent (over \({\emptyset}\)) and \({(\sigma_u : u \in [N]^{k-1})}\) are such that \({\sigma_u \in \Aut(\aa_u)}\) and \({\sigma \vert \aa_v = id}\) for all \({v \subset u}\), then \({\bigcup_{u \in [N]^{k-1}} \sigma_u}\) is elementary.
\end{definition}

Note that by stability, \({T}\) has relative \({(2,N)}\)-uniqueness (over \({\acl(\emptyset)}\)).

\begin{lemma} 
(\cite{kimgoodkol}, Lemma 4.4) 
Suppose \({N \geq k \geq 2}\) and  \({T}\) has property \({B(k)}\) (over \({\acl(\emptyset)}\)).  
Then \({T}\) has relative \({(k,N)}\)-uniqueness over \({\acl(\emptyset)}\).
\end{lemma}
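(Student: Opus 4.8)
The plan is to show that the combined map $f := \bigcup_{u\in[N]^{k-1}}\sigma_u$ is elementary, i.e.\ extends to an automorphism of $\MM$. First I would record that $f$ is a well-defined bijection of its domain $D := \bigcup_u \aa_u$: for a family independent over $\acl(\emptyset)$ one has the intersection identity $\aa_u\cap\aa_{u'} = \aa_{u\cap u'}$ (a standard consequence of stability), and since $u\cap u'\subsetneq u$ whenever $u\neq u'$ both have size $k-1$, each $\sigma_u$ restricts to the identity on the overlap $\aa_{u\cap u'}$. Hence the maps $\sigma_u$ agree wherever their domains meet, and it remains to prove that the resulting bijection $f\colon D\to D$ preserves types.

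I would argue by induction on $N\geq k$, the base case $N=k$ being handled directly by the single-map argument below (there $B$ will coincide with the facet configuration to which $B(k)$ applies). For the inductive step $N>k$, the subfamily $(\sigma_u)_{u\subseteq[N-1]}$ satisfies the hypotheses for the index set $[N-1]$, so by relative $(k,N-1)$-uniqueness its union is elementary and extends to an automorphism $\tau$ of $\MM$; replacing each $\sigma_u$ by $\tau^{-1}\circ\sigma_u$ I may assume $\sigma_u=\id$ for every $u\subseteq[N-1]$. Thus every nontrivial map is now indexed by a set $u=w\cup\{N-1\}$ with $w\in[N-1]^{k-2}$, and $f$ is the identity on $\acl(a_0,\dots,a_{N-2})$. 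A commuting-composition argument then reduces the problem to a single nontrivial map: the maps $g_u:=\sigma_u\cup\id_{D\setminus\aa_u}$ pairwise commute (their moved points lie in the disjoint interiors $\aa_u\setminus\bigcup_{v\subsetneq u}\aa_v$) and $f=\prod_u g_u$, so it suffices to show each $g_u$ is elementary; equivalently, I may assume a single $\sigma=\sigma_{u_0}$ is nontrivial.

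For this single map, note that since $k\geq 3$ every generator $a_i$ with $i\in u_0$ already lies in the boundary $\partial := \bigcup_{v\subsetneq u_0}\aa_v$, so $\sigma$ fixes all the generators and moves only elements that are genuinely algebraic over them. The task is to show $\sigma$ is elementary over $B:=\bigcup_{u'\neq u_0}\aa_{u'}$. Using independence of the $a_i$ and the profinite structure of $\Aut(\aa_{u_0}/\{a_i:i\in u_0\})$, this reduces to checking that $\sigma$ fixes pointwise each element of $\aa_{u_0}$ that is pinned down over $B$, namely each element that becomes definable once one adjoins to the facets of $u_0$ one of the remaining vertices. This is exactly the content of $B(k)$: in its $\dcl$-formulation it asserts that any element of $\aa_{u_0}$ definable over the facets-together-with-an-extra-vertex already lies in the definable closure of the facets, hence in $\dcl(\partial)$; and $\sigma$, fixing $\partial$, fixes $\dcl(\partial)$ pointwise. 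The inductive set-up guarantees that the only configurations that arise are of this facets-plus-one-vertex form, so $B(k)$ disposes of all of them.

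The main obstacle, and the sole place where $B(k)$ is needed, is this last passage. One cannot simply invoke stationarity, because here $\acl(\partial)$ collapses onto all of $\aa_{u_0}$ (the boundary already contains every generator $a_i$), so the type of $\aa_{u_0}$ over $\partial$ is very far from stationary; what must instead be controlled is the \emph{definable} closure over the boundary enlarged by one vertex, and property $B(k)$ is precisely tailored to supply this. The remaining ingredients---the intersection identity for algebraic closures, the independence bookkeeping, and the inductive reduction---are routine.
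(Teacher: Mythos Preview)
The paper does not give its own proof of this lemma; it is cited from \cite{kimgoodkol}. So there is nothing to compare against, and the question is simply whether your argument is correct.

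The overall architecture --- well-definedness via the intersection identity, reducing to a single non-trivial $\sigma_{u_0}$ by writing $f$ as a product of the commuting permutations $g_u=\sigma_u\cup\id_{D\setminus\aa_u}$, and then invoking $B(k)$ --- is sound. In particular your check that the composition of chosen extensions $\tilde g_u$ restricts to $f$ on $D$ (because each $\tilde g_u$ fixes every other interior pointwise) is the right way to avoid worrying about whether the extensions themselves commute.

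The gap is in the last paragraph. You assert that after the inductive reduction, ``the only configurations that arise are of this facets-plus-one-vertex form''. This is false when $N>k$. Take $N=4$, $k=3$, $u_0=\{0,3\}$. Then $B=\bigcup_{u'\neq u_0}\aa_{u'}$ contains $\aa_{\{1,2\}}=\acl(a_1,a_2)$, and $\{1,2\}$ is not a facet of $u_0$ together with a single outside vertex (the facets of $u_0$ are $\{0\}$ and $\{3\}$). So the elements of $\aa_{u_0}$ pinned down over $B$ need not be definable over any single ``facets${}+{}$one vertex'' configuration, and the version of $B(k)$ you invoke does not apply as stated. The inductive reduction via $\tau$ does nothing to change this: it only normalises the maps, not the set $B$.

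The fix is simple and in fact makes the induction on $N$ unnecessary. Property $B(k)$ is stated for an arbitrary independent $k$-tuple, and nothing prevents the $k$-th element from being a tuple. Apply $B(k)$ with the ``extra vertex'' taken to be the entire tuple $b=(a_j:j\notin u_0)$: since $\{a_i:i\in u_0\}\cup\{b\}$ is an independent $k$-set, $B(k)$ gives that any $\sigma\in\Aut\bigl(\aa_{u_0}\,/\,\partial_{u_0}\bigr)$ lies in $\Aut\bigl(\aa_{u_0}\,/\,\bigcup_{i\in u_0}\acl((u_0\setminus\{i\})\cup\{a_j:j\notin u_0\})\bigr)$. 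Now every $u'\neq u_0$ in $[N]^{k-1}$ satisfies $u'\subseteq(u_0\setminus\{i\})\cup([N]\setminus u_0)$ for some $i\in u_0\setminus u'$, so $B$ is contained in that union and $\sigma_{u_0}\cup\id_B$ is elementary. This handles every $u_0$ directly, for every $N\geq k$ (including $k=2$), with no induction and no preliminary reduction by $\tau$.
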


\begin{corollary} 
  Suppose \({N \geq 2}\) and \({T}\) has property \({B(\ell)}\) (over
  \({\acl(\emptyset)}\)) for all \({2 \leq \ell \leq N}\).  Then
  \({T}\) has relative \({(k,N)}\)-uniqueness (over
  \({\acl(\emptyset)}\)) for all \({2 \leq k \leq N}\).
\end{corollary}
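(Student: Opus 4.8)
The plan is to obtain the corollary directly from the preceding Lemma, applied separately for each value of $k$ in the stated range. The observation that makes this work is that relative $(k,N)$-uniqueness for a \emph{fixed} $k$ depends, via the Lemma, only on the single property $B(k)$; hence the conclusion ``for all $k$'' is nothing more than the conjunction of the individual conclusions, one for each admissible $k$.

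Concretely, I would fix $k$ with $2 \leq k \leq N$ and check that the hypotheses of the Lemma are satisfied for this $k$. The inequality $N \geq k \geq 2$ holds by the choice of $k$. Moreover, since the corollary assumes $B(\ell)$ over $\acl(\emptyset)$ for every $\ell$ with $2 \leq \ell \leq N$, in particular the instance $B(k)$ holds. The Lemma then yields relative $(k,N)$-uniqueness over $\acl(\emptyset)$ for this particular $k$. As a sanity check at the bottom of the range, the case $k = 2$ recovers the remark made after the definition of relative uniqueness, namely that relative $(2,N)$-uniqueness holds by stability, which is consistent with $B(2)$ holding automatically.

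Since $k$ was an arbitrary element of $\{2, \ldots, N\}$, this establishes relative $(k,N)$-uniqueness over $\acl(\emptyset)$ for all $2 \leq k \leq N$, as required. I do not expect any genuine obstacle here: the substantive content lies entirely in the Lemma, and the corollary merely repackages it by quantifying over $k$. The only point to verify---and it is immediate---is that for each admissible $k$ the single hypothesis $B(k)$ demanded by the Lemma is indeed among the assumed properties $B(\ell)$, $2 \leq \ell \leq N$.
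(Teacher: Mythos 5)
Your proof is correct and matches the paper's (implicit) argument: the corollary is stated there without proof precisely because it follows by applying the preceding Lemma once for each $k$ in the range, exactly as you do. Nothing further is needed.
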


\begin{corollary}
\label{16}\label{stableuniqueness} 
Suppose \({T}\) has property \({B(\ell)}\) (over
\({\acl(\emptyset)}\)) for \[{2 \leq \ell \leq N}.\] Suppose
\({1 \leq r \leq N}\) and \({\{ a_i : i < N\}}\) are independent over
\({\emptyset}\).  Let \[{\{ \sigma_u : u \in [N]^r\}}\] be such that
\({\sigma_u \in \Aut(\aa_u)}\) and \({\sigma_u(x) = \sigma_v(x)}\)
whenever \({x \in \aa_u\cap \aa_v}\). Then
\({\bigcup\{\sigma_u : u \in [N]^r\}}\) is an elementary map.
\end{corollary}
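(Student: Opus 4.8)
The plan is to induct on $r$, treating the base case $r=1$ by stationarity and reducing the inductive step to the previous Corollary (relative $(k,N)$-uniqueness). Write $\sigma = \bigcup\{\sigma_u : u \in [N]^r\}$ for the map on $A := \bigcup_{u \in [N]^r}\aa_u$; the compatibility hypothesis makes $\sigma$ a well-defined function, and since each $\sigma_u \in \Aut(\aa_u)$ we have $\sigma(A)=A$, so the goal is to show $\sigma$ is elementary. The case $r=N$ is trivial, since then $\sigma=\sigma_{[N]}$ is a single elementary map, so I may assume $r<N$.

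For the base case $r=1$ I would proceed by gluing one index at a time. All the $\sigma_{\{i\}}$ agree on $\aa_\emptyset=\acl(\emptyset)$ (because $\aa_{\{i\}}\cap\aa_{\{j\}}=\acl(\emptyset)$ for $i\neq j$, by independence of the $a_i$ over $\acl(\emptyset)$), inducing a common elementary map $\sigma_\emptyset$ of $\acl(\emptyset)$. Having built $g\in\Aut(\MM)$ realising $\sigma_{\{i\}}$ on $\aa_{\{i\}}$ for all $i$ in some finite $I$, i.e. agreeing with $\sigma$ on $\aa_I:=\acl(a_i:i\in I)$, I extend to $I\cup\{j\}$: since $a_j$ is independent from $\aa_I$ over $\acl(\emptyset)$, a generator of $g(\aa_{\{j\}})$ and a generator of the target $\aa_{\{j\}}$ both have, over $\aa_I$, the non-forking extension of one and the same type over $\acl(\emptyset)$. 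Stationarity ($2$-uniqueness over algebraically closed sets) then yields an automorphism fixing $\aa_I$ pointwise which adjusts $g$ so as to realise $\sigma_{\{j\}}$ on $\aa_{\{j\}}$ as well.

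For the inductive step $2\le r<N$, define $\tau_w := \sigma_u\vert\aa_w$ for $w\in[N]^{r-1}$ and any $u\supseteq w$ with $|u|=r$; this is well-defined and again overlap-compatible, so by the induction hypothesis the skeleton map $\tau:=\bigcup_w\tau_w$ is elementary, and I extend it to $g\in\Aut(\MM)$. The key point, valid precisely because $r\ge2$, is that $g(\aa_u)=\aa_u$ for every $u\in[N]^r$: for $i\in u$ pick $w\subseteq u$ with $i\in w$ and $|w|=r-1$, so $g(a_i)=\tau_w(a_i)\in\aa_w\subseteq\aa_u$, giving $g(\aa_u)\subseteq\aa_u$, and the same applied to $g^{-1}$ gives equality. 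Hence $h_u:=g\vert\aa_u\in\Aut(\aa_u)$, and $\rho_u:=\sigma_u\circ h_u^{-1}\in\Aut(\aa_u)$ is the identity on $\aa_w$ for every $w\subseteq u$ with $|w|=r-1$ (both $\sigma_u$ and $h_u$ restrict to $\tau_w$ there), hence on $\aa_v$ for every proper $v\subset u$. Thus $(\rho_u)_{u\in[N]^r}$ satisfies the hypotheses of relative $(r+1,N)$-uniqueness, which holds by the previous Corollary as $B(\ell)$ holds for $2\le\ell\le r+1\le N$; so $\bigcup_u\rho_u$ is elementary. Extending it to $k\in\Aut(\MM)$ and using $g(\aa_u)=\aa_u$, for $x\in\aa_u$ we get $\sigma(x)=\sigma_u(x)=\rho_u(g(x))=k(g(x))$, whence $\sigma=(k\circ g)\vert A$ is elementary.

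I expect the main obstacle to be the identity $g(\aa_u)=\aa_u$ and, dually, the recognition that this fails for $r=1$ — which is exactly why the base case cannot be run through the same reduction and must instead be obtained from stationarity. The remaining work is bookkeeping: checking that the $\tau_w$ and the $\rho_u$ are well-defined and agree on overlaps, and verifying that the local factorisations $\sigma_u=\rho_u\circ h_u$ assemble into the single identity $\sigma=(k\circ g)\vert A$.
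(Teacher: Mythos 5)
Your proof is correct and follows essentially the same route as the paper's: induction on $r$, passing to the restricted system on $[N]^{r-1}$, extending its union to a global automorphism, and then correcting by an application of relative $(r+1,N)$-uniqueness (the paper writes the correction as $\tau^{-1}\sigma_u$ rather than your $\sigma_u\circ h_u^{-1}$, an immaterial difference). You supply two details the paper leaves implicit --- the stationarity argument for the base case $r=1$ and the verification that $g(\aa_u)=\aa_u$ for $r\ge 2$ --- but the structure of the argument is identical.
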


\begin{proof} We prove this by induction on \({r}\), the case
  \({r=1}\) being straightforward. Consider the compatible system of
  elementary maps \[{\{\tau_v : v \in [N]^{r-1}\}}\] given by
  \({\tau_v = \sigma_u \vert \aa_v}\) whenever \({v \subset u}\). By
  inductive hypothesis, the union of these is an elementary map so
  extends to an automorphism \({\tau}\) (of \({\MM}\)). By the
  previous result, relative \({(r+1, N)}\)-uniqueness holds. We can
  apply this to \({\{\tau^{-1}\sigma_u : u \in [N]^r\}}\) to obtain an
  elementary map \({\rho}\) extending all
  \({\tau^{-1}\sigma_u}\). Then \({\tau\rho}\) is an elementary map
  extending all \({\sigma_u}\), as required.
\end{proof}

\begin{proof}(of Propsition \ref{12}) Suppose first that \({B(k)}\)
  holds (over \({\acl(\emptyset)}\)) for all \({k \leq N}\). Let
  \({A : \P(N)^- \to \C}\) be an \({N}\)-amalgamation problem (over
  \({\acl(\emptyset)}\)) and suppose \({A', A'' : \P(N) \to \C}\) are
  solutions. We can assume that \({A'(s \to s')}\) is inclusion and
  (by stationarity) that \[{A'(s) = A''(s) = \aa_s}\] for all
  \({s \subseteq s' \in \P(N)}\). Note that
  \({A''(s\to s') = A'(s \to s') = A(s \to s')}\) if
  \({s \subseteq s' \in \P^-(N)}\), so is inclusion. The only maps we
  need consider under \({A''}\) are therefore \({A''(u \to [N])}\) for
  \({u \in [N]^{N-1}}\) and there exist elementary permutations
  \({\sigma_u}\) on \({A''(u)}\) with
  \({A''(u\to [N])(a) = \sigma_u(a)}\) for all \({a \in
    \aa_u}\). These satisfy the condition of Corollary \ref{16}, so
  there is an automorphism which extends all \({\sigma_u}\) (with
  \({u \in [N]^{N-1}}\)). Let \({\theta}\) be the restriction of this
  to \({A'([N])}\). Then for all \({s \in \P(N)^-}\) we have
  \({A''(s \to [N]) = \theta \circ A'(s\to [N])}\), so \({A', A''}\)
  are isomorphic solutions to \({A}\).

  Conversely, suppose that \({N}\)-uniqueness holds over
  \({\acl(\emptyset)}\). We show that \({B(N)}\) holds over
  \({\acl(\emptyset)}\). Suppose \({a_0,\ldots, a_{N-1}}\) are
  independent over \({\acl(\emptyset}\)
  \[\sigma \in \Aut(\acl(a_0,\ldots, a_{N-2})/\bigcup_{i = 0}^{N-2}
    \acl(a_0\ldots \a_i \ldots a_{N-2}))\] Define functors
  \({A', A'' : \P(N) \to \C}\) by letting \({A'(s) = A''(s) = \aa_s}\)
  for \({s \subseteq [N]}\) and \({A'(s\to t)}\) the inclusion map for
  \({s \subseteq t \subseteq [N]}\). We also let \({A''(s\to t)}\) be
  the inclusion map unless \({s = \{0,\ldots, N-2\}}\) and
  \({t = [N]}\) and we let
  \({A''(\{0,\ldots, N-2\}\to [N])(a) = \sigma(a)}\) for
  \({a \in \aa_{\{0,\ldots, N-2\}}}\). One checks that \({A', A''}\)
  are indeed functors and that they have the same restriction \({A}\)
  to \({\P^-(N)}\). Thus they are solutions to the
  \({N}\)-amalgamation problem \({A}\). Therefore, there is an
  isomorphism \({\theta}\) of \({\aa_{[N]}}\) such that
  \({A''(s\to [N]) = \theta\circ A'(s\to [N])}\) for all
  \({s \subseteq [N]}\). In particular, \({\theta}\) restricts to
  \({\sigma}\) on \({\acl(a_0,\ldots, a_{N-2})}\) and is the identity
  on \[{\acl(a_0,\ldots, \a_i,\ldots, a_{N-2},a_{N-1})}\] for all
  \({i < N-1}\), as required. \end{proof}

\medskip

The proof of the following is essentially that of Lemma 4.1(2) in \cite{hrushovski}.

\begin{lemma}\label{ntonp1}
Suppose \({T}\) has  \({m}\)-uniqueness over \({\acl(\emptyset)}\) for all \({m \leq N}\). 
Then \({T}\) has \({n}\)-existence over \({\acl(\emptyset)}\) for all \({n \leq N+1}\). 
\end{lemma}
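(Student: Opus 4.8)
The plan is to prove the statement separately for each $n \le N+1$: fix such an $n$ and an $n$-amalgamation problem $A : \P(n)^- \to \C$ over $\acl(\emptyset)$. The idea is that $n$-existence is essentially trivial once the whole boundary of the problem has been realised coherently inside a single monster model with all transition maps being inclusions; all of the real work, and the only use of the hypothesis, lies in producing that coherent realisation, where uniqueness at levels $\le n-1 \le N$ is exactly what is needed.

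Concretely, I would first realise the singleton types $A(\{i\})$ by elements $a_0,\dots,a_{n-1}$ chosen independently over $\acl(\emptyset)$, and set $\aa_s = \acl(a_i : i \in s)$ for $s \subseteq [n]$, together with the ``standard'' diagram $A^{\mathrm{std}} : \P(n) \to \C$ given by $s \mapsto \aa_s$ with all maps inclusions. Since the $a_i$ are independent and each $\aa_s$ is their algebraic closure, $A^{\mathrm{std}}$ satisfies the solution conditions. The heart of the proof is to build a natural isomorphism $\eta$ from $A$ to the restriction of $A^{\mathrm{std}}$ to $\P(n)^-$, i.e.\ a family of elementary isomorphisms $\eta_s : A(s) \to \aa_s$ with $\eta_s \circ A(v \to s) = \eta_v$ for all $v \subseteq s \in \P(n)^-$.

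I would construct $\eta$ by induction on $|s|$. The empty and singleton levels are immediate from the choice of the $a_i$. For the inductive step at a set $s$ with $1 < |s| = k \le n-1$: the maps $\eta_v$ already defined on the proper faces $v \subsetneq s$ let me transport the solution $A|_{\P(s)}$ of the boundary $k$-amalgamation problem into a solution whose boundary is literally $A^{\mathrm{std}}|_{\P(s)^-}$ and whose top set is $A(s)$; as $\aa_s$ with inclusions is a second solution of the same problem and $k \le n-1 \le N$, the hypothesis of $k$-uniqueness over $\acl(\emptyset)$ yields a single isomorphism $\theta : A(s) \to \aa_s$ which is simultaneously compatible with the maps from every proper face. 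Taking $\eta_s = \theta$ then gives exactly the required naturality. Having built $\eta$ on all of $\P(n)^-$, I would transport along it to assume $A = A^{\mathrm{std}}|_{\P(n)^-}$ and complete the solution by setting $\bar A([n]) = \aa_{[n]} = \acl(a_0,\dots,a_{n-1})$ with each $\bar A(u \to [n])$ the inclusion $\aa_u \hookrightarrow \aa_{[n]}$; functoriality and the solution conditions are then immediate.

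The main obstacle is the coherence built into the construction of $\eta$. Face by face one obtains isomorphisms $A(s) \cong \aa_s$, but there is no reason for the choices on overlapping faces to agree, and a naive gluing over the $(n-1)$-skeleton would require compatibility of maps indexed by an $n$-element set — more than the hypothesis supplies when $n = N+1$. The point of organising the argument as an induction over faces of size $\le N$ is that $k$-uniqueness (equivalently, via Proposition~\ref{12} and Corollary~\ref{16}, the fact that a compatible system of elementary maps on an $\le N$-element index set is itself elementary) furnishes an isomorphism respecting all subfaces at once, so the separately chosen $\eta_v$ never need to be reconciled after the fact. This is precisely why uniqueness up to level $N$ delivers existence up to level $N+1$.
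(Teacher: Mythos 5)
Your proof is correct. The inductive mechanism at its heart --- fix a concrete target diagram inside the monster model with all transition maps inclusions, then build the face isomorphisms by induction on $|s|$, invoking $|s|$-uniqueness at each stage to get a single map compatible with all proper subfaces at once --- is exactly the engine of the paper's proof, and your observation about why level-$N$ uniqueness suffices for level-$(N+1)$ existence (the top face never needs to be ``reconciled'') is the right one. Where you differ is in how the target diagram is produced and how much work the uniqueness induction has to do. You build the target from scratch by realising the singleton types independently and setting $\aa_s = \acl(a_i : i \in s)$, so every face of size $\geq 2$ requires an application of $k$-uniqueness. The paper instead first solves the $2$-amalgamation problem over $A(1,\ldots,N-1)$ given by the two maximal faces $A(0,\ldots,N-1)$ and $A(1,\ldots,N)$ (this uses only stability), and takes $B(s) = \acl(c_i : i \in s)$ inside the resulting amalgam; the compatible isomorphisms $f_s$ then come for free from $\alpha$ or $\beta$ for every $s$ not containing both $0$ and $N$, and the uniqueness induction is needed only for the faces containing both. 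Your version is more uniform and arguably cleaner to write down; the paper's is more economical, isolating exactly which faces genuinely need the uniqueness hypothesis and making the role of $2$-existence explicit. Both are complete arguments.
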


\begin{proof} It suffices to prove \({(N+1)}\)-existence. 

  Suppose \({A : \P(N+1)^- \to \C}\) is an independent
  \({(N+1)}\)-amal\-gama\-tion problem over \({\acl(\emptyset)}\). There
  is a resulting independent \({2}\)-amal\-gamation problem over
  \({A(1,\ldots, N-1)}\) given by
  \[{A(1,\ldots, N-1) \to A(0,\ldots, N-1)}\] and
  \({A(1,\ldots, N-1) \to A(1, \ldots, N)}\). By stability, this has a
  solution \({A(0,\ldots, N-1) \stackrel{\alpha}{\to} C}\) and
  \({A(1,\ldots, N) \stackrel{\beta}{\to} C}\). Let \({c_i \in C}\) be
  the image of \({A(i)}\) in \({C}\). Define \({B : \P(N+1) \to \C}\)
  by \({B(s) = \acl(c_i : i\in s)}\) (and \({B(f)}\) is inclusion for
  \({f \in Mor(\P(N+1))}\)).

  One checks that \({B}\) is an independent amalgamation over
  \({\acl(\emptyset)}\). It remains to show that there exist
  isomorphisms \({f_s : A(s) \to B(s)}\) compatible with the maps
  \({A(s') \to A(s)}\) and \({B(s') \to B(s)}\), for
  \[{s' \subseteq s \in \P(N+1)^-}.\]

  Unless \({0, N \in s}\), we can take \({f_s}\) to be the composition
  \[{A(s) \to A(1,\ldots, N) \stackrel{\alpha}{\to} C}\] or
  \({A(s) \to A(1,\ldots, N) \stackrel{\beta}{\to} C}\). If
  \({0, N \in s}\) then we define \({f_s}\) by induction on
  \({\vert s \vert}\). The point is that by
  \({\vert s \vert}\)-uniqueness, there is an isomorphism
  \({f_s : A(s) \to B(s)}\) compatible with the (already defined)
  \[{f_{s'} : A(s') \to B(s')}\] (for \({s' \subset
    s}\)).
\end{proof}

\section{Algebraic covers and definable finite covers}  \label{Tp}

We work with multisorted theories / structures and for the moment we do not assume stability. 

\subsection{Preliminaries}
\begin{definition}\rm 

  For complete theories \({T' \supseteq T}\) in many-sorted languages
  \({L' \supseteq L}\) we say that \({T'}\) is an \textit{
    algebraic cover} of \({T}\) if whenever \({M' \models T'}\), then
  the restriction \({M}\) of \({M'}\) to the sorts of \({L}\) is a
  model of \({T}\), \({M}\) is embedded in \({M'}\) (meaning the
  \({0}\)-definable subsets of \({M}\) are the same in the \({L}\) and
  \({L'}\) senses) and stably embedded in \({M'}\) (meaning: the
  parameter-definable subsets of \({M}\) are the same in the \({L}\)
  and \({L'}\) senses) and \({M'}\) is in the algebraic closure of
  finitely many sorts of \({M}\).  We refer to $M$ here as the $T$-part of $M'$ and say that $M'$ is an algebraic cover of $M$.

 We say that $T'$ as above is a \textit{finite cover}
  of \({T}\) if there is a sort \({M_1}\) such that \({M'}\) is in the
  definable closure of \({M \cup M_1}\) and there is a
  \({0}\)-definable function from \({M_1}\) to \({M}\) which is
  (boundedly) finite-to-one.  Any algebraic cover is
  interdefinable with a sequence of finite covers. 

\end{definition}

\begin{lemma}\label{weigeneralfinitecover} 
 Suppose $T'$ is an algebraic cover of $T$ and $M' \models T'$ has $T$-part $M$. Suppose $T$ has weak elimination of imaginaries. Then for every $e \in M'$ we have $e \in \acl(\acl(e)\cap M)$.
\end{lemma}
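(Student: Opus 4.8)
The plan is to locate inside $\acl(e)\cap M$ a real tuple $d$ over which $e$ is already algebraic, and then check that $e$ is indeed algebraic over $d$. Since $T'$ is an algebraic cover of $T$, the element $e$ lies in the algebraic closure of finitely many sorts of $M$, so there is a tuple $a$ from $M$ and an $L$-formula $\phi(x,y)$ with $\models\phi(e,a)$ such that $\phi(x,a)$ has finitely many solutions. Replacing $\phi(x,y)$ by $\phi(x,y)\wedge\exists^{\leq k}x\,\phi(x,y)$ for a suitable $k$, I may assume that $\phi(x,y)$ is algebraic in $x$ of degree at most $k$ for every value of $y$.

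Next I consider the subset $S=\{y\in M:\models\phi(e,y)\}$ of $M$. This is a subset of $M$ defined in $M'$ using the parameter $e$, so by stable embeddedness of $M$ in $M'$ it is definable over parameters from $M$; let $c\in M^{\eq}$ be its canonical parameter. Since $S$ is invariant under any automorphism of $M'$ fixing $e$, and $\sigma$ fixes $c$ exactly when it fixes $S$ setwise, we get $c\in\dcl(e)$. Now I apply weak elimination of imaginaries for $T$: there is a real tuple $d$ from $M$ with $c\in\dcl(d)$ and $d\in\acl(c)$. Because $M$ is embedded and stably embedded in $M'$, these closure relations computed in $M$ agree with those computed in $M'$, so $d\in\acl(c)\subseteq\acl(e)$, giving $d\in\acl(e)\cap M$.

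It remains to show $e\in\acl(d)$, which is the crux. Because $c\in\dcl(d)$ and $S$ is $c$-definable, the set $S$ is definable over $d$. Consider the set $\bigcap_{a'\in S}\phi(M',a')$ of elements of $M'$ solving $\phi(x,a')$ for every $a'\in S$. By the definition of $S$ we have $e\in\phi(M',a')$ for each $a'\in S$, so $e$ lies in this intersection; and since $a\in S$ the intersection is contained in $\phi(M',a)$, hence has at most $k$ elements. Finally it is definable over $d$, as it is cut out by the $d$-formula asserting that $x$ solves $\phi(x,y)$ for all $y$ in the $d$-definable set $S$. Thus $e$ belongs to a finite $d$-definable set, so $e\in\acl(d)\subseteq\acl(\acl(e)\cap M)$, as required.

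I expect the final step to be the main obstacle. Producing $d\in\acl(e)\cap M$ is routine once one has stable embeddedness and weak elimination of imaginaries, but the reverse inclusion $e\in\acl(d)$ cannot be obtained from any single fibre $\phi(x,a')$, since no individual $a'\in S$ need lie in $\acl(e)$. The device of intersecting all fibres over the $d$-definable family $S$ is precisely what forces $e$ into a finite set defined over $d$.
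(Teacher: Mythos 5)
Your proof is correct, and it takes a genuinely different route from the paper's at the crucial step. (One terminological slip: the formula $\phi(x,y)$ witnessing algebraicity of $e$ over $M$ must be an $L'$-formula, since $x$ ranges over a new sort; nothing in your argument depends on it being an $L$-formula.) The paper chooses $\psi(x,a)$ isolating $\tp_{M'}(e/M)$, notes that the equivalence relation $(\forall x)(\psi(x,y)\leftrightarrow \psi(x,z))$ on parameters is $L$-definable by embeddedness, and takes $f\in M^{\eq}$ to be the class of $a$. Isolation is what makes $e$ and $f$ interalgebraic in one stroke: $\psi[M',a]$ is exactly the finite set of realizations of $\tp(e/M)$, so it is fixed setwise by $\Aut(M'/e)$, giving $f\in\dcl(e)$, and conversely $f$ determines this finite set containing $e$, giving $e\in\acl(f)$. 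You instead code the parameter fibre $S=\{y\in M:\models\phi(e,y)\}$, which uses stable embeddedness in its parameter version (your $S$ is $e$-definable, not $0$-definable) but needs no isolating formula; the price is that your $c$ does not by itself pin $e$ into a finite set, which is exactly why you need the intersection-of-fibres argument $e\in\bigcap_{a'\in S}\phi(M',a')\subseteq\phi(M',a)$ to get $e\in\acl(d)$. Both proofs then finish identically, applying weak elimination of imaginaries to the coded imaginary ($f$ or $c$) and using that its real algebraic closure lies in $\acl(e)\cap M$. In short: the paper spends its effort choosing a good (isolating) formula and gets the interalgebraicity for free, whereas you take an arbitrary algebraic formula and recover the missing direction by intersecting fibres; both are complete arguments of essentially the same length.
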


\begin{proof}
As $e$ is algebraic over $M$, there is some $L'(M)$-formula $\psi(x,a)$ with $\psi[M', a]$ finite and containing $e$. We can take $\psi(x,a)$ isolating $\tp_{M'}(e/M)$. The equivalence relation $(\forall x)(\psi(x,y) \leftrightarrow \psi(x,z))$ is an $L'$-definable equivalence relation on $M$ and so is defined by some $L$-formula $\theta(y,z)$. Let $f \in M^{\eq}$ denote the equivalence class containing $a$. Then $f \in \acl(e)$ and $e \in \acl(f)$. Moreover, by wei in $M$, we have $f \in \dcl(\acl(f) \cap M)$ (in $M$). Thus $e \in \acl(\acl(e)\cap M)$, as required.
\end{proof}

\begin{lemma}\label{addingaclemptytoalgebraiccover}
  Let \({{M'}}\) be an algebraic cover of $M$ and suppose that $\acl^{M^{\eq}}(\emptyset) = \dcl^{M^{\eq}}(\emptyset)$. Let $A = \acl^{M'}(\emptyset)$ and consider the expansion $M'_A$ of $M'$ by constants for elements of $A$. 
 Then $M'_A$  is an algebraic cover of \({M}\).
\end{lemma}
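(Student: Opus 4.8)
The plan is to verify directly the four clauses in the definition of algebraic cover for the pair \((M'_A, M)\): that the \(L\)-reduct of \(M'_A\) is a model of \(T\), that \(M\) is embedded and stably embedded in \(M'_A\), and that \(M'_A\) lies in the algebraic closure of finitely many sorts of \(M\). Three of these are essentially free. The \(L\)-reduct of \(M'_A\) is just \(M\), which is a model of \(T\) by hypothesis. Stable embeddedness is immediate: a subset of a power of \(M\) that is definable in \(M'_A\) with parameters is defined by an \(L'\)-formula together with parameters (the named constants being merely particular parameters from \(A\)), so it is parameter-definable in \(M'\), and hence by stable embeddedness of \(M\) in \(M'\) it is parameter-definable in \(L\). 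For the algebraic generation clause, note that \(M'_A\) and \(M'\) have the same underlying sets and the same \(L\)-sorts, while \(\acl^{M'} \subseteq \acl^{M'_A}\) since adjoining constants only enlarges algebraic closure; thus if \(M'\) lies in \(\acl^{M'}\) of finitely many sorts of \(M\), so does \(M'_A\).

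The only real content is therefore that \(M\) remains embedded in \(M'_A\): I must show that every subset of a power of the real sorts of \(M\) which is \(0\)-definable in \(M'_A\) is already \(0\)-definable in \(M\) in the language \(L\). Such a set has the form \(S = \phi(M, \bar{c})\), where \(\phi\) is an \(L'\)-formula and \(\bar{c}\) is a tuple of the named constants, so \(\bar{c}\) is a tuple from \(A = \acl^{M'}(\emptyset)\). Working in \(M'^{\eq}\) and using that \(M\) is stably embedded in \(M'\), the set \(S\) has a canonical parameter \(e \in M^{\eq}\). Since \(S\) is fixed setwise by every automorphism of \(M'\) fixing \(\bar{c}\), we have \(e \in \dcl^{M'^{\eq}}(\bar{c})\), and as \(\bar{c} \in \acl^{M'^{\eq}}(\emptyset)\) this gives \(e \in \acl^{M'^{\eq}}(\emptyset) \cap M^{\eq}\).

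It then remains to see that \(\acl^{M'^{\eq}}(\emptyset) \cap M^{\eq} \subseteq \dcl^{M^{\eq}}(\emptyset)\); for once \(e \in \dcl^{M^{\eq}}(\emptyset)\), the set \(S\), being \(0\)-definable over \(e\) in \(L^{\eq}\), is \(0\)-definable in \(M\), completing the verification of embeddedness. For this I would first record that \(M^{\eq}\) is embedded in \(M'^{\eq}\): a \(0\)-definable (in \(L'^{\eq}\)) subset of an imaginary sort \(M^n/E\) corresponds to an \(E\)-invariant \(0\)-definable (in \(L'\)) subset of \(M^n\), which is \(0\)-definable in \(L\) because \(M\) is embedded in \(M'\). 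Hence a finite \(0\)-definable-in-\(L'^{\eq}\) subset of \(M^{\eq}\) witnessing \(e \in \acl^{M'^{\eq}}(\emptyset)\) is \(0\)-definable in \(L^{\eq}\), so \(e \in \acl^{M^{\eq}}(\emptyset)\); the hypothesis \(\acl^{M^{\eq}}(\emptyset) = \dcl^{M^{\eq}}(\emptyset)\) now yields \(e \in \dcl^{M^{\eq}}(\emptyset)\).

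The main obstacle is precisely this embeddedness clause, and within it the constants that name elements of \(A\) lying in the new (non-\(T\)) sorts: a priori such a constant could define, through an \(L'\)-formula, a subset of \(M\) that is not \(L\)-\(0\)-definable. The canonical-parameter argument is what defuses this, reducing the problem to the behaviour of algebraic and definable closure of the empty set inside \(M^{\eq}\), where the hypothesis applies. Constants naming elements of \(A\) that already lie in \(M^{\eq}\) are handled by the same argument (their canonical parameters being the elements themselves), and are in any case harmless since those elements are \(0\)-definable in \(M\) by hypothesis.
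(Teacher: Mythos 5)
Your proof is correct and follows essentially the same route as the paper's: the only substantive clause is embeddedness of \(M\) in \(M'_A\), which both arguments reduce, via the canonical parameter of the definable set \(S\) in \(M^{\eq}\), to the hypothesis \(\acl^{M^{\eq}}(\emptyset)=\dcl^{M^{\eq}}(\emptyset)\). You merely spell out more explicitly the step (glossed over in the paper) that \(\acl^{M'^{\eq}}(\emptyset)\cap M^{\eq}\subseteq\acl^{M^{\eq}}(\emptyset)\), which is a welcome addition rather than a divergence.
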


\begin{proof}
  We clearly have that \({{M'}_{A}}\) is contained in
  \({\acl(M)}\).  Hence we need to check that \({M}\) is embedded and stably
  embedded in \({{M'}_{A}}\).  It is clear that the
  parameter definable subsets of \({M}\) with parameters from \({{M'_A}}\)
 are also parameter definable in $M'$ and therefore in $M$.  So $M$ is stably embedded in $M'_A$. 
 
 Take some $\emptyset$-definable subset $S$ of \({M'_A}\) which is contained in $M$. So there is $c' \in A$ and an $L'$-formula $\phi(x,y)$ with $S = \phi[M',c']$. We may assume that $c'$ is the canonical parameter for $S$ in $M'^{\eq}$. Let $c$ be the canonical parameter for $S$ in $M^{\eq}$. Then $c, c'$ are interdefinable in $M'^{\eq}$ and so $c \in \acl^{M^\eq}(\emptyset)$. Thus $c \in \dcl^{M^\eq}(\emptyset)$. It follows that $S$ is $\emptyset$-definable in $M$.
\end{proof}

\subsection{The construction}

The following construction is taken from the proof of
(\cite{hrushovski}, 4.3).\label{constructioncover}

\medskip

We work in a monster model of a complete \({L}\)-theory \({T}\). For
our purposes we can assume that \({L}\) is relational and \({T}\) has
quantifier elimination. Suppose \({\theta(x,y,z)}\) is an
\({L}\)-formula with the property that \({\theta(a,b,z)}\) is
algebraic for all \({a,b}\). (Note that if \({\theta_0(x,y,z)}\) is
any \({L}\)-formula and \({a_0, b_0}\) are such that
\({\theta(a_0, b_0, z)}\) is algebraic, realized by \({c_0}\), then
there is an \({L}\)-formula \({\theta(x,y,z)}\) having this property,
and such that \({\models \theta(a_0, b_0, c_0)}\) and
\({\models \theta(x,y,z) \to \theta_0(x,y,z)}\).)

Suppose \({p(x)}\) is a complete type over the monster model which is
definable over \({\emptyset}\). So for each \({L}\)-formula
\({\phi(x,y)}\) there is an \({L}\)-formula \({\psi^p_\phi(y)}\) with
the property that
\[ p \vert D = \{ \phi(x,d) : \phi(x,y) \mbox{ an \({L}\)-formula, }
  d\in D \mbox{ and } \models \psi^p_\phi(d)\}.\] (Note that if
\({T}\) is stable and \({p}\) is a stationary type over
\({\emptyset}\) then \({p\vert D}\) is the restriction of the global
non-forking extension of \({p}\) to \({D}\), so this notation is
consistent with what was used previously.) Fix \({\theta}\) as
above. Let \({M \models T}\) and \({M^*}\) be a sufficiently saturated
elementary extension of \({M}\). Let \({a^*\in M^*}\) realize $p \vert M$ and let 
\[C = \Theta(M, a^*)= \{( b, c^*) : c^* \in M^*, b \in M \mbox{
    and }M^*\models \theta(a^*, b, c^*)\}\] Note that by the
  algebraicity, this does not depend on the choice of
\({M^*}\).  We make the disjoint union \({M \cup C\cup
\{a^*\}}\) into a structure \({M^+ = C(M,
a^*)}\) by giving it the induced structure from \({(M^*, a^*)}\).

More formally we let \({L^+ \supset L}\) be a language with a new sort
\({NC}\), a function symbol \({\pi}\) from \({NC}\) to some
\({L}\)-sorts, a new constant symbol \({*}\), and for each atomic
\({L}\)-formula \({R}\) a new relation symbol \({NR}\). To make
\({M^+}\) into an \({L^+}\)-structure we give \({M}\) its
\({L}\)-structure, take \({NC(M^+) = C}\), define
\({\pi((b,c^*)) = b}\), interpret the new constant symbol as
\({a^*}\), and for a new \({n}\)-ary relation symbol \({NR}\) and
\({e_1, \ldots , e_n \in M^+}\) we write:
\[ M^+ \models NR(e_1,\ldots, e_n) \Leftrightarrow 
M^* \models R(e_1,\ldots, e_n).\]

It is clear that if \({a^*, a^{**} \models p\vert M}\) then
\({C(M, a^*)}\) and \({C(M, a^{**})}\) are isomorphic over \({M}\)
(assume \({M^*}\) is sufficiently homogeneous, and use an automorphism
over \({M}\) which takes \({a^*}\) to \({a^{**}}\)). By construction,
the map \({\pi}\) is finite-to-one.


\medskip

The following lemma show that \({T^+= Th(M^+)}\) does not depend on
the choice of \({M}\) (at least, if \({M}\) is \({\omega}\)-saturated)
and types in \({M^+}\) can be understood in terms of types over
\({a^*}\) in \({M^*}\).  \def\tM{\tilde{M}}

\begin{lemma} \label{types} With this notation, suppose
  \({M, \tM \models T}\) are \({\omega}\)-saturated.
\begin{enumerate} 
\item If \({M \preceq \tM}\) and \({a^* \models p\vert \tM}\) then \({C(M, a^*) \preceq C(\tM, a^*)}\).
\item If \({d, e}\) are (tuples) in \({M^+ = C(M, a^*)}\) then: 
  \[ \tp^{M^+}(d) = \tp^{M^+}(e) \Leftrightarrow \tp^{M^*}(d/a^*) =
    \tp^{M^*}(e/a^*).\]
\end{enumerate}\end{lemma}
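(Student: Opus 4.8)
The plan is to prove the two parts of Lemma~\ref{types} separately, using the fact that $M^+ = C(M,a^*)$ is given the induced structure from $(M^*, a^*)$, so that quantifier-free $L^+$-formulas on $M^+$ correspond exactly to $L$-formulas (with the parameter $a^*$) on $M^*$. The central tool throughout will be the definability of the type $p$ over $\emptyset$: whenever I need to decide whether a formula holds of $a^*$ together with parameters from $M$ (or $\tilde{M}$), I can replace the quantifier over the realisation $a^*$ by the corresponding defining formula $\psi^p_\phi$, which has parameters only in the base model. This is what lets me transfer statements between $M^*$ and the models $M, \tilde{M}$ without actually knowing $a^*$.

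For part~(1), I would first check that $C(M,a^*)$ is literally a substructure of $C(\tilde{M}, a^*)$: the sorts $NC$ and the new relations are defined the same way in both, and since $M \preceq \tilde{M}$ and both use the same $a^*$, the set $\Theta(M,a^*)$ is contained in $\Theta(\tilde{M},a^*)$ and the interpretations agree on $M$. Then I would establish elementarity, most cleanly by a Tarski–Vaught argument. Given an $L^+$-formula $\varphi$ and a witness in $C(\tilde{M},a^*)$ for $(\exists v)\varphi$, I need a witness inside $C(M,a^*)$. After unwinding the definitions of $NC$ and $NR$, any such existential statement becomes an $L$-statement over $a^* \cup \tilde{M}$ asserting the existence of certain elements $c^* \in \tilde{M}^*$ and $b \in \tilde{M}$; using the definability of $p$ over $\emptyset$ together with $M \preceq \tilde{M}$ and $\omega$-saturation of $M$, I can pull the witness down into $M$ (and the corresponding $c^*$ into a suitable extension). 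The $\omega$-saturation hypothesis is exactly what guarantees the relevant finite types over $M$ are realised, so that the finite-to-one fibres of $\pi$ behave uniformly.

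For part~(2), the implication $(\Rightarrow)$ is immediate: an automorphism of $M^*$ fixing $a^*$ sending $d$ to $e$ induces an $L^+$-isomorphism of $M^+$ (it preserves $M$, the fibres $\Theta(M,a^*)$, and all the new relations $NR$ by definition), giving $\tp^{M^+}(d) = \tp^{M^+}(e)$. For $(\Leftarrow)$, I would argue that an $L^+$-formula satisfied by $d$ in $M^+$ translates, via the definition of the induced structure, into an $L$-formula over $a^*$ satisfied by $d$ in $M^*$, and conversely that every $L$-formula over $a^*$ is captured up to equivalence by some $L^+$-formula — here I again use definability of $p$ to absorb the quantifiers ranging over realisations of $p|M$ into formulas with parameters in $M$, i.e.\ into the $NR$-relations and $\pi$. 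Hence the two type-equalities are interderivable.

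The main obstacle I anticipate is the downward transfer in part~(1): showing that a witness living in the saturated extension $\tilde{M}^*$ can be found already in an extension of $M$ compatible with the same $a^*$. The difficulty is that $a^*$ is a single fixed realisation shared by both covers, so I cannot simply take non-forking extensions freely; I must verify that the existential witnesses $(b,c^*)$ can be chosen with $b \in M$ while keeping the $\theta(a^*,b,c^*)$ constraint, and this is where the algebraicity of $\theta(a,b,z)$ (so the fibres are finite) combined with $\omega$-saturation and the definability scheme $\psi^p_\phi$ must be combined carefully. Once the transfer of these finite algebraic fibres is set up correctly, the rest is a routine induction on formula complexity.
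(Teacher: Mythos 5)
The central difficulty in both parts is one your proposal glosses over: an $L^+$-formula \emph{with quantifiers} does not translate into an $L$-formula over $a^*$ evaluated in $M^*$, because quantifiers in $M^+$ range over $M$ and over the fibre set $C$, and $M$ is not a definable subset of $M^*$. Only \emph{quantifier-free} $L^+$-formulas correspond (via the relations $NR$ and quantifier elimination in $T$) to $L$-formulas over $a^*$. So the ``routine induction on formula complexity'' you defer to at the end is exactly where the argument breaks: at the quantifier step there is no way to rewrite $(\exists v\in M^+)\varphi$ as an $L$-statement over $a^*\cup\tilde{M}$, and your ``unwinding of $NC$ and $NR$'' only handles the atomic level. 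The paper avoids this entirely. For (1) it uses the automorphism form of Tarski--Vaught: given a finite tuple $c$ in $C(M,a^*)$ and $e\in\tilde{M}$, it chooses $d\in M$ minimising the (finite) locus of $c$ over $da^*$, proves the two Claims $\tp(c/da^*)\vdash\tp(c/Ma^*)$ and $\tp(c/da^*)\vdash\tp(c/\tilde{M}a^*)$ (this second Claim is where definability of $p$ over $\emptyset$ and $M\preceq\tilde{M}$ do their work), and then assembles an automorphism of $M^*$ fixing $a^*$ and $c$, stabilising $\tilde{M}$, and moving $e$ into $M$. Your witness-pulling step contains none of this and is not justified as stated.

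For (2), the direction you label $(\Rightarrow)$ is in fact the $(\Leftarrow)$ direction of the lemma (you start from an automorphism of $M^*$ over $a^*$, which is the data supplied by equality of $M^*$-types), and the argument is flawed: an automorphism of $M^*$ fixing $a^*$ need \emph{not} stabilise $M$ setwise, so it does not induce any map on $M^+=M\cup C\cup\{a^*\}$, let alone an $L^+$-automorphism. The paper repairs this by choosing an $\omega$-saturated $M''$ with $M\preceq M''\preceq M^*$, $\alpha(M'')=M''$ and $a^*\models p\vert M''$, so that $\alpha$ does induce an automorphism of $C(M'',a^*)$, and then invoking part (1) to get $C(M,a^*)\preceq C(M'',a^*)$ and pull the type equality back down to $M^+$. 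The genuinely immediate direction is the other one: equality of $M^+$-types gives equality of quantifier-free $L^+$-types, which by construction and QE of $T$ gives equality of $L(a^*)$-types in $M^*$. Your proposed syntactic route for that half again founders on the quantifier issue above.
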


\begin{proof} (1) It is clear that \({C(M,a^*)}\) is a substructure of
  \({C(\tM, a^*)}\). It suffices to prove the statement in the case
  where \({\tM}\) is strongly \({\omega}\)-homogeneous, and we may
  assume that \({M^*}\) is a monster model (or at least,
  \({\vert \tM \vert^+}\) - saturated and strongly homogeneous). By a
  variation on the Tarski-Vaught Test (cf. \cite{MR1221741},
  Exercise 2.3.5), it is enough to show that if \({c}\) is a finite
  tuple of elements of \({C(M,a^*)}\) and \({e \in C(\tM, a^*)}\),
  then there is an automorphism \({\alpha}\) of \({C(\tM, a^*)}\) with
  \({\alpha(c) = c}\) and \[{\alpha(e) \in C(M,a^*)}.\] Furthermore,
  we can assume that \({e \in \tM}\), becuase if
  \({\alpha(\pi(e)) \in M}\), then \({\alpha(e) \in C(M,a^*)}\).

Suppose \({d \in M}\) is (a tuple)  such that the locus of \({c}\) over \({da^*}\) (in \({M^*}\)) is as small as possible (equal to \({n}\), say), witnessed by the formula \({\zeta(a^*, z, d)}\).

\smallskip

\noindent\textit{Claim 1:\/} We have  \({\tp(c/da^*) \vdash \tp(c/Ma^*)}\) (types in \({M^*}\)).
\begin{proofclaim}
The claim is that if \({c'}\) has the same type as \({c}\) over \({da^*}\)
 then it has the same type as \({c}\) over \({Ma^*}\). 
If this were not the case, there would be a tuple \({d' \in M}\)
 such that \({c, c'}\) have different types over \({dd'a^*}\). 
But then the locus of \({c}\) over \({dd'a^*}\) is smaller than its locus over \({da^*}\), 
 contradicting the choice of \({d}\). 
\end{proofclaim}

\smallskip

\noindent\textit{Claim 2:\/} We have \({\tp(c/da^*) \vdash \tp(c/\tM a^*)}\) (types in \({M^*}\)).

\begin{proofclaim}
  Suppose not. Then there is \({d_1 \in \tM}\) such that the locus of
  \({c}\) over \({d_1a^*}\) has size \({m < n}\).  Thus there is an
  \({L}\)-formula \({\eta(x,z,y)}\) such that
  \({\models \eta(a^*,c,d_1)}\) and the formulas
  \({(\exists^{=m}z)\eta(x,z,d_1)}\) and
  \[{(\forall z)(\eta(x,z,d_1) \to \zeta(x,z,d))}\] are in
  \({\tp(a^*/\tM)}\).  As this type is definable over \({\emptyset}\)
  and \({M \preceq \tM}\), there is some
  \({d' \models (\exists^{=m}z)\eta(a^*,z,d')\wedge(\forall
    z)(\eta(a^*,z,d') \to \zeta(a^*,z,d))}\) But this contradicts the
    minimality of \({n}\).
\end{proofclaim} 

\smallskip

We now return to the main thread of the proof. As \({M}\) is \({\omega}\)-saturated, there is \({e' \in M}\) with \({\tp(e'/d) = \tp(e/d)}\). By homogeneity of \({\tM}\), there is \({\beta \in \Aut(\tM/d)}\) with \({\beta(e) = e'}\). By definability of \({p}\) over \({\emptyset}\) (and strong homogeneity of \({M^*}\)), \({\beta}\) extends to some \({\gamma \in \Aut(M^*/a^*)}\). As \({\gamma}\) fixes \({d}\) and \({a^*}\), it is clear that \({c}\) and \({\gamma(c)}\) have the same type in \({M^*}\) over \({da^*}\). So by Claim 2, they have the same type over \({\tM a^*}\). By strong homogeneity of \({M^*}\), there is therefore some \({\delta \in \Aut(M^*/\tM a^*)}\) with \({\delta(\gamma(c)) = c}\). Let \({\alpha = \delta\gamma}\). Then \({\alpha \in \Aut(M^*/a^*)}\), \({\alpha\tM = \tM}\), \({\alpha(cd) = cd}\) and \({\alpha(e) = e' \in M}\). Therefore \({\alpha}\) induces an automorphism of \({C(\tM, a^*)}\) which fixes \({cd}\) and sends \({e}\) to an element of \({M}\), as required.

\medskip

(2) The direction \({\Rightarrow}\) is clear. So suppose \({d, e}\) are tuples in \({M^+}\) with the same type over \({a^*}\) in \({M^*}\). We may assume (by (1)) that \({M^*}\) is a monster model and so there is an automorphism \({\alpha}\) of \({M^*}\) which fixes \({a^*}\) and takes \({d}\) to \({e}\). Furthermore, we can assume that there is an \({\omega}\)-saturated \({M'' \preceq M^*}\) with \({M \preceq M''}\),  \({\alpha(M'') = M''}\) and \({a^* \models p \vert M''}\). Thus \({\alpha}\) induces an automorphism of \({M^{++} = C(M'', a^*)}\) which takes \({d}\) to \({e}\), and therefore
\[ \tp^{M^{++}}(d) = \tp^{M^{++}}(e).\]
By (1) we have \({M^+ \preceq M^{++}}\) and so
\[\tp^{M^+}(d) = \tp^{M^{++}}(d) = \tp^{M^{++}}(e) = \tp^{M^{+}}(e).\qedhere\]
\end{proof}

\medskip

\begin{lemma}\label{oldfinitecover} \label{finitecoverold}
With the above notation, \({T^+}\) is a finite cover of \({T}\). 
\end{lemma}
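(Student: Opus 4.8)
The plan is to verify the two requirements in the definition of finite cover directly for the model $M^+ = C(M,a^*)$ and then transfer to arbitrary models of $T^+$. I would take the new sort $M_1 = NC$ (interpreted as $C$) together with the $0$-definable map $\pi$. Since $\theta(x,y,z)$ is algebraic there is a uniform bound on $\vert\{c^* : \models \theta(a^*,b,c^*)\}\vert$, so $\pi$ is boundedly finite-to-one; this is the required function. For the generation statement, note that the sorts of $L^+$ are the $L$-sorts, the sort $NC$, and the singleton sort of the constant $a^*$; as $a^* \in \dcl(\emptyset)$ we get $M^+ = \dcl(M \cup NC)$ immediately. The same two observations give $M^+ \subseteq \acl(M)$, since each element of $C$ lies in a finite $\pi$-fibre over a point of $M$ and $a^* \in \dcl(\emptyset)$.

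It remains to check that $M$ is embedded and stably embedded in $M^+$, which is where Lemma~\ref{types} and the $\emptyset$-definability of $p$ enter. For embeddedness, let $X \subseteq M^n$ be $L^+$-$\emptyset$-definable. By Lemma~\ref{types}(2) the $L^+$-type of a tuple from $M$ is determined by its $L$-type over $a^*$ in $M^*$, so by saturation $X = \{\bar d \in M^n : M^* \models \phi(a^*,\bar d)\}$ for some $L$-formula $\phi$; definability of $p$ over $\emptyset$ then rewrites this as $\psi^p_\phi(M)$, which is $L$-$\emptyset$-definable. Running the same argument with parameters from $M$ shows that every $L^+(M)$-definable subset of $M^n$ is $L(M)$-definable.

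The remaining and main point is that parameters from the new sort $C$ produce no $L^+$-definable subset of $M^n$ beyond those already $L^+(M)$-definable; the difficulty is that such a parameter $(b,c^*)$ carries an element $c^* \in M^* \setminus M$, and I want to avoid invoking any stability of $T$. I would establish this through the automorphism-extension criterion for stable embeddedness: passing to a saturated, homogeneous model it suffices to extend an arbitrary $\tau \in \Aut(M)$ to an automorphism of $M^+$, defined so as to fix $a^*$ and permute $C$ accordingly. The step on which everything turns is that this assignment is elementary: since $\bar b \equiv \tau\bar b$ and $\tp(a^*/M)$ is definable over $\emptyset$, the $\phi$-definitions give $\phi(a^*,\bar b) \leftrightarrow \phi(a^*,\tau\bar b)$ for every $L$-formula $\phi$, whence $\tp(a^* \bar b) = \tp(a^* \tau\bar b)$. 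Thus $\tau \cup \{a^* \mapsto a^*\}$ is $L$-elementary, and because the elements of $C$ are algebraic over $M a^*$ it extends over the finite fibres to an $L$-automorphism fixing $a^*$, hence to an automorphism of $M^+$. Combined with the previous paragraph this yields that $M$ is stably embedded. I expect this extension step — controlling the extra algebraic parameters coming from $C$ using only the $\emptyset$-definability of $p$ — to be the crux of the whole argument.

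Finally, all of these are elementary properties of $T^+$, and by Lemma~\ref{types}(1) the theory $T^+$ does not depend on the choice of $M$, so the conclusions pass from $M^+$ to every model of $T^+$. Hence $T^+$ is a finite cover of $T$.
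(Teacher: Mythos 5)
Your proposal is correct and rests on exactly the same crux as the paper's proof: the $\emptyset$-definability of $p$ lets any automorphism of $M$ extend to an automorphism of the ambient $M^*$ fixing $a^*$, which then stabilises $C=\Theta(M,a^*)$ and so induces an $L^+$-automorphism of $M^+$. The only differences are organisational (the paper deduces embeddedness from saturation plus this automorphism extension, and gets stable embeddedness from embeddedness via the finite-to-one map, whereas you verify embeddedness through Lemma~\ref{types}(2) and use the automorphism criterion for stable embeddedness directly), so this is essentially the paper's argument.
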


\begin{proof} (Hrushovski) It will suffice to prove that if \({M^+ = M \cup C \cup \{a^*\}}\) is a saturated model of \({T^+}\) (with \({C = NC(M^+)}\) and  \({ M \subseteq M^+}\)  its \({L}\)-part), then \begin{enumerate}
\item[(1)] \({M}\) is a saturated model of \({T}\);
\item[(2)] any \({L}\)-automorphism of \({M}\) extends to an \({L^+}\)-automorphism of \({M^+}\). 
\end{enumerate}

Indeed, if \({M}\) is not embedded in \({M^+}\), then by saturation of \({M^+}\) there exist \({b, b' \in M}\) with the same \({L}\)-type and different \({L^+}\)-types, and this contradicts (1) and (2). As there is a finite-to-one map from \({M^+}\) to \({M}\), stable embeddedness follows from embeddedness. 

Note that \({M}\) is a model of \({T}\) and a relativised reduct of \({M^+}\), so (1) is automatic. To prove (2), we `embed' \({M^+}\) into a model of \({T}\). Of course, to make sense of this, we need to change the language. We consider \({M^+}\) as an \({L}\)-structure \({\tM^+}\) by interpreting each atomic relation \({R}\) of \({L}\) as the corresponding new relation \({NR}\). The definition of \({T^+}\) shows that the quantifier free diagram of this is consistent with \({T}\), so \({\tM^+}\) can be considered  as a substructure of a model \({M^*}\) of \({T}\), which we may take to be \({\vert M^+\vert ^+}\)-strongly homogeneous. It is clear that \({M \subseteq \tM^+}\) is a model of \({T}\) and \({a^* \models p\vert M}\) (in \({M^*}\)). Moreover, by the algebraicity, \({NC(M^+) = \Theta(M,  a^*)}\). By definability of \({p\vert M}\), any automorphism of \({M}\) extends to an automorphism of \({M^*}\) which fixes \({a^*}\). This stabilises the set \({M^+}\) and preserves the \({L^+}\)-structure on it, so gives an automorphism of \({M^+}\). This proves (2). \end{proof}

\begin{definition}\label{sec:algebr-covers-defin} \rm
  If \({T}\), \({p}\) and \({\theta}\)  are as above, we denote by
  \({T_{p,\theta}}\)  the \({L^+}\)-theory of
  \({C(M, a^*)}\), where \({M}\) is an \({\omega}\)-saturated model of
  \({T}\) and \({a^*\models p \vert M}\). We refer to this as a
  \textit{definable finite cover} of \({T}\). (We usually ignore the
  adjoined \({a^*}\).)

 \begin{remarks}\label{gendefcov}\rm  We note the following slight extension of this construction. Suppose $T, p, M, M^*$ are as above,  and for $i \leq r$ we have an $L$-formula $\theta_i(x,y,z_i)$ such that $\theta_i(a,b,z_i)$ is algebraic for all $a,b$. We can then let $M^+$ be the induced structure in $(M^*, a^*)$ on:
 \[C(M,a^*) = \bigcup_{i \leq r} \{(b,c^*) : c^* \in M^*, b \in M \mbox{ and } M^*\models \theta_i(a^*, b, c^*)\}.\]
 Exactly as before, this gives a finite cover of $M$. We also refer to this as a definable finite cover of $T$ and denote it by $T_{p,(\theta_i: i\leq m)}$.
 \end{remarks}

\end{definition}

\section{Splitting of finite covers}\label{sec:splitt-finite-covers}

We say that an algebraic cover \({T'}\) of \({T}\) \textit{splits} over \({T}\) if there is an expansion of \({T'}\)  to an algebraic cover \({T''}\) of \({T}\)  which is interdefinable with \({T}\). 

Splitting of \({T'}\) over \({T}\) implies that for any model \({M'}\) of \({T'}\) there is an expansion \({M''}\) of \({M'}\) with 
\[ \Aut(M') = \Aut(M'/M) \rtimes \Aut(M'').\]
Here \({\rtimes}\) denotes semi-direct product. Note that algebraicity means that with the usual automorphism group topology, the \textit{kernel} of the cover, \({\Aut(M'/M)}\),  is a profinite group. As a notational convenience, we shall freely confuse theories with their saturated models.

The following gives the consequence of splitting which we shall need. Algebraic closure is always in the \({eq}\)-sense.

\begin{lemma} \label{splitting} Suppose \({M \subseteq M'}\) is a
  split algebraic cover, \[{X_1, \ldots, X_r \subseteq M}\] and
  \({\acl^M(X_i) = X_i}\) for \({i = 1,\ldots, r}\). Then
\[\Aut(\bigcup_i \acl^{M'}(X_i) / \bigcup_i X_i) = \Aut(\bigcup_i \acl^{M'}(X_i) / M).\]
\end{lemma}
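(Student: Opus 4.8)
The plan is to prove the non-trivial inclusion $\subseteq$, the reverse being immediate since any automorphism fixing $M$ pointwise fixes each $X_i \subseteq M$. Write $A = \bigcup_i \acl^{M'}(X_i)$ and $B = \bigcup_i X_i$, and work in a saturated $M'$. Given $\sigma \in \Aut(A/B)$, extend it to $\alpha \in \Aut(M')$ fixing $B$ pointwise; note $\alpha(A) = A$, since $\alpha$ fixes each $X_i$ and $\acl^{M'}$ is automorphism-invariant, so $\sigma = \alpha|_A$. The goal is to show $\sigma$ in fact extends to an element of $\Aut(M'/M)$.

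Here I use splitting. Fix the expansion $M''$ of $M'$ witnessing the split, so that $\Aut(M') = \Aut(M'/M) \rtimes \Aut(M'')$ with $M''$ interdefinable with its $T$-part $M$. Write $\alpha = \nu h$ with $\nu \in \Aut(M'/M)$ and $h \in \Aut(M'')$. Since $\nu$ fixes $M$ (hence $B$) pointwise and $\alpha$ fixes $B$ pointwise, $h = \nu^{-1}\alpha$ also fixes $B$ pointwise. Because $\nu \in \Aut(M'/M)$, it suffices to show that $h$ fixes $A$ pointwise: then $\sigma = \alpha|_A = \nu h|_A = \nu|_A$, which extends to $\nu \in \Aut(M'/M)$, giving $\sigma \in \Aut(A/M)$.

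Thus everything reduces to the following claim, applied to each $i$: if $X = \acl^M(X) \subseteq M$ then $\acl^{M'}(X) \subseteq \dcl^{M''}(X)$, for then $h \in \Aut(M''/X_i)$ automatically fixes $\acl^{M'}(X_i)$ pointwise. To prove the claim, take $e \in \acl^{M'}(X)$. Since $M''$ expands $M'$ we have $e \in \acl^{M''}(X)$; and since $M''$ is interdefinable with $M$, every element of $M''$ lies in $\dcl^{M''}(M)$, so $e = t(\bar m)$ for some tuple $\bar m \in M$ and some $\emptyset$-definable (in $M''$) function $t$. The fibre $D = t^{-1}(e)$ is a subset of $M$ definable over $e$; as $M$ is stably embedded in $M''$ and $T = T^{\eq}$ has elimination of imaginaries, $D$ has a canonical parameter $c \in M$ with $c \in \dcl^{M''}(e)$ and $e \in \dcl^{M''}(c)$ (the latter because distinct values of $t$ have disjoint fibres, so $e$ is recovered from $D$). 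Then $c \in \acl^{M''}(X) \cap M = \acl^M(X) = X$, using that $M$ is embedded and stably embedded, and hence $e \in \dcl^{M''}(c) \subseteq \dcl^{M''}(X)$, as claimed.

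The main obstacle is the claim in the last paragraph: the content of splitting is precisely that a cover element which is merely algebraic over an $\acl^M$-closed set $X$ in $M'$ becomes \emph{definable} over $X$ in the split expansion $M''$, and this is what forces a complement automorphism fixing $X$ pointwise to fix all of $\acl^{M'}(X)$. The canonical-parameter argument is where stable embeddedness and elimination of imaginaries are essential; one must check carefully that $\acl^{M''}(X) \cap M = \acl^M(X)$ (so the parameter $c$ really lands back in $X$), which uses that automorphisms of $M$ fixing $X$ lift to $M''$, and that restriction $\Aut(M'') \to \Aut(M)$ is the isomorphism supplied by interdefinability. As an alternative to the canonical-parameter computation, one could instead invoke Lemma~\ref{weigeneralfinitecover} applied to the algebraic cover $M''$ of $M$ and then upgrade $\acl$ to $\dcl$ using that $\Aut(M''/M)$ is trivial.
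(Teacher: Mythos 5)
Your proof is correct, and its skeleton matches the paper's: both pass through the semidirect decomposition $\Aut(M') = \Aut(M'/M) \rtimes \Aut(M'')$ supplied by splitting, and both reduce the lemma to showing that an element of the complement $\Aut(M'')$ which fixes $X_i$ pointwise must fix $\acl^{M'}(X_i)$ pointwise. Where you genuinely diverge is in how that last step is proved. The paper argues group-topologically: the restriction map $\Aut(M''/X_1) \to \Aut(\acl^{M'}(X_1)/X_1)$ is a continuous homomorphism into a profinite group, while $\Aut(M''/X_1) \cong \Aut(M/X_1)$ has no proper open subgroup of finite index precisely because $X_1 = \acl^M(X_1)$; hence the image is trivial. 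You instead prove the definability statement $\acl^{M'}(X) \subseteq \dcl^{M''}(X)$: interdefinability of $M''$ with $M$ makes every cover element a $0$-definable function of an $M$-tuple, the fibre through that tuple gets a canonical parameter $c$ in $M$ (using stable embeddedness and $T = T^{\eq}$), and $c$ lands back in $X$ via the compatibility $\acl^{M''}(X) \cap M = \acl^M(X) = X$. Your route makes the moral of the lemma explicit --- splitting upgrades ``algebraic over $X$'' to ``definable over $X$'' in the expansion --- but it costs you elimination of imaginaries and the $\acl$-compatibility check; you correctly flag both, and both do hold here, since the restriction $\Aut(M'') \to \Aut(M)$ being an isomorphism lets automorphisms of $M$ over $X$ lift to $M''$, which gives the orbit argument needed for the compatibility. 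The paper's profinite argument sidesteps all of this bookkeeping and uses only that $X_1$ is algebraically closed in $M$, so it is shorter and needs fewer hypotheses made explicit; yours is more elementary in the sense of avoiding topological group theory altogether. Both are valid proofs.
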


\begin{proof} The containment \({\supseteq}\) is clear. For the converse, it is enough to prove the case \({r = 1}\). So suppose \({h \in \Aut(M' / X_1)}\).  By assumption, there is an expansion \({M''}\) of \({M'}\) such that 
  \[\Aut(M') = \Aut(M'/M) \rtimes \Aut(M'').\]Thus we can write
  (uniquely) \({h = k.g}\) where \({k \in \Aut(M'/M)}\) and
  \[{g \in \Aut(M'')}.\] So clearly \({g \in \Aut(M''/X_1)}.\)

  Consider the restriction map from \({\Aut(M''/ X_1)}\) to
  \[{\Aut(\acl^{M'}(X_1)/ X_1)}.\] We claim that this has trivial
  image. To see this, note that this is a continous map and the range
  is a profinite group. Also, restriction to the sorts of \({M}\)
  gives an isomorphism from \({\Aut(M'')}\) to \({\Aut(M)}\). As
  \({X_1}\) is algebraically closed in \({M}\), it follows that
  \({\Aut(M/ X_1)}\), and therefore \({\Aut(M''/ X_1)}\), has no
  proper open subgroup of finite index. Thus the only possible
  continuous image of \({\Aut(M''/ X_1)}\) inside a profinite group is
  the trivial group. So \({g}\) fixes each element of
  \({\acl^{M'}(X_1)}\) and therefore \({h}\) and \({k}\) agree on
  \({\acl^{M'}(X_1)}\). As \({k \in \Aut(M'/ M)}\) this proves the
  lemma. \end{proof}

%

\medskip

For the rest of the section, \({T}\) is a complete \({L}\)-theory (with \({T=T^{\eq}}\)). We work in a monster model \({\mathbb{M}}\) of \({T}\) and other models are elementary substructures of this. The following is the main point. 

\begin{lemma} \label{usesplitting} Suppose that every definable finite
  cover of \({T}\) splits over \({T}\). Let \({p}\) be a global type
  definable over \({\emptyset}\), let \({M \models T}\) be
  \({\omega}\)-saturated and \({a^* \models p \vert M}\). Suppose
  \({b_0, b_1,\ldots, b_r \in M}\) and \({c \in \acl(a^*b_0)}\),
  \({e_i \in \acl(a^*b_i)}\) are such that $c \in \dcl(a^*e_1\ldots e_r M)$. Then 
  \[{c \in \dcl(a^*e_1\ldots e_r B_0B_1\ldots B_r)}\] where
  \({B_i = \acl(b_i)}\).
\end{lemma}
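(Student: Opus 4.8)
The plan is to encode $c$ and the $e_i$ as elements of a single definable finite cover of $T$, apply the splitting hypothesis through Lemma~\ref{splitting}, and then read off the conclusion.

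First I would set up the cover. Since $c\in\acl(a^*b_0)$ and $e_i\in\acl(a^*b_i)$, choose $L$-formulas $\theta_0,\dots,\theta_r$ with $\theta_i(a,b,z_i)$ algebraic for all $a,b$ and with $\models\theta_0(a^*,b_0,c)$ and $\models\theta_i(a^*,b_i,e_i)$ for $1\le i\le r$; as recalled in Section~\ref{Tp}, an arbitrary algebraic formula may be shrunk to one with these properties. Applying the construction of Remarks~\ref{gendefcov} to $p$ and $(\theta_i:i\le r)$ produces a definable finite cover $M^+=C(M,a^*)$ in which $a^*$ is named by a constant and in which $\gamma=(b_0,c)$ and $\epsilon_i=(b_i,e_i)$ are elements of the new sort with $\pi(\gamma)=b_0$ and $\pi(\epsilon_i)=b_i$. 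Because $a^*$ is a constant and each $\theta_i$ is algebraic, we have $\gamma\in\acl^{M^+}(b_0)\subseteq\acl^{M^+}(B_0)$ and $\epsilon_i\in\acl^{M^+}(B_i)$; and once the base point $b_0\in B_0$ is fixed, fixing $\gamma$ is the same as fixing the value $c$, and similarly for $\epsilon_i$ and $e_i$. By Lemma~\ref{oldfinitecover} (in the form of Remarks~\ref{gendefcov}), $T^+=\Th(M^+)$ is a finite cover of $T$, so by hypothesis it splits over $T$.

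Next I would apply Lemma~\ref{splitting}. Put $U=\bigcup_{i=0}^r\acl^{M^+}(B_i)$, so that $\gamma,\epsilon_1,\dots,\epsilon_r\in U$, and note $\acl^{M}(B_i)=B_i$. The splitting of $T^+$ together with Lemma~\ref{splitting} (with $X_i=B_i$) gives
\[
\Aut\bigl(U/{\textstyle\bigcup_i B_i}\bigr)=\Aut(U/M).
\]
Under the identification of the kernel $\Aut(M^+/M)$ with the automorphisms of $\mathbb{M}$ fixing $a^*$ and $M$ pointwise, the hypothesis $c\in\dcl(a^*e_1\cdots e_rM)$ says precisely that every element of $\Aut(M^+/M)$ fixing all the $\epsilon_i$ fixes $\gamma$; that is, $\gamma\in\dcl^{M^+}(\epsilon_1\cdots\epsilon_rM)$. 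Now take any $g\in\Aut(M^+/\epsilon_1\cdots\epsilon_r\bigcup_iB_i)$. Its restriction to $U$ lies in $\Aut(U/\bigcup_iB_i)=\Aut(U/M)$, so it agrees on $U$ with some $k\in\Aut(M^+/M)$; this $k$ fixes every $\epsilon_i\in U$, hence fixes $\gamma$, and therefore $g(\gamma)=k(\gamma)=\gamma$. Thus
\[
\gamma\in\dcl^{M^+}\bigl(\epsilon_1\cdots\epsilon_r{\textstyle\bigcup_iB_i}\bigr).
\]

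Finally I would transfer this back to $\mathbb{M}$ to obtain $c\in\dcl(a^*e_1\cdots e_rB_0\cdots B_r)$, and this is the step I expect to be the main obstacle. An automorphism of the finite cover $M^+$ corresponds to an automorphism of $\mathbb{M}$ that fixes $a^*$ \emph{and stabilises the submodel $M$}. Since the base $\bigcup_iB_i$ of the desired conclusion is a proper part of $M$, the displayed cover-level statement a priori controls only those automorphisms of $\mathbb{M}$ fixing $a^*e_1\cdots e_r\bigcup_iB_i$ that additionally preserve $M$, whereas membership in $\dcl(a^*e_1\cdots e_rB_0\cdots B_r)$ must be tested against all of them. (By contrast the hypothesis translated cleanly, exactly because its base contained all of $M$.) To close the gap one must show that the automorphisms of $\mathbb{M}$ that move $M$ still cannot move $c$: given $\sigma$ fixing $a^*e_1\cdots e_r\bigcup_iB_i$, the conjugate $\sigma(c)$ again realises $\theta_0(a^*,b_0,z)$ and so lies in the finite realisation set of this algebraic formula, and I would control it using $c\in\acl(a^*b_0)$ together with the independence of $a^*$ from $M$ over $\emptyset$ coming from definability of $p$, the $\omega$-saturation and homogeneity of $M$, and stationarity of types over algebraically closed sets. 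This passage from cover-level definable closure, which sees only the $M$-respecting automorphisms, to genuine definable closure in $\mathbb{M}$ is the delicate heart of the argument.
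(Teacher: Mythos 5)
Your set-up, your application of Lemma~\ref{splitting}, and your forward translation of the hypothesis all match the paper's proof, and the argument that $\gamma\in\dcl^{M^+}(\epsilon_1\cdots\epsilon_r\bigcup_i B_i)$ is correct. The genuine gap is exactly where you flag it: you do not carry out the transfer back from $\dcl^{M^+}$ to $\dcl$ in $\mathbb{M}$, and the sketch you offer for it (independence of $a^*$ from $M$, stationarity of types over algebraically closed sets) is not the right tool --- note that Section~\ref{sec:splitt-finite-covers} does not even assume $T$ is stable, so stationarity is unavailable. The missing ingredient is Lemma~\ref{types}(2), which is a statement about \emph{types}, not about automorphisms of $M^+$, and which dissolves the obstacle you describe. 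Concretely: by finite character pick a finite tuple $\bar b$ from $B_0\cdots B_r$ with $\gamma\in\dcl^{M^+}(\epsilon_1\cdots\epsilon_r\bar b)$, and let $c''$ be any realisation of $\tp^{M^*}(c/a^*e_1\cdots e_r\bar b\, b_0\cdots b_r)$. Since $c$ satisfies the algebraic formula $\theta_0(a^*,b_0,z)$ and $a^*,b_0$ are among the parameters, so does $c''$; hence $(b_0,c'')$ is again an element of the new sort of $M^+$. The map fixing $a^*$ and the parameters and sending $c\mapsto c''$ is elementary over $a^*$, so by Lemma~\ref{types}(2) the tuples $(b_0,c'')\epsilon_1\cdots\epsilon_r\bar b$ and $\gamma\epsilon_1\cdots\epsilon_r\bar b$ have the same type in $M^+$; the cover-level $\dcl$ statement then forces $(b_0,c'')=\gamma$, i.e.\ $c''=c$. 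Thus $c\in\dcl(a^*e_1\cdots e_rB_0\cdots B_r)$ in $\mathbb{M}$. Your worry about automorphisms of $\mathbb{M}$ that move $M$ never arises, because membership in $\dcl$ is tested against realisations of a type over the stated parameter set, and every such realisation is forced back into the cover by the algebraicity of $\theta_0$.
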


\begin{proof} For \({i = 1,\ldots, r}\), let \({\theta_i(x,y,z)}\)
  witness the algebraicity of \({e_i}\) over \({a^*, b_i}\) and let
  \({\theta_0(x,y,z)}\) witness algebraicity of \({c}\) over
  \({a^*b_0}\). Let
  \[{\theta(x,y,z) = \bigvee_{i= 0}^r \theta_i(x,y,z)}\] (there is a
  slight complication here in that we ought to assume that elements to
  be represented by the same variable come from the same sort, but for
  our purposes this can be arranged). Performing the construction of
  the previous section, we obtain a definable finite cover
  \({\pi : M^+ \to M}\) of \({M}\) with
  \({(b_0, c), (b_1,e_1), \ldots, (b_r, e_r) \in M^+}\). By Lemma
  \ref{types}
  \[(b_0, c) \in \dcl^{M^+}((b_1,e_1), \ldots, (b_r, e_r) M),\]
  therefore by splitting and Lemma \ref{splitting},
  \[(b_0, c) \in \dcl^{M^+}((b_1,e_1), \ldots, (b_r, e_r) B_0B_1\ldots
    B_r). \] A further application of Lemma \ref{types} then gives
  that \[c \in \dcl(a^*e_1\ldots e_r B_0B_1\ldots B_r),\] as required.
\end{proof}


\begin{proposition} \label{mainpoint} Suppose \({n}\) is an integer
  and \({n \geq 3}\). Suppose \[{p_1,\ldots, p_n}\] are global types
  which are definable over \({\emptyset}\) and \({a_1,\ldots, a_n}\)
  are such that
  \({ a_i \models p_i \vert a_1 \ldots \hat{a}_i \ldots a_n}\). Let
  \[ c\in \acl(a_1\ldots a_{n-1}) \cap
    \dcl(\bigcup_{i=1}^{n-1}\acl(a_1\ldots \hat{a}_i\ldots a_n)).\]
  Suppose further that every definable finite cover of \({T}\) splits
  over \({T}\).

Then

\[ c \in \dcl(\bigcup_{i=1}^{n-1}\acl(a_1\ldots \hat{a}_i\ldots a_{n-1})).\]
\end{proposition}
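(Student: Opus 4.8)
The plan is to single out $a_n$ and to run the argument of Lemma~\ref{usesplitting}, adapted to the fact that here $c$ lies in the base rather than in a fibre. I would fix an $\omega$-saturated $M\models T$ containing $a_1,\dots,a_{n-1}$ with $a_n\models p_n\vert M$, and set $X_i=\acl(a_1\dots\hat{a}_i\dots a_{n-1})$ for $1\le i\le n-1$, so that $\acl(a_1\dots\hat{a}_i\dots a_n)=\acl(a_n X_i)$. Note $c\in\acl(a_1\dots a_{n-1})\subseteq M$, and by hypothesis $c\in\dcl^{M^*}(a_n\,e_1\cdots e_{n-1})$ for finite tuples $e_i\in\acl(a_n X_i)$. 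The idea is to use splitting to replace the data $e_i$, which is only algebraic over $a_n X_i$, by data algebraic over $X_i$ alone, and then to delete the single generic parameter $a_n$.

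First I would choose $\theta_i$ witnessing the algebraicity of $e_i$ over $a_n$ and a tuple $b_i$ enumerating $X_i$, and form the definable finite cover $M^+$ of $M$ attached to $p_n$ and the $\theta_i$ as in Remark~\ref{gendefcov}. Inside $M^+$ the fibre elements $\hat e_i=(b_i,e_i)$ satisfy $\hat e_i\in\acl^{M^+}(b_i)=\acl^{M^+}(X_i)$, since the projection $M^+\to M$ is finite-to-one and sends $\hat e_i$ to $b_i$. Using Lemma~\ref{types}, together with the fact that $a_n$ is a constant of $M^+$, the relation $c\in\dcl^{M^*}(a_n\,e_1\cdots e_{n-1})$ transfers to $c\in\dcl^{M^+}(\hat e_1,\dots,\hat e_{n-1})$, hence
\[ c\in\dcl^{M^+}\Big(\textstyle\bigcup_{i}\acl^{M^+}(X_i)\Big). \]
Since every definable finite cover of $T$ splits, $M\subseteq M^+$ is split and Lemma~\ref{splitting} applies to the algebraically closed sets $X_i$: an automorphism of $M^+$ fixing $\bigcup_i X_i$ pointwise agrees on $\bigcup_i\acl^{M^+}(X_i)$ with one fixing $M$ pointwise. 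As $c$ is definable over $\bigcup_i\acl^{M^+}(X_i)$ and lies in $M$, this forces $c\in\dcl^{M^+}(\bigcup_i X_i)$. Transferring back through Lemma~\ref{types} yields
\[ c\in\dcl\Big(\textstyle\bigcup_{i=1}^{n-1}\acl(a_1\dots\hat{a}_i\dots a_{n-1}),\,a_n\Big). \]

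It then remains to delete $a_n$. Write $c$ as the unique solution of $\phi(x,a_n,\bar b)$ for some finite $\bar b\in\bigcup_i X_i$. Because $c\in M$ and $a_n\models p_n\vert M$, we have $a_n\models p_n\vert\big(\textstyle\bigcup_i X_i\cup\{c\}\big)$, so $\phi(c,y,\bar b)$ belongs to $p_n$ and the associated $p_n$-definition $\psi(x,\bar b)$ (a formula over $\bigcup_i X_i$) holds of $c$. For any rival $c'$ with $\models\psi(c',\bar b)$, taking $a'\models p_n$ over $(\bigcup_i X_i)\cup\{c,c'\}$ gives $\models\phi(c,a',\bar b)\wedge\phi(c',a',\bar b)$, while the uniqueness clause $\exists^{=1}x\,\phi(x,y,\bar b)$ lies in $p_n\vert\bigcup_i X_i$ and so holds of $a'$; hence $c'=c$. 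Thus $c$ is the unique realisation of $\psi(x,\bar b)$ and $c\in\dcl(\bigcup_i X_i)$, which is the assertion.

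I expect the main obstacle to be the bookkeeping in passing to and from the cover $M^+$. The crucial and slightly delicate point is that, although $e_i$ is only algebraic over $a_n X_i$ in $M^*$, the fibre element $\hat e_i$ becomes algebraic over $X_i$ inside $M^+$; it is exactly this that lets Lemma~\ref{splitting}, applied to these $X_i$, localise the defining data for $c$ to $\bigcup_i X_i$. Verifying through Lemma~\ref{types} that the defining relation for $c$ transfers correctly in both directions, and checking that the hypotheses of Lemma~\ref{splitting} (in particular $\acl^M(X_i)=X_i$ and splitting of this specific cover) are met, is where the care is needed; the reappearance of $a_n$ after transferring back is harmless and is removed by the definability argument above.
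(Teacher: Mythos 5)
Your argument is correct, but it is organised quite differently from the proof in the paper, so it is worth recording the comparison. The paper proves the statement by induction on $i=0,\dots,n-1$: at step $i+1$ it realises $p_{i+1}$ over a model $M$ containing \emph{all} the other $a_j$ (including $a_n$), treats $c$ as a fibre element over that generic, and invokes Lemma~\ref{usesplitting} to replace $M$ by the algebraic closures of the relevant finite tuples, thereby stripping $a_n$ out of the single term indexed by $i+1$; this is repeated $n-1$ times and uses splitting of covers built from each of $p_1,\dots,p_{n-1}$. You instead make a single pass with $a_n$ as the distinguished generic over a model containing only $a_1,\dots,a_{n-1}$, keep $c$ in the base, handle all $n-1$ terms simultaneously with one application of Lemma~\ref{splitting} to the sets $X_i$, and then delete $a_n$ at the end by the standard definable-type argument ($c\in\dcl(a_nB)$ with $a_n\models p_n\vert Bc$ and $p_n$ definable over $\emptyset$ forces $c\in\dcl(B)$). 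This is shorter, avoids the inductive bookkeeping, and in fact establishes a slightly stronger statement: only splitting of the covers $T_{p_n,\theta}$ is used, and the hypotheses on $p_1,\dots,p_{n-1}$ are not needed at all; the paper's route stays closer to Hrushovski's original and reuses Lemma~\ref{usesplitting} verbatim. A few points need tightening: $b_i$ must be a \emph{finite} subtuple of $X_i$ with $e_i\in\acl(a_nb_i)$ (so $\acl^{M^+}(b_i)\subseteq\acl^{M^+}(X_i)$, not equality); the reduction to $a_n\models p_n\vert M$ requires transporting $c$ by an automorphism over $a_1\dots a_{n-1}$ (harmless, since each $X_i$ is fixed setwise); and in transferring $c\in\dcl^{M^+}(\bigcup_iX_i)$ back to $c\in\dcl^{M^*}(a_n\bigcup_iX_i)$ you should include $a_1,\dots,a_{n-1}$ among the finite parameter tuple, so that any realisation of $\tp^{M^*}(c/a_n\bar v)$ lies in $\acl(a_1\dots a_{n-1})\subseteq M\subseteq M^+$ and Lemma~\ref{types}(2) can be applied -- this is precisely the delicate point you flag, and it is where the paper's device of packaging $c$ as a fibre element $(b_0,c)$ does the same job.
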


\begin{proof} Let \({M}\) be an \({\omega}\)-saturated model of \({T}\) which contains the \({a_i}\). For \({0 \leq i \leq n-1}\) we prove by induction on \({i}\) that
\begin{equation}\label{indhyp} c \in \dcl(\bigcup_{j \leq i} \acl(a_1\ldots \hat{a}_j\ldots a_{n-1}) \cup \bigcup_{i <j \leq n -1}\acl(a_1\ldots \hat{a}_j\ldots a_n))\end{equation}
with the interpretation that the case \({i = 0}\) is what we are given, and the case \({i = n-1}\) is what is required. So suppose (\ref{indhyp}) holds for some \({i < n-1}\) and deduce it for \({i+1}\).

Let  \({a^* \models p\vert M}\). As \({\tp(a_1\ldots a_{i+1}\ldots a_n) = \tp(a_1\ldots a^*\ldots a_n)}\) we can 
find \({c^*}\) with \({\tp(c^*a_1\ldots a^* \ldots a_{n}) = \tp(c a_1 \ldots a_{i+1} \ldots a_n)}\). By assumption (\ref{indhyp}) 
\({c^*}\) is in the definable closure of
\begin{multline*}\bigcup_{j\leq i}\acl(a^*a_1\ldots \hat{a}_j \ldots a_i a_{i+2}\ldots a_{n-1}) \cup \acl(a_1\ldots a_i a_{i+2}\ldots a_n)\\ \cup  \bigcup_{i+2 \leq j < n} \acl(a^*a_1\ldots a_i a_{i+2}\ldots \hat{a}_j \ldots a_n).\end{multline*}

In particular \({c^*}\) is in the definable closure of

\begin{multline*}\bigcup_{j\leq i}\acl(a^*a_1\ldots \hat{a}_j \ldots a_i a_{i+2}\ldots a_{n-1}) \cup M\\ \cup  \bigcup_{i+2 \leq j < n} \acl(a^*a_1\ldots a_i a_{i+2}\ldots \hat{a}_j \ldots a_n).\end{multline*}

Note that \({c^* \in \acl(a_1\ldots a_i a^*a_{i+2}\ldots a_{n-1})}\). So by Lemma \ref{usesplitting} it follows that  \({c^*}\) is in the definable closure  of

\begin{multline*}\bigcup_{j\leq i}\acl(a^*a_1\ldots \hat{a}_j \ldots a_i a_{i+2}\ldots a_{n-1}) \cup \acl(a_1\ldots a_ia^* a_{i+2}\ldots a_{n-1})\\ \cup  \bigcup_{i+2 \leq j < n} \acl(a^*a_1\ldots a_i a_{i+2}\ldots \hat{a}_j \ldots a_n).\end{multline*}

As \({\tp(c^*a_1\ldots a^* \ldots a_{n}) = \tp(c a_1 \ldots a_{i+1} \ldots a_n)}\), it follows that \({c}\) is in the definable closure of 

\begin{multline*}\bigcup_{j\leq i}\acl(a_1\ldots \hat{a}_j \ldots a_i a_{i+1} a_{i+2}\ldots a_{n-1}) \cup \acl(a_1\ldots a_i a_{i+2}\ldots a_{n-1})\\ \cup  \bigcup_{i+2 \leq j < n} \acl(a_1\ldots a_i a_{i+1}a_{i+2}\ldots \hat{a}_j \ldots a_n)\end{multline*}
which is the required inductive step. \end{proof}

\begin{remarks}\rm For the above proof to work, we need only assume that each definable cover of the form \({T_{p_i,  \theta}}\) splits over \({T}\) (for \({i = 1,\ldots n}\)). Also note that in the case \({n = 3}\) the proof of (\cite{hrushovski}, Proposition 3.3) shows that the definable finite covers \({T_{p_i, \theta}}\) arising in the proof have finite relative automorphism group. Thus for the case \({n=3}\), the hypothesis can be further weakened to assuming that each definable cover of the form \({T_{p_i, \theta}}\) and which has finite relative automorphism group splits over \({T}\).\end{remarks}

\begin{theorem} \label{35} Suppose \({T}\) is stable and
  \({\acl(\emptyset) = \dcl(\emptyset)}\) in \({T}\). Suppose every
  definable finite cover of \({T}\) splits over \({T}\). Then \({T}\)
  has \({N}\)-existence and \({N}\)-uniqueness for all \({N}\).
\end{theorem}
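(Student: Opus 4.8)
The plan is to reduce everything to the property $B(N)$ of Definition~\ref{BN}. Recall from Proposition~\ref{12} that $T$ has $N$-uniqueness (over $\acl(\emptyset)$, which is the default base throughout) if and only if $B(k)$ holds for all $2 \leq k \leq N$, and from Lemma~\ref{ntonp1} that $m$-uniqueness for all $m \leq N$ yields $n$-existence for all $n \leq N+1$. Hence, once I show that $B(N)$ holds for every $N \geq 2$, $N$-uniqueness follows for all $N$ from Proposition~\ref{12}, and $n$-existence for all $n$ then follows from Lemma~\ref{ntonp1}. So the entire content of the theorem is the verification of all the $B(N)$.

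The case $B(2)$ is immediate from stability, as already noted. For $N \geq 3$ my claim is that $B(N)$ is exactly a special case of Proposition~\ref{mainpoint}. First I would record that, because $\acl(\emptyset) = \dcl(\emptyset)$ in $T^{\eq}$, every type over $\emptyset$ is stationary and its global nonforking extension is definable over $\emptyset$. Now, given $a_0, \ldots, a_{N-1}$ independent over $\acl(\emptyset)$ and $c \in \acl(a_0, \ldots, a_{N-2})$ lying in $\dcl\bigl(\bigcup_{i=0}^{N-2} \acl(a_0 \ldots \a_i \ldots a_{N-2} a_{N-1})\bigr)$, I would relabel the tuple as $a_1, \ldots, a_N$ and take $p_i$ to be the global nonforking extension of $\tp(a_i/\emptyset)$. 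By the previous sentence each $p_i$ is definable over $\emptyset$, and independence of the $a_i$ is precisely the condition $a_i \models p_i \vert a_1 \ldots \a_i \ldots a_N$. Under this relabelling the hypothesis on $c$ reads $c \in \acl(a_1 \ldots a_{N-1}) \cap \dcl\bigl(\bigcup_{i=1}^{N-1} \acl(a_1 \ldots \a_i \ldots a_N)\bigr)$, which is exactly the hypothesis of Proposition~\ref{mainpoint} with $n = N$. Since every definable finite cover of $T$ splits over $T$, that proposition gives $c \in \dcl\bigl(\bigcup_{i=1}^{N-1} \acl(a_1 \ldots \a_i \ldots a_{N-1})\bigr)$, and reversing the relabelling this is precisely the conclusion of $B(N)$.

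The substantive work is therefore entirely contained in Proposition~\ref{mainpoint}, which in turn rests on the splitting hypothesis through Lemmas~\ref{usesplitting} and~\ref{splitting}; the present proof is just an assembly of these pieces. The only points requiring care are the two bookkeeping steps above: checking that $\acl(\emptyset) = \dcl(\emptyset)$ really does make $\tp(a_i/\emptyset)$ stationary with $\emptyset$-definable nonforking extension, so that the $p_i$ are legitimate inputs to Proposition~\ref{mainpoint}; and matching the index sets of $B(N)$ with those of Proposition~\ref{mainpoint} under the shift $a_i \mapsto a_{i+1}$, with $a_N$ taking the distinguished role of the final parameter $a_{N-1}$ in the statement of $B(N)$. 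Neither of these presents a genuine obstacle, so the theorem follows.
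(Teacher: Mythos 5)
Your proof is correct and follows essentially the same route as the paper's: verify $B(N)$ for all $N$ by feeding the global nonforking extensions $p_i$ of the $\tp(a_i/\emptyset)$ (stationary and $\emptyset$-definable since $\acl(\emptyset)=\dcl(\emptyset)$) into Proposition~\ref{mainpoint}, then conclude via Proposition~\ref{12} and Lemma~\ref{ntonp1}. The paper's own proof is just a terser version of this same assembly.
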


\begin{proof} Suppose \({n \geq 3}\) and \({a_1,\ldots, a_{n}}\) are
  independent over \({\emptyset}\). By the assumption
  \({\acl(\emptyset) = \dcl(\emptyset)}\), each type
  \({\tp(a_i/\emptyset)}\) is stationary, so has a unique global
  non-forking extension \({p_i}\). By independence,
  \[{a_i \models p_i\vert a_1 \ldots \hat{a}_i \ldots a_n}.\] So the
  condition required in property \({B(n)}\) follows from Proposition
  \ref{mainpoint}. Thus property \({B(n)}\) holds for all \({n}\) and
  therefore the result follows from Proposition \ref{12} and Lemma
  \ref{ntonp1}.
\end{proof}

\section{Amalgamating covers}\label{AmalgSec}

Suppose \({T}\) is a complete \({L}\)-theory and \({S}\) is a subset
of the sorts. We denote by \({L^S}\) the restriction of \({L}\) to the
sorts \({S}\) and \({T^S}\) the restriction of \({T}\). If
\({M \models T}\) we denote by \({M^S}\) the model of \({T^S}\) which
is the restriction of \({M}\) to the sorts in \({S}\).

\begin{lemma} Suppose that \({T^S}\) is fully embedded in \({T}\) and
  \({p^S(y)}\) is a global type of \({T^S}\) definable over
  \({\emptyset}\). Then there is a unique global type \({p(y)}\) of
  \({T}\) whose restriction to the \({S}\)-sorts is
  \({p^S}\). Moreover, \({p}\) is definable over \({\emptyset}\).
\end{lemma}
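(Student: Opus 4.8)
The plan is to use stable embeddedness of the $S$-sorts to transport formulas between $L$ and $L^S$, and to read off $p$ from $p^S$ via this transport. Write $\mathbb{M}^S$ for the $S$-part of the monster model, and note that the variable $y$ of $p^S$ is of $S$-sort. Since $T^S$ is fully (in particular stably) embedded in $T$, for every $L$-formula $\phi(y,\bar w)$ and every tuple $\bar b$ from $\mathbb{M}$ the set $\phi(\mathbb{M}^S,\bar b)$ is a parameter-definable subset of $\mathbb{M}^S$, hence equals $\chi(\mathbb{M}^S,\bar c)$ for some $L^S$-formula $\chi(y,\bar z)$ and some $\bar c$ from $\mathbb{M}^S$. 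I would then \emph{define} $p$ by declaring $\phi(y,\bar b)\in p$ iff $\chi(y,\bar c)\in p^S$. This is well-defined: if $\chi(\mathbb{M}^S,\bar c)=\chi'(\mathbb{M}^S,\bar c')$ then, $p^S$ being a complete type, $\chi(y,\bar c)\in p^S\iff\chi'(y,\bar c')\in p^S$. Completeness of $p$ is immediate since negation transports to negation, and consistency follows because any finite conjunction of formulas of $p$ corresponds under the transport to a finite conjunction of formulas of $p^S$, which is realised in $\mathbb{M}^S\subseteq\mathbb{M}$. Taking $\chi=\phi$ and $\bar c=\bar b$ for $L^S$-formulas shows that the restriction of $p$ to the $S$-sorts is $p^S$; this gives existence.

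For uniqueness, suppose $q$ is any global type of $T$ in the variable $y$ whose restriction to the $S$-sorts is $p^S$. Given an $L$-formula $\phi(y,\bar b)$, choose $\chi(y,\bar c)$ as above with $\phi(\mathbb{M}^S,\bar b)=\chi(\mathbb{M}^S,\bar c)$. Because $y$ is of $S$-sort, the sentence $\forall y\,(\phi(y,\bar b)\leftrightarrow\chi(y,\bar c))$ holds in $\mathbb{M}$, hence in every elementary extension, so $\phi(y,\bar b)\in q\iff\chi(y,\bar c)\in q$; and as $\chi$ is an $L^S$-formula, $\chi(y,\bar c)\in q\iff\chi(y,\bar c)\in p^S\iff\phi(y,\bar b)\in p$. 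Thus $q=p$. This argument simultaneously shows that \emph{any} global extension of $p^S$ is forced to agree with the $p$ constructed above.

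It remains to show that $p$ is definable over $\emptyset$, and I expect this to be the main obstacle. The pointwise stable embeddedness used above must be upgraded to its \emph{uniform} form: for a fixed $L$-formula $\phi(y,\bar w)$ there is a single $L^S$-formula $\chi(y,\bar z)$ such that for every $\bar b$ there is $\bar c\in\mathbb{M}^S$ with $\phi(\mathbb{M}^S,\bar b)=\chi(\mathbb{M}^S,\bar c)$. This uniformity is the standard compactness consequence of stable embeddedness in a sufficiently saturated model, so it applies in $\mathbb{M}$. Granting it, let $d_\chi(\bar z)$ be the $\emptyset$-definition of $p^S$ for $\chi$, i.e.\ $\chi(y,\bar c)\in p^S\iff\,\models d_\chi(\bar c)$, and set
\[ d_\phi(\bar w)\ :=\ \exists \bar z\,\big(\forall y\,(\phi(y,\bar w)\leftrightarrow\chi(y,\bar z))\wedge d_\chi(\bar z)\big). \]
This is an $L$-formula over $\emptyset$ (the quantifier $\exists\bar z$ automatically ranges over the $S$-sorts), and I would check $\models d_\phi(\bar b)\iff\phi(y,\bar b)\in p$: a witness $\bar c$ to $d_\phi(\bar b)$ satisfies $\phi(\mathbb{M}^S,\bar b)=\chi(\mathbb{M}^S,\bar c)$ and $\chi(y,\bar c)\in p^S$, whence $\phi(y,\bar b)\in p$; conversely, if $\phi(y,\bar b)\in p$ then the $\bar c$ furnished by uniform stable embeddedness witnesses $d_\phi(\bar b)$, and different admissible $\bar c$ give the same truth value of $d_\chi$ since they define the same subset of $\mathbb{M}^S$.

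In summary, I would structure the proof as: (i) define $p$ by the formula-transport and verify it is a complete consistent type restricting to $p^S$; (ii) deduce uniqueness from the fact that $\phi$ and its $L^S$-companion $\chi$ are equivalent on $S$-sort elements; (iii) obtain $\emptyset$-definability from uniform stable embeddedness together with the $\emptyset$-definability of $p^S$. Everything except the uniformity of stable embeddedness in step (iii) is routine bookkeeping with the transfer $\phi\leftrightarrow\chi$; that uniformity is the single non-routine ingredient.
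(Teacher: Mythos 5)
Your proof is correct. The existence and uniqueness parts are essentially the paper's argument in syntactic form: the paper shows that any two realizations of \(p^S\vert M^S\) have the same \(L\)-type over \(M\) by using stable embeddedness to convert a hypothetical separating \(L\)-formula \(\psi(c,y)\) into an \(L^S\)-formula \(\phi(d,y)\) with \(d\in M^S\) --- which is exactly your formula transport, read semantically rather than syntactically. Where you genuinely diverge is the \(\emptyset\)-definability step. You invoke the \emph{uniform} form of stable embeddedness (one \(\chi\) per \(\phi\)) and then write an explicit defining formula; the paper avoids the uniformization lemma altogether by observing that \(C=\{c:\psi(c,y)\in p\}\) and its complement are each unions, over all \(L^S\)-formulas \(\phi(z,y)\), of \(\emptyset\)-definable sets (built from the \(\emptyset\)-definition of \(p^S\)), so that \(C\) is ``clopen'' in the relevant sense and hence, by compactness in the saturated model, a finite union of \(\emptyset\)-definable sets. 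The two devices are the same compactness applied at different points: your route buys an explicit definition schema but needs the uniformization as a black box (note that the weaker version producing finitely many \(\chi_1,\dots,\chi_k\) for a given \(\phi\), which is an immediate compactness consequence, would already suffice --- just disjoin your \(d_\phi\) over the \(\chi_i\)), whereas the paper's route needs nothing beyond the pointwise statement of stable embeddedness but leaves the defining formula implicit.
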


\begin{proof} For the first part, it suffices to show that if
  \({M \models T}\) and \[{a, a' \models p^S\vert M^S},\] then
  \({\tp(a/M) = \tp(a'/M)}\). Suppose not. Then there are
  \({c \in M}\) and an \({L}\)-formula \({\psi(x,y)}\) with
  \({\models \psi(c,a)\wedge \neg\psi(c,a')}\). By stable
  embeddedness, there is \({d \in M^S}\) and an \({L^S}\)-formula
  \({\phi(z,y)}\) such that
  \[{\models (\forall y)(\psi(c,y)\leftrightarrow \phi(d,y)).}\] But
  then \({\models \phi(d,a)\wedge \neg\phi(d,a')}\) contradicting
  \({a,a' \models p^S\vert M^S}\).

  To see that \({p(y)}\) is definable over \({\emptyset}\), let
  \({\psi(x,y)}\) be an \({L}\)-formula as above and consider
  \({C = \{c: \psi(c,y) \in p(y)\}}\). We can write (letting
  \({\phi(z,y)}\) range over \({L^S}\)-formulas):
  \[ C = \bigcup_\phi \{ c: \exists d \, (\phi(d,y) \in p^S \mbox{ and
    } \models \psi(c,y)\leftrightarrow \phi(d,y))\},\]
  \[ \mathbb{M} \setminus C = \bigcup_\phi \{ c : \exists d \,
    (\phi(d,y) \not\in p^S \mbox{ and } \models
    \psi(c,y)\leftrightarrow \phi(d,y))\}.\] These are unions of
  \({\emptyset}\)-definable sets, so \({C}\) is
  \({\emptyset}\)-definable.

\end{proof}

In the construction of Section \ref{Tp}, suppose that \({p^S}\) is a
type from \({S}\) definable over \({\emptyset}\). We wish to compare
\({(T^S)_{p^S,\theta}}\) and \({T_{p,\theta}}\), where \({p}\) is as
in the above Lemma. Before stating the result, we need some
generalities about amalgamating theories.
 
Suppose \({T', T''}\) are complete theories in which the collection of
sorts of \({T}\) is fully embedded (that is, embedded and stably
embedded). For our purposes we can assume that the languages
\({L', L''}\) of these are relational and have intersection the
language \({L}\) of \({T}\).  If \({M', M''}\) are models of
\({T', T''}\) with a common \({T}\)-part \({M\models T}\) we denote by
\({M' \coprod_M M''}\) the \({L'\cup L''}\)-structure which is the
disjoint union over \({M}\) of \({M'}\) and \({M''}\) and in which no
new instances of atomic relations are added. So any
\({\emptyset}\)-definable relation is a boolean combination of
products of \({\emptyset}\)-definable relations on \({M'}\) and
\({M''}\). The theory of this does not depend on the choice of
\({M', M''}\) and we denote it by \({T'\times_T T''}\). Clearly
\({\Aut(M' \coprod_M M'')}\) is the fibre product
\begin{multline*}
\Aut(M') \times_{\Aut(M)} \Aut(M'') =\\ \{(g',g'') \in \Aut(M')\times
    \Aut(M'') : g'\vert M = g''\vert M\}.
\end{multline*}
Note that \({T}\) is
fully embedded in \({T'\times_T T''}\) and this is an algebraic
extension of \({T}\) if \({T'}\) and \({T''}\) are.
 
\begin{lemma} \label{fiddly} With the above notation,
  \({T_{p,\theta}}\) and \({(T^S)_{p^S,\theta} \times_{T^S} T}\) are
  interdefinable.
\end{lemma}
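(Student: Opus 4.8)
The plan is to realise both theories as structures on one and the same universe and then check that they have the same $\emptyset$-definable relations, using full embeddedness of $T^S$ in $T$ to account for the one genuinely new feature of $T_{p,\theta}$. Fix an $\omega$-saturated $M \models T$ with $S$-part $M^S$, a saturated elementary extension $M^*$, and $a^* \models p\vert M$; since $p$ restricts to $p^S$ and $T^S$ is stably embedded in $T$, we have $a^* \models p^S \vert M^S$, and $a^*$ lies in the $S$-sorts. Because $\theta$ is an $L^S$-formula, the set $\Theta(M,a^*)$ computed in $(M^*,a^*)$ coincides with the set computed in $((M^*)^S,a^*)$; call it $C$. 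Thus the model $M^+=C(M,a^*)$ of $T_{p,\theta}$ has universe $M\cup C\cup\{a^*\}$, the model $N$ of $(T^S)_{p^S,\theta}$ with universe $M^S\cup C\cup\{a^*\}$ shares the cover sort $C$ and the element $a^*$, and $N\coprod_{M^S}M$ — a model of $(T^S)_{p^S,\theta}\times_{T^S}T$ — has exactly the same universe $M\cup C\cup\{a^*\}$ as $M^+$. It remains to show that the structures $M^+$ and $N\coprod_{M^S}M$ on this common universe are interdefinable.

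One inclusion is immediate. The $L$-structure of $M$ is a (relativised) reduct of $M^+$, and the $(T^S)_{p^S,\theta}$-structure of $N$ is induced from $((M^*)^S,a^*)$, hence is a reduct of the structure induced from $(M^*,a^*)$; so both factors of the amalgam are reducts of $M^+$. Since every $\emptyset$-definable relation of $N\coprod_{M^S}M$ is, by definition of the fibre product, a boolean combination of products of $\emptyset$-definable relations of $N$ and of $M$, every such relation is $\emptyset$-definable in $M^+$.

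The substance is the reverse inclusion. By construction the atomic relations of $M^+$ are the atomic $L$-relations $R$ evaluated in $M^*$, where each cover-sort argument is read off through its representative in $(M^*)^S$. Split the arguments of such an $R$ into the block $\bar{w}$ lying in the $S$-sorts of $M^*$ (the elements of $M^S$, the representatives of cover elements, and $a^*$) and the block $\bar{v}$ coming from the non-$S$-sorts of $M$, so that the relation in question is $M^*\models R(\bar{w},\bar{v})$. By stable embeddedness of $(M^*)^S$ in $M^*$ — part of full embeddedness of $T^S$ in $T$ — and using $T=T^{\eq}$, there is an $L^S$-formula $\phi(\bar{w},\bar{z})$ and an $L$-definable function $f$ from the sorts of $\bar{v}$ to the $S$-sorts with $M^*\models R(\bar{w},\bar{v})\leftrightarrow\phi(\bar{w},f(\bar{v}))$ for all $\bar{v}$. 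As $\bar{v}\in M$ and $f$ is $\emptyset$-definable, $f(\bar{v})\in M^S$, which is precisely the common part over which the amalgam is formed. The value $f(\bar{v})$ is therefore computed inside the $M$-factor, while $\phi(\bar{w},f(\bar{v}))$ — an $L^S$-formula applied to $S$-sort and cover elements together with the parameter $f(\bar{v})\in M^S$ — is a relation of $N$. Hence $R$, restricted to $M^+$, is $\emptyset$-definable in $N\coprod_{M^S}M$; as this holds for every atomic $R$, the two structures are interdefinable.

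The crux, and the only step that is not bookkeeping, is the displayed application of stable embeddedness: one needs the uniform, definable form, producing a single $L^S$-formula $\phi$ together with a code function $f$ whose values lie in the shared $S$-sorts and are $L$-definable from $\bar{v}$. This is exactly what full embeddedness supplies (with $T=T^{\eq}$, the canonical code of the $(M^*)^S$-definable set $R(\cdot,\bar{v})\cap(M^*)^S$ lies in the $S$-sorts and in $\dcl(\bar{v})$), and it is what lets the parameter be transferred from the $M$-factor to the $N$-factor across the amalgam. Finally, by Lemma~\ref{types} and its evident analogue for $T^S$, none of the three theories depends on the choice of $M$, $M^*$ and $a^*$, so interdefinability for these particular models yields the claim for the theories.
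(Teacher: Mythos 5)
Your strategy differs from the paper's: you try to match up the definable relations of $M^+$ and of $N\coprod_{M^S}M$ directly, formula by formula, whereas the paper argues via automorphism groups, showing that any pair $g\in\Aut(M)$, $h\in\Aut((M^S)^+)$ agreeing on $M^S$ extends to an automorphism of $M^*$ fixing $a^*$, so that $\Aut(M^+)$ coincides with the fibre product of the automorphism groups of the two factors, and interdefinability follows for saturated models. Both arguments rest on the same input (stable embeddedness of the $S$-part), and your "easy" direction and the identification of the universes are fine. However, the step you yourself single out as the crux has a genuine gap: you assert that there is an $L$-definable function $f$ from the sorts of $\bar v$ \emph{into the $S$-sorts} with $R(\bar w,\bar v)\leftrightarrow\phi(\bar w,f(\bar v))$, on the grounds that the canonical code of $R(\cdot,\bar v)\cap(M^*)^S$ "lies in the $S$-sorts and in $\dcl(\bar v)$". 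Stable embeddedness together with $T=T^{\eq}$ puts that code in $\dcl^{\eq}(\bar v)$ and in $\dcl^{\eq}$ of the $S$-part, but \emph{not} in the $S$-sorts themselves, and your parameter transfer needs $f(\bar v)$ to be literally an element of $M^S$ in order to be fed to the $N$-factor. Concretely: let $T^S$ be a pure infinite set $X$ and let $T$ add the sort $Y$ of unordered pairs from $X$ with the membership relation $R(x,y)$. For $y=\{x_1,x_2\}$ the set $R(\cdot,y)$ is coded only by the imaginary $y$ itself; since $\dcl(y)\cap X=\emptyset$, no tuple from the $S$-sorts definable from $y$ codes it, and the function $f$ you need does not exist.

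The argument is repairable, but it needs an extra idea rather than bookkeeping: instead of a definable code function, quantify existentially over the shared sorts. Write $NR(\bar w,\bar v)$ as $\exists\bar z\,(\chi(\bar z,\bar v)\wedge N\phi(\bar w,\bar z))$, where $\bar z$ ranges over $M^S$, $\chi(\bar z,\bar v)$ is the $L$-formula (a relation of the $M$-factor) asserting that $\phi(\cdot,\bar z)$ agrees with $R(\cdot,\bar v)$ on the whole $S$-part, and $N\phi$ is the corresponding relation of the $N$-factor. One then checks, using $M\preceq M^*$ and the fact that the set of such codes $\bar z$ is $\bar v$-definable and nonempty in $M^*$, that a witness exists in $M^S$, and that any witness computes $R(\bar w,\bar v)$ correctly for all $\bar w$ in the $S$-part of $M^*$, including the representatives of cover elements and $a^*$; a compactness argument supplies the uniform choice of $\phi$. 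With that replacement your proof goes through, but as written the displayed biconditional cannot be formed, so the key step fails.
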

\begin{proof}Suppose \({M \preceq M^*}\) are sufficiently saturated
  and strongly homogeneous, and \({a \in M^*}\) realises
  \({p \vert M}\). In the notation of Section 3, let
  \({M^+ = M_{p,\theta} = C(M,a)}\) and
  \[{(M^S)^+ = (M^S)_{p^S,\theta} = C(M^S, a)}.\] So of course
  \({M^+ = M \cup (M^S)^+}\), as sets. It will suffice to show that if
  \({g \in \Aut(M)}\) and \({h \in \Aut((M^S)^+)}\) have the same
  restriction to \({M^S}\), then \({g \cup h}\) extends to an
  automorphism of \({M^*}\) which fixes \({a}\). Note that \({g}\)
  extends to an automorphism \({g'}\) of \({M^*}\) which fixes \({a}\)
  (by the homogeneity and the definability of \({p \vert M}\)) so we
  may adjust by \({(g')^{-1}}\) and assume that \({g}\) is the
  identity on \({M}\) and therefore \({h \in \Aut((M^S)^+/M^S)}\).

  We now observe that if \({e, e' \in (M^*)^S}\) have the same type in
  \({(M^*)^S}\) over \({M^S}\) then they have the same type in
  \({M^*}\) over \({M}\). Indeed, if \({\phi(c,y)}\) is any formula
  with parameters \({c \in M}\) where \({y}\) is of sort in \({S}\),
  then by stable embeddedness of \({S}\) there exists \({d \in M^S}\)
  and \({\psi(d,y)}\) such that
  \[{M \models (\forall y)(\phi(c,y)\leftrightarrow \psi(d,y))}.\] As
  \({M \preceq M^*}\) we have
  \({M^* \models (\forall y)(\phi(c,y)\leftrightarrow
    \psi(d,y))}\). So as \({e, e'}\) have the same type over \({d}\),
  it follows that \({\models \phi(c,e) \leftrightarrow \phi(c,e')}\),
  as required.

  So if \({m}\) denotes some enumeration of \({(M^S)^+}\),
  then \[{\tp(m/M, a) = \tp(hm/M,a)},\] therefore \({h}\) extends to an
  automorphism of \({M^*}\) fixing \({M}\) and \({a}\), by the
  homogeneity. \end{proof}

\begin{definition}\label{strongfc}\rm We say that a
 finite cover \({T' \supseteq T}\) is a \textit{strong finite cover} if
  there is a finite subset \({S}\) of the sorts of \({T}\) such that
  \({T'}\) is interdefinable (up to imaginaries) with
  \({T''\times_{T^S} T}\) for some finite cover
  \({T'' \supseteq T^S}\) of \({T^S}\).  An algebraic cover is a
  \textit{strong algebraic cover} if it is interdefinable with a sequence of
  strong finite covers of the base.
\end{definition}

So the above lemma shows that the definable finite covers
\({T_{p,\theta}}\) are strong finite covers. The following terminology
will also be convenient.

\begin{definition} \rm A complete theory \({T}\) is said to be
  \textit{irreducible} if \[{\acl(\emptyset) = \dcl(\emptyset)}\] (in
  \({T^{\eq}}\)). We refer to the expansion of a theory \({T}\) by
  constants for \({\acl(\emptyset)}\) as its \textit{irreducible
    component}.
\end{definition}

Note that if \({M}\) is a sufficiently saturated model of \({T}\) and
\({M^\circ}\) its irreducible component, then
\({\Aut(M^\circ) = \Aut(M/\acl(\emptyset))}\) is the intersection of
the open subgroups of finite index in \({\Aut(M)}\) and this has no
proper open subgroup of finite index. Also note that if
\({T', T''}\) are irreducible and have a common fully embedded part
\({T}\), then \({T'\times_T T''}\) is irreducible.

\begin{lemma} Suppose \({T}\) is irreducible and \({T \subseteq T'}\)
  is a finite cover whose irreducible component splits over
  \({T}\). Then \({T'}\) splits over \({T}\).
\end{lemma}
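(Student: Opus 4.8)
The plan is to show that the splitting of \({T'}\) over \({T}\) is witnessed by exactly the same object that already witnesses the splitting of the irreducible component, so that the lemma amounts to unwinding the definitions. The only preliminary point is to check that the hypothesis is well-posed. Write \({T'^{\circ}}\) for the irreducible component of \({T'}\), i.e.\ the expansion of \({T'}\) by constants for \({\acl^{T'}(\emptyset)}\). Since \({T}\) is irreducible we have \({\acl^{M^{\eq}}(\emptyset) = \dcl^{M^{\eq}}(\emptyset)}\) for the \({T}\)-part \({M}\), so Lemma~\ref{addingaclemptytoalgebraiccover} applies and shows that \({T'^{\circ}}\) is again an algebraic cover of \({T}\). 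Thus ``\({T'^{\circ}}\) splits over \({T}\)'' is a meaningful statement, and unwinding the definition it yields an expansion \({T''}\) of \({T'^{\circ}}\) which is an algebraic cover of \({T}\) interdefinable with \({T}\).

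The main step is then purely formal. Because \({T'^{\circ}}\) is itself an expansion of \({T'}\) (only constants were adjoined), the theory \({T''}\) is also an expansion of \({T'}\). Hence \({T''}\) is an expansion of \({T'}\) which is an algebraic cover of \({T}\) interdefinable with \({T}\), and this is precisely the definition of \({T'}\) splitting over \({T}\). In slogan form: the property of splitting over \({T}\) passes from any expansion of \({T'}\) down to \({T'}\) itself, and \({T'^{\circ}}\) is such an expansion.

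As a cross-check in the automorphism-group language of this section, fix a saturated \({M' \models T'}\) with \({T}\)-part \({M}\) and set \({G = \Aut(M')}\), \({K = \Aut(M'/M)}\), \({Q = \Aut(M)}\), with restriction \({\rho\colon G \to Q}\) having kernel \({K}\). The witness for the irreducible component is a closed subgroup \({C = \Aut(M'')}\) contained in \({G^{\circ} = \Aut(M'^{\circ}) \subseteq G}\), with \({\rho|_{C}\colon C \to Q}\) an isomorphism and \({C}\) complementing \({K \cap G^{\circ}}\) in \({G^{\circ}}\) (this is the content of \({\Aut(M'^{\circ}) = \Aut(M'^{\circ}/M) \rtimes \Aut(M'')}\)). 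Since \({C \subseteq G^{\circ}}\) we get \({C \cap K = C \cap (K \cap G^{\circ}) = 1}\), and since \({\rho(C) = Q}\) and \({K = \ker \rho}\) we get \({CK = G}\). Thus \({C}\) is already a closed complement to \({K}\) in the whole of \({G}\), giving \({\Aut(M') = \Aut(M'/M) \rtimes \Aut(M'')}\), which is exactly the splitting of \({T'}\).

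I do not expect a serious obstacle; the only thing requiring care is the bookkeeping around the adjoined constants. One must ensure that adding constants for \({\acl^{T'}(\emptyset)}\) does not destroy the (stable) embeddedness of \({M}\), which is exactly what irreducibility of \({T}\) secures via Lemma~\ref{addingaclemptytoalgebraiccover}, and one must note that the witness \({T''}\), constructed as an expansion of \({T'^{\circ}}\), still counts as an expansion of \({T'}\). Both are immediate, so the substance of the lemma is simply the recognition that ``splitting over \({T}\)'' transfers upward from an expansion to the theory it expands.
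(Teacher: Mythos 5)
Your group-theoretic ``cross-check'' is in substance the paper's own proof, just streamlined: where the paper shows $\Aut(M'/M)H\supseteq\Aut(M'/\acl(\emptyset))$ and then separately that every elementary permutation of $\acl(\emptyset)$ extends to an element of $\Aut(M'/M)$, you get $CK=G$ in one line from $\rho(C)=Q$ and $K=\ker\rho$. Your ``main step'' is a genuinely different (purely definitional) route: since the witness $T''$ expands $T'^{\circ}$, it also expands $T'$, so the literal definition of splitting is satisfied. That observation is valid as far as it goes, but it buys less than it appears to: what the rest of the paper actually uses (Lemma~\ref{splitting}) is the semidirect-product decomposition of the \emph{full} group $\Aut(M')$, and transferring a complement from $\Aut(M'/\acl(\emptyset))$ to $\Aut(M')$ is where the content of the lemma lies. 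So the cross-check is not optional; it is the proof.

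Within the cross-check there is one elision you should repair. You assert that $\rho|_{C}\colon C\to Q$ being an isomorphism ``is the content of'' the decomposition $\Aut(M'^{\circ})=\Aut(M'^{\circ}/M)\rtimes\Aut(M'')$. It is not: that decomposition only gives $\rho(C)=\rho(G^{\circ})$ together with injectivity of $\rho|_{C}$. The surjectivity of $\rho|_{C}$ onto $\Aut(M)$ requires the separate fact that $\rho(G^{\circ})=\Aut(M)$, i.e.\ that every automorphism of $M$ lifts to an automorphism of $M'$ fixing $\acl^{M'}(\emptyset)$ pointwise --- equivalently, that restriction is surjective for the algebraic cover $M'_{\acl(\emptyset)}$ of $M$ provided by Lemma~\ref{addingaclemptytoalgebraiccover}. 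This is precisely where irreducibility of $T$ does its work (it can fail without it), and it is the step the paper isolates as ``every element $k$ of $\Aut(\acl(\emptyset))$ extends to an element of $\Aut(M'/M)$''. You can alternatively extract $\rho(C)=Q$ from the interdefinability of $M''$ with $M$, but either way it deserves an explicit argument rather than being folded into the splitting datum. With that point made explicit, your argument is correct and matches the paper's.
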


\begin{proof} Let \({M'}\) be a saturated model of \({T'}\) and
  \({M}\) its \({T}\)-part. By assumption, there is a closed subgroup
  \({H}\) with
  \[{ \Aut(M' / \acl(\emptyset)) = \Aut(M'/M, \acl(\emptyset)) \rtimes
    H.}\] As \({M}\) is irreducible, restriction of \({H}\) to \({M}\)
  gives an isomorphism with \({\Aut(M)}\).  We want to show that
  \({ \Aut(M') = \Aut(M'/M) \rtimes H.}\) As
  \[{H \cap \Aut(M'/M, \acl(\emptyset)) = 1}\] and
  \({H \leq \Aut(M' / \acl(\emptyset))}\), it follows that
  \({H \cap \Aut(M'/M) = 1}\). To see that
  \({\Aut(M') = \Aut(M'/M)H}\), as
  \({\Aut(M'/M)H \geq \Aut(M'/\acl(\emptyset))}\), it will suffice to
  show that every element \({k}\) of \({\Aut(\acl(\emptyset))}\)
  extends to an element of \({\Aut(M'/M)}\). Extend \({k}\) to some
  automorphism \({h}\) of \({M'}\). There is \({g \in H}\) which
  agrees with \({h}\) on \({M}\), so \({g^{-1}h \in \Aut(M'/M)}\) and
  agrees with \({k}\) on \({\acl(\emptyset)}\).
\end{proof}

\begin{proposition}\label{46} Suppose \({T_0}\) is an irreducible,
  complete theory. Then there is an irreducible, strong algebraic
  extension \({T\supseteq T_0}\) with the property that every strong
  finite cover of \({T}\) splits over \({T}\).
\end{proposition}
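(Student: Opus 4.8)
The plan is to build $T$ as the union of an increasing $\omega$-chain $T_0 \subseteq T_1 \subseteq \cdots$ of irreducible theories, where at each stage I adjoin, as new sorts, enough strong finite covers that every strong finite cover of the limit is forced to split. The mechanism is a \emph{diagonal section}: once a cover has been freely adjoined as a sort, its automorphism group becomes a coordinate of the fibre product describing $\Aut$ of the bigger theory, and the projection onto that coordinate supplies exactly the section witnessing splitting. This is the concrete content of Hrushovski's slogan that freely adjoining extra sorts guarantees splitting.

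For the single step, I would take $T_n$ irreducible and, for each finite set $S$ of sorts of $T_n$ and each irreducible finite cover $W$ of $(T_n)^S$ (up to interdefinability there are only set-many such $S$ and $W$), form the strong finite cover $W \times_{(T_n)^S} T_n$ of $T_n$; I then let $T_{n+1}$ be the fibre product over $T_n$ of all of these adjoined simultaneously. This $T_{n+1}$ is a strong algebraic extension of $T_n$, and it is irreducible: each factor $W \times_{(T_n)^S} T_n$ is irreducible since $W$ and $T_n$ are irreducible with common fully embedded part $(T_n)^S$ (the observation recorded just before this proposition), and a fibre product of irreducibles over an irreducible base is again irreducible. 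Crucially, $\Aut(T_{n+1})$ is the fibre product over $\Aut(T_n)$ of the groups $\Aut(W)\times_{\Aut((T_n)^S)}\Aut(T_n)$, so for each pair $(S,W)$ the projection onto the $W$-coordinate is a continuous homomorphism $\rho_{S,W}\colon \Aut(T_{n+1})\to\Aut(W)$ whose composite with restriction to the $S$-sorts equals the canonical map $\Aut(T_{n+1})\to\Aut((T_n)^S)$.

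I would then set $T=\bigcup_n T_n$. As a direct limit of irreducible strong algebraic extensions it is an irreducible strong algebraic extension of $T_0$ (any element of $\acl^T(\emptyset)$ already lies in some $T_n$, where it is in $\dcl(\emptyset)$). Now let $T'$ be a strong finite cover of $T$, so $T'$ is interdefinable with $W \times_{T^S} T$ for some finite $S$ and some finite cover $W$ of $T^S$. By the Lemma immediately preceding this proposition it suffices to split the irreducible component of $T'$, and by Lemma~\ref{addingaclemptytoalgebraiccover} together with irreducibility of $T$ this component is interdefinable with $W^\circ \times_{T^S} T$ for the irreducible component $W^\circ$ of $W$; so I may assume $W$ is irreducible. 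Since $S$ is finite it consists of sorts of some $T_n$, whence $T^S=(T_n)^S$ (the old sorts acquire no new induced structure, by stable embeddedness), so $W$ occurs in the family adjoined at stage $n$. Composing the restriction $\Aut(T)\to\Aut(T_{n+1})$ with $\rho_{S,W}$ gives a continuous homomorphism $\sigma\colon\Aut(T)\to\Aut(W)$ lifting the restriction $\Aut(T)\to\Aut(T^S)$, and then $\gamma\mapsto(\sigma(\gamma),\gamma)$ is a continuous section of $\Aut(W\times_{T^S}T)=\Aut(W)\times_{\Aut(T^S)}\Aut(T)\to\Aut(T)$. Its image is a closed complement to the kernel $\Aut(W\times_{T^S}T/T)$, which is precisely the condition that $W\times_{T^S}T$ splits over $T$.

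The hard part will be the single step: checking that adjoining the covers realises $\Aut(T_{n+1})$ as the stated fibre product, so that the $W$-coordinate projection genuinely exists — this is the point where the freely adjoined copy of the cover is seen to provide the section. The remaining ingredients are routine given the earlier results: that only set-many covers arise at each stage (so the construction is legitimate and terminates at $\omega$), that irreducibility is preserved along the chain and in the limit, that the construction yields a strong algebraic extension (via Lemma~\ref{fiddly} and the definition of strong finite cover), and that a continuous section of the automorphism extension corresponds to splitting in the sense of the definition.
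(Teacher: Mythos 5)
Your proposal is correct and follows essentially the same route as the paper: an $\omega$-chain of irreducible strong algebraic extensions obtained by freely amalgamating (a copy of) every irreducible strong finite cover at each stage, followed by the observation that in the limit the finite-cover factor $W$ of any strong finite cover already sits among the sorts of $T$, so that the identification of the adjoined copy with $W$ yields a continuous closed section of $\Aut(W\times_{T^S}T)\to\Aut(T)$. The paper phrases that section as conjugation by a bijection $\alpha:M'\to M^R$ onto the adjoined sorts rather than as your projection onto the $W$-coordinate of the fibre product, but these are the same mechanism, and the one step you flag as remaining (that $\Aut$ of the amalgam is the stated fibre product) is likewise asserted without further proof in the paper.
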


\begin{proof} By the previous lemma, it is enough to ensure that every
  irreducible, strong finite cover of \({T}\) splits over
  \({T}\). Using amalgamation, we construct \({T}\) as the union of an
  \({\omega}\)-chain of irreducible, strong algebraic extensions
  \({T_0 \subseteq T_1 \subseteq T_2 \subseteq \ldots}\) with the
  property that \({T_{i+1}}\) contains a copy of every strong finite
  cover of \({T_i}\).

  More precisely, using such a chain, we can ensure that, for a saturated model \({M}\) of
  \({T}\), if \({S}\) consists of the sorts of \({T_0}\) together with
  a finite number of extra sorts and if \({M' \supseteq M^S}\) is a
  (strong) irreducible finite cover of \({M^S}\), then there is a
  (fully embedded) collection of sorts \({R \supseteq S}\) (with
  \({R \setminus S}\) finite) and a bijection
  \({\alpha : M' \to M^R}\) which is the identity on \({M^S}\) and
  such that \({\alpha, \alpha^{-1}}\) map \({\emptyset}\)-definable
  sets to \({\emptyset}\)-definable sets.

  Note that a strong finite cover of \({M}\) is of the form
  \({M'' = M' \coprod_{M^S} M}\) for some such \({M^S \subseteq
    M'}\). We require a splitting of the map
  \[{\rho : \Aut(M'') \to \Aut(M)}\] given by restriction to
  \({M}\). Let \({\alpha : M' \to M^R}\) be as in the previous
  paragraph. We define a map \({\gamma : \Aut(M) \to \Aut(M'')}\) as
  follows. If \({g \in \Aut(M)}\) let \({\gamma(g)(x)}\) equal
  \({\alpha^{-1}g\alpha(x)}\) if \({x \in M'}\) and equal \({g(x)}\)
  if \({x \in M}\). This is a continuous group embedding with closed
  image and \({\rho(\gamma(g)) = g}\) for all \({g \in \Aut(M)}\).
\end{proof}

If \({T_0}\) is not irreducible, then we can obtain a similar result,
but without the irreducibility in the conclusion. Combining
Proposition \ref{46} with Lemma \ref{fiddly} we obtain:

\begin{theorem} \label{47}
  Suppose \({T_0}\) is a complete, stable theory with the property that 
  \({\acl(\emptyset) = \dcl(\emptyset)}\) in \({T_0^{\eq}}\).  Then
  there is a strong algebraic extension \({T \supseteq T_0}\) with
  \({\acl(\emptyset) = \dcl(\emptyset)}\) in \({T^{eq}}\) and such
  that every definable cover \({T_{p,\theta}}\) of \({T}\) splits over
  \({T}\).
\end{theorem}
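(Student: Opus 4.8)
The plan is to read off $T$ directly from Proposition~\ref{46} and then to convert its conclusion (splitting of \emph{strong} finite covers) into the required conclusion (splitting of \emph{definable} covers $T_{p,\theta}$) by invoking Lemma~\ref{fiddly}. So the proof is essentially an assembly of the two preceding results in the correct order.

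First, since $\acl(\emptyset) = \dcl(\emptyset)$ in $T_0^{\eq}$, the theory $T_0$ is irreducible, so I would apply Proposition~\ref{46} to obtain an irreducible strong algebraic extension $T \supseteq T_0$ in which every strong finite cover of $T$ splits over $T$. Irreducibility of $T$ is precisely the statement that $\acl(\emptyset) = \dcl(\emptyset)$ in $T^{\eq}$, so this part of the conclusion is immediate. I would also record, for the sake of later applications (such as feeding $T$ into Theorem~\ref{35}), that $T$ is stable: each step in the $\omega$-chain used to build $T$ in Proposition~\ref{46} is an algebraic extension, and in particular the fibre products $T'\times_{T^S} T''$ occurring there are again algebraic covers of the base; algebraic covers of a stable theory are stable, and this survives the union of the chain. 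This stability bookkeeping is routine and is not formally demanded by the present statement.

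It remains to show that every definable cover $T_{p,\theta}$ of $T$ splits over $T$. Given such a cover, with $p$ a global type of $T$ definable over $\emptyset$ and $\theta$ witnessing the relevant algebraicity, I would let $S$ be a finite set of sorts of $T$ containing the home sorts of the variables of $p$ together with all sorts occurring in $\theta$. Then $p$ restricts to a global type $p^S$ of $T^S$ definable over $\emptyset$, and $p$ is recovered from $p^S$ as in the lemma preceding Lemma~\ref{fiddly}. By Lemma~\ref{fiddly}, $T_{p,\theta}$ is interdefinable with $(T^S)_{p^S,\theta}\times_{T^S} T$; since $(T^S)_{p^S,\theta}$ is a finite cover of $T^S$ by Lemma~\ref{oldfinitecover}, this exhibits $T_{p,\theta}$ as a strong finite cover of $T$ in the sense of Definition~\ref{strongfc}. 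By the choice of $T$, every strong finite cover of $T$ splits over $T$, so $T_{p,\theta}$ splits over $T$, as required.

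I do not expect a genuine obstacle here, since the substantial work has already been done in Proposition~\ref{46} (the amalgamation of covers and the construction of the splitting homomorphism $\gamma$) and in Lemma~\ref{fiddly} (identifying $T_{p,\theta}$ as a fibre product over a finite set of sorts). The only points requiring care are the bookkeeping in the last paragraph, namely that the finite set $S$ can be chosen so that Lemma~\ref{fiddly} genuinely applies (i.e.\ that $p$ is the canonical extension of its restriction $p^S$ to the $S$-sorts), and, if stability of $T$ is to be asserted, that stability is preserved both at each amalgamation step and in the limit of the $\omega$-chain; both are routine.
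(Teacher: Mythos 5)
Your proof is correct and takes essentially the same route as the paper: Theorem~\ref{47} is stated there as an immediate consequence of ``combining Proposition~\ref{46} with Lemma~\ref{fiddly}'', which is precisely the assembly you carry out. Your extra bookkeeping (stability of $T$, and choosing the finite sort set $S$ so that $p$ is the canonical extension of $p^S$) only makes explicit what the paper leaves implicit.
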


Putting this together with Theorem \ref{35} we obtain:

\begin{theorem}\label{uniquenesscor} 
  Suppose \({T_0}\) is a complete, stable theory  with the property that
  \({\acl(\emptyset) = \dcl(\emptyset)}\) in \({T_0^{\eq}}\).  Then there is
  an algebraic extension \({T \supseteq T_0}\) which has existence and
  uniqueness for independent \({n}\)-amalgamation over
  \({\acl(\emptyset)}\) for all \({n}\).
\end{theorem}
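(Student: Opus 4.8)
The plan is to read off the theorem as a one-line composition of the two results just established, Theorem~\ref{47} and Theorem~\ref{35}: the first produces an algebraic extension in which the relevant definable finite covers split, and the second converts this splitting into full higher amalgamation. So essentially all that has to be done is to check that the extension supplied by Theorem~\ref{47} satisfies the hypotheses of Theorem~\ref{35}.

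First I would apply Theorem~\ref{47} to $T_0$ to obtain a strong algebraic extension $T \supseteq T_0$ with $\acl(\emptyset) = \dcl(\emptyset)$ in $T^{\eq}$ and with the property that every definable cover $T_{p,\theta}$ of $T$ splits over $T$. I would then feed this $T$ into Theorem~\ref{35}, whose three hypotheses are stability of $T$, the equality $\acl(\emptyset) = \dcl(\emptyset)$, and splitting of every definable finite cover of $T$. The middle one is handed to us verbatim by Theorem~\ref{47}. For the last one, note that by Definition~\ref{sec:algebr-covers-defin} a \emph{definable finite cover} of $T$ is precisely a cover of the form $T_{p,\theta}$, and moreover, as observed in the remark after Proposition~\ref{mainpoint}, the proof of Theorem~\ref{35} (which runs through Lemma~\ref{usesplitting}) only ever builds and needs to split single covers $T_{p,\theta}$; hence the splitting guaranteed by Theorem~\ref{47} is exactly what is consumed. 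With all three hypotheses in hand, Theorem~\ref{35} delivers $N$-existence and $N$-uniqueness over $\acl(\emptyset)$ for all $N$, and since a strong algebraic extension is in particular an algebraic extension, this is precisely the assertion of the theorem.

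The one hypothesis not handed to us directly is stability of $T$, and this is the only place where I would expect to do genuine (though still routine) work. By the construction behind Theorem~\ref{47} (Proposition~\ref{46}) the theory $T$ is the union of an $\omega$-chain $T_0 \subseteq T_1 \subseteq \cdots$ in which each $T_{i+1}$ is a strong finite cover of $T_i$. Stability passes to unions of such chains, since any instance of the order property for a formula $\phi$ involves only finitely many symbols and so is already witnessed in some $T_i$; it therefore suffices to know that a finite cover of a stable theory is stable. That is a standard type-counting argument: the base sorts are stably embedded and the cover lies in their algebraic closure, so the number of types over a small set $B$ in the cover is bounded by the number of types over $B$ in the stable base multiplied by the bounded, profinite-kernel factor. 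Granting this, all the hypotheses of Theorem~\ref{35} are verified and the theorem follows; I would thus expect the stability verification to be the main (and essentially only) obstacle, with everything else being an interface match between the two cited theorems.
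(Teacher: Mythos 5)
Your proposal is correct and is essentially identical to the paper's own proof, which derives the theorem by combining Theorem~\ref{47} with Theorem~\ref{35} exactly as you describe (the paper simply states ``Putting this together with Theorem~\ref{35} we obtain:''). The stability of the algebraic cover $T$, which you rightly flag as the only point needing verification, is taken for granted in the paper and your sketch of it (stable embeddedness of the base plus algebraicity of the new sorts, passed through the union of the $\omega$-chain) is the standard and correct argument.
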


\begin{remark}\rm  Note that when we use the construction in Proposition \ref{46} to prove Theorems \ref{47} and \label{uniquenesscor} we can be more economical.  It suffices to construct $T_{i+1}$ (in the proof of \ref{46}) from $T_{i}$ by adjoining all \textit{definable} finite covers of $T_i$, rather than adjoining all strong finite covers.
\end{remark}

\section{An alternative approach: \({n}\)-witnesses}
\label{cha:witnesses-failure}

In this Section we will give an alternative proof of Theorem
\ref{uniquenesscor} which avoids the notion of splitting.

\subsection{Localise failure of \texorpdfstring{\(B(n)\)}{Bn}}

The next definition is only some re-writing of Property \({B(n)}\) in
a way which will be more suitable for our later needs. This was inspired by Definition 3.12 of \cite{Goodrick2015-GOOTPA-3}.
We use the following notation. By 
  \({\phi(\overline x,A)}\) we mean a formula
  \({\phi(\overline x,\overline a)}\) for some tuple \({\overline a}\)
  of \({A}\).

\begin{definition}\rm 
  We fix some stable theory \({T=T^{\eq}}\) and some \({n\ge 3}\).  Let the
  tuple \({a_1,\ldots ,a_n,f_1,\ldots ,f_n}\) consist of elements of the
  monster model of \({T}\) and $A$ be a subset of the monster model. We say that this tuple
  \({a_1,\ldots ,a_n,f_1,\ldots ,f_n}\) is an \index{witness!\({n}\)-}
  \emph{\index{\({n}\)-!witness}\({n}\)-witness} over \({A}\), if we
  have that it satisfies the following four properties:

  \begin{enumerate}
  \item \({a_1,\ldots ,a_n}\) are independent over \(A\),
  \item
    \(f_n \in \acl (a_1\ldots a_{n-1}A) - \dcl \bigl(
    \bigcup_{i=1}^{n-1}\acl(a_1\ldots \hat a_i\ldots
    a_{n-1}A)\bigr)\),
  \item for $i \leq n-1$ we have $f_i \in \acl(a_1\ldots \hat{a_i}\ldots a_nA)$. More explicitly, there exist formulae
    \[{\phi_{i}(x_1,\ldots \hat x_{i} \ldots ,x_{n},y;z_{i})}\] and natural numbers \({m_{i}}\) such that 
  \[\models \phi_{i}(a_1,\ldots \hat a_i\ldots ,a_{n},A;f_i)\] and
 \[{\models \forall x_{1},\ldots\hat x_{i}\ldots ,x_{n},y
     \exists^{<m_{i}}z\phi_{i}(x_1,\ldots \hat x_{i}\ldots
     ,x_{n-1},y;z_{i})},\]
\item \({f_n\in \dcl(f_1\ldots f_{n-1})}\).
\end{enumerate}
\end{definition}

\begin{lemma}\label{sec:localise-failure-bn}
  Property \(B(n)\) over \(A\) fails if and only if there exists an
  \(n\)-witness over \(A\).
\end{lemma}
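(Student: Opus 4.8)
The plan is to prove both directions by unwinding the definitions, with the key observation that property $B(n)$ is precisely a statement about when an algebraic element $c$ lying in a certain definable closure must already lie in a smaller definable closure. First I would recall that $B(n)$ over $A$ asserts: whenever $a_0,\ldots,a_{N-1}$ (here relabelled $a_1,\ldots,a_n$) are independent over $A$, then every $c \in \acl(a_1,\ldots,a_{n-1}A)$ which lies in $\dcl\bigl(\bigcup_{i=1}^{n-1}\acl(a_1\ldots\hat a_i\ldots a_{n-1}a_nA)\bigr)$ already lies in $\dcl\bigl(\bigcup_{i=1}^{n-1}\acl(a_1\ldots\hat a_i\ldots a_{n-1}A)\bigr)$. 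The $n$-witness definition is engineered to package exactly a counterexample to this implication.

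For the direction $(\Leftarrow)$, suppose an $n$-witness $a_1,\ldots,a_n,f_1,\ldots,f_n$ over $A$ exists. Property (1) gives the required independence. Property (2) says $f_n \in \acl(a_1\ldots a_{n-1}A)$ but $f_n \notin \dcl\bigl(\bigcup_{i=1}^{n-1}\acl(a_1\ldots\hat a_i\ldots a_{n-1}A)\bigr)$, so $f_n$ is precisely the element exhibiting the failure of the $B(n)$ conclusion, provided I can show $f_n$ does lie in the larger definable closure. This is where properties (3) and (4) combine: property (3) gives $f_i \in \acl(a_1\ldots\hat a_i\ldots a_nA) \subseteq \dcl\bigl(\bigcup_{i=1}^{n-1}\acl(a_1\ldots\hat a_i\ldots a_n A)\bigr)$ for each $i \le n-1$, and property (4) gives $f_n \in \dcl(f_1\ldots f_{n-1})$; chaining these shows $f_n$ lies in the large definable closure but not the small one, so $B(n)$ fails with $c := f_n$.

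For the direction $(\Rightarrow)$, suppose $B(n)$ over $A$ fails. Then I obtain independent $a_1,\ldots,a_n$ over $A$ and an element $c \in \acl(a_1\ldots a_{n-1}A)$ with $c \in \dcl\bigl(\bigcup_{i=1}^{n-1}\acl(a_1\ldots\hat a_i\ldots a_{n-1}a_nA)\bigr)$ but $c \notin \dcl\bigl(\bigcup_{i=1}^{n-1}\acl(a_1\ldots\hat a_i\ldots a_{n-1}A)\bigr)$. I set $f_n := c$, which immediately verifies (1) and (2). The task is to manufacture $f_1,\ldots,f_{n-1}$ witnessing (3) and (4). Since $c$ lies in the definable closure of $\bigcup_i \acl(a_1\ldots\hat a_i\ldots a_nA)$, there is a $0$-definable (over $A$) function expressing $c$ in terms of finitely many parameters, one batch $f_i$ drawn from each $\acl(a_1\ldots\hat a_i\ldots a_nA)$; I take $f_i$ to be exactly such a tuple of parameters, so that $f_n = c \in \dcl(f_1\ldots f_{n-1})$, giving (4). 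Each $f_i$ being algebraic over $a_1\ldots\hat a_i\ldots a_nA$ yields the formulae $\phi_i$ and bounds $m_i$ of property (3) by extracting the isolating algebraic formula and an explicit finiteness bound.

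The main obstacle I expect is the bookkeeping in $(\Rightarrow)$: the raw statement of failure only tells me $c$ is in the definable closure of the \emph{union} of the sets $\acl(a_1\ldots\hat a_i\ldots a_nA)$, so I must carefully split the defining data into the pieces $f_1,\ldots,f_{n-1}$ that each live inside a \emph{single} $\acl(a_1\ldots\hat a_i\ldots a_nA)$, and verify the explicit algebraicity bounds $m_i$ and formulae $\phi_i$ of clause (3), which are a precise first-order rendering of ``$f_i$ is algebraic over $a_1\ldots\hat a_i\ldots a_nA$''. There may be a subtlety about whether the parameters $A$ appearing in $\phi_i$ are handled via the variable $y$ as in the definition; this is merely notational and can be absorbed by letting $y$ enumerate a tuple from $A$. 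None of this is deep, but it is the step requiring the most care to match the four clauses exactly as stated.
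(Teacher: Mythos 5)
Your proposal is correct and follows essentially the same route as the paper's proof: both directions are pure unwinding of definitions, with the $(\Rightarrow)$ direction taking $f_n$ to be the offending element $c$, extracting finitely many defining parameters from the union of the algebraic closures, and grouping them into the tuples $f_1,\ldots,f_{n-1}$ so that clauses (3) and (4) hold. The paper's proof is terser but contains no additional idea beyond what you describe.
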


\begin{proof}
  Assume that \(B(n)\) over \(A\) fails.  So this means that there
  exists some  \(a_{1},\ldots ,a_{n}\) independent over $A$ such that
  \begin{multline*}
    B= \Bigl(\acl (a_1 \ldots  a_{n-1}A)\cap
    \dcl\bigl(\bigcup_{i=1}^{n-1}\acl(a_1 \ldots \hat a_i\ldots  a_n  A)\bigr)\Bigr)\\
    - \dcl \bigl(\bigcup_{i=1}^{n-1}\acl(a_1 \ldots\hat a_i\ldots
     a_{n-1} A)\bigr)
  \end{multline*}
  is non-empty.  Then by definition of \(\dcl\) and \(\acl\) there
  exists \[f_{i}\in \acl(a_1 \ldots \hat a_i\ldots  a_n  A)\] such that
  \({ a_1,\ldots ,a_n,f_1,\ldots ,f_n}\) is an \({n}\)-witness over
  \({A}\). Pick any \(f_{n}\) in \(B\) and then find the
  \(f_{i}\in \acl(a_1 \ldots \hat a_i\ldots a_n A)\) accordingly such
  that \(f_n\in \dcl(f_1 \ldots  f_{n-1})\).  Then of course we can find
  formulae \(\phi_{i}\) such that the condition 3 is satisfied.  Of
  course if there is an \(n\)-witness, then \(B(n)\) fails as
  well, as we can easily reverse the above process.
\end{proof}

\subsection{The finite cover eliminates a witness}

\begin{proposition}\label{coveroverparameter}
  Let \({T(=T^\eq)}\) be a stable theory with $\acl(\emptyset) = \dcl(\emptyset)$.  Let  \({a_1,\ldots ,a_n}\) be  an
  independent sequence (over \(\emptyset\)) and let \({d}\) be any element of
  the set
  \begin{multline*}
    \Bigl(\acl(a_1\ldots a_{n-1})\cap \dcl\bigl( \bigcup_{j=1}^{n-1}
    \acl(a_1\ldots \hat a_j \ldots a_n )\bigr)\Bigr)\\
    -\dcl \bigl(\bigcup_{j=1}^{n-1}\acl(a_1\ldots \hat a_j \ldots
    a_{n-1})\bigr).
  \end{multline*}
  Then there exists a finite cover \({{T}^+}\) of \({T}\) such
  that
  \[d \in \dcl^+ \bigl(\bigcup_{j=1}^{n-1}
    \acl^{+}(a_1\ldots \hat a_j \ldots a_{n-1})\bigr),\] where by
\({\dcl^{+}}\) and \({\acl^{+}}\) we mean the
evaluation of \({\acl}\) and \({\dcl}\) in the theory
$T^+$.
\end{proposition}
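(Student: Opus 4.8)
The plan is to run the definable finite cover construction of Section~\ref{Tp} using $a_n$ as the distinguished realisation $a^*$, so that in the resulting cover the dependence of $d$ on $a_n$ is absorbed into the named constant $*$ while the witnessing elements reappear in $\acl^+$ of parameter sets that do not mention $a_n$. First, writing $f_n = d$, the hypothesis $d \in \dcl\bigl(\bigcup_{j=1}^{n-1}\acl(a_1\ldots \hat a_j \ldots a_n)\bigr)$ supplies elements $f_j \in \acl(a_1\ldots \hat a_j \ldots a_n)$ for $1 \le j \le n-1$ with $f_n \in \dcl(f_1\ldots f_{n-1})$; this is precisely the data of an $n$-witness over $\emptyset$ (cf. Lemma~\ref{sec:localise-failure-bn}). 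Since $\acl(\emptyset) = \dcl(\emptyset)$, the type $p := \tp(a_n/\emptyset)$ is stationary and its global nonforking extension is definable over $\emptyset$ (as in the proof of Theorem~\ref{35}). As $a_1,\ldots,a_n$ are independent over $\emptyset$, a standard construction yields an $\omega$-saturated $M \models T$ containing $a_1,\ldots,a_{n-1}$ with $a_n$ independent from $M$ over $a_1\ldots a_{n-1}$; by transitivity $a_n$ is independent from $M$ over $\emptyset$, so $a^* := a_n$ realises $p\vert M$. For each $j \le n-1$ let $b_j$ enumerate $\acl(a_1\ldots \hat a_j \ldots a_{n-1}) \subseteq M$, so that $f_j \in \acl(a^* b_j)$, witnessed by a formula $\theta_j(x,y,z)$ with $\theta_j(a^*,b,z)$ algebraic for all $b$.

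Next I would form the definable finite cover $T^+ := T_{p,(\theta_j : j \le n-1)}$ of Remark~\ref{gendefcov}, realised by $M^+ = C(M,a^*)$; this is a finite cover of $T$ by Lemma~\ref{oldfinitecover}. By construction each pair $(b_j,f_j)$ is an element of $M^+$ with $\pi(b_j,f_j) = b_j$, and since $\pi$ is finite-to-one we get $(b_j,f_j) \in \acl^+(b_j) \subseteq \acl^+(a_1\ldots \hat a_j \ldots a_{n-1})$, the final inclusion holding because $M$ is embedded in $M^+$. It therefore suffices to prove that $d \in \dcl^+\bigl((b_1,f_1),\ldots,(b_{n-1},f_{n-1})\bigr)$.

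For this I would invoke Lemma~\ref{types}(2). Write $\bar g$ for the tuple $(b_1,f_1,\ldots,b_{n-1},f_{n-1})$ and suppose $d' \in M^+$ has $\tp^{M^+}(d'\bar g) = \tp^{M^+}(d\bar g)$. Then Lemma~\ref{types}(2) gives $\tp^{M^*}(d'\bar g/a^*) = \tp^{M^*}(d\bar g/a^*)$, and as the $\bar g$-coordinates coincide this yields $\tp^{M^*}(d'/f_1\ldots f_{n-1}) = \tp^{M^*}(d/f_1\ldots f_{n-1})$; since $d \in \dcl^{M^*}(f_1\ldots f_{n-1})$ we conclude $d' = d$. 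Hence $d \in \dcl^+(\bar g)$, and together with the previous paragraph this gives $d \in \dcl^+\bigl(\bigcup_{j=1}^{n-1}\acl^+(a_1\ldots \hat a_j \ldots a_{n-1})\bigr)$, as required.

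The conceptual crux — rather than any single hard computation — is to see why passing to the cover helps at all: the element $a^* = a_n$ is interpreted by a constant symbol of $L^+$, so every automorphism of $M^+$ is forced to fix it, and the witnessing elements $f_j$ are repackaged as cover elements $(b_j,f_j)$ lying in $\acl^+$ of sets not mentioning $a_n$. The two technical points to watch are arranging $M$ so that $a_n$ genuinely realises $p\vert M$ (handled by the independence calculus above) and the transfer of definability through Lemma~\ref{types}(2); both are routine once the construction is set up with $a^* = a_n$.
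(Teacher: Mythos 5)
Your proof is correct and follows essentially the same route as the paper's: extract the witness data $f_1,\ldots,f_{n-1}$, form the definable finite cover $T_{p,(\theta_j)}$ with $p=\tp(a_n)$ and $a^*=a_n$, and observe that the cover elements $(b_j,f_j)$ land in $\acl^+$ of sets not mentioning $a_n$. The only presentational difference is at the last step, where you transfer $d\in\dcl(f_1\ldots f_{n-1})$ into $M^+$ via Lemma~\ref{types}(2) rather than by naming the isolating formula through its new relation symbol $NR_\psi$; these are the same underlying fact about the induced structure on $C(M,a^*)$.
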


\begin{proof}
   By
  Lemma~\ref{sec:localise-failure-bn} we can fix an \({n}\)-witness
  \({a_1,\ldots ,a_n}\), \({f_1,\ldots ,f_n}\) over \({\emptyset}\)
  with \({d=f_n}\).  Let
  \({\phi_{i}(x_1,\ldots\hat x_{i} \ldots,x_{n-1};z_{i})}\) for all
  \({i}\) with \({1\le i\le n-1}\) be the formulae satisfying
  condition \({3}\) of the witness definition.  Now extend \({T}\) to
  the finite cover \(T^+ = {T_{p,(\phi_{i}:1\le i\le n-1)}}\) 
  defined in Remark~\ref{gendefcov} with \({p=\tp(a_n)}\), and  $M$ in the construction is  an $\omega$-saturated model of $T$ containing $a_1,\ldots, a_{n-1}$.We may
  assume that
  \({a_n=a^* \models p\vert M}\),
  where \({a^*}\) is the new generic constant of the finite cover. Let $M^*$, $M^+$  be the models of $T$, $T^+$ in the construction. 

  Then because
  \(M^*\models {\phi_{i}(a_{1},\ldots \hat a_{i}\ldots ,a_{n};f_{i})}\)
  and \({\phi_{i}(a_{1},\ldots \hat a_{i}\ldots ,a_{n};z_{i})}\) is
  algebraic
  we have that
  \[{f_{i}\in \acl^{M^*}(a_1,\ldots \hat a_i\ldots ,a_{n-1})}.\] Now
  there is a formula \[{\psi(y_1,\ldots ,y_{n-1};y_n)}\] such that
  \({M^*\models\psi(f_1,\ldots ,f_{n-1};f_n)}\) and
  \[{M^*\models\exists^{=1}y\psi(f_1,\ldots ,f_{n-1};y)}.\] As in the construction of $M^+$, let 
  \(NR_{\psi}\) denotes the new relation
    symbol (coming from \({\psi}\)).  Hence we have that
  \[NR_{\psi}(f_1,\ldots ,f_{n-1};y)\] isolates
  \({f_{n}}\) and therefore shows that
  \({f_{n}\in \dcl^{M^+}(f_{j}:1\le j\le n-1)}\).  This shows that
  \[d \in \dcl^{M^+} \bigl(\bigcup_{i=1}^{n-1}
      \acl^{M^+}(a_1\ldots\hat a_i\ldots a_{n-1})\bigr).\] Hence
  \({T_{p,(\theta_{i}:1\le i\le m)}}\)  is the finite cover we were
  looking for.
\end{proof}

\subsection{Eliminate all witnesses}

Now we are able to prove \ref{uniquenesscor} by adding all definable finite cover as constructed in section \ref{Tp}
and then repeating this process \({\omega}\)-many times. The construction is essentially that of Proposition~\ref{46}.

\begin{proof}(of \ref{uniquenesscor})
  First a brief description of the proof. Add any possible finite
  cover constructed in section \ref{Tp}  to our
  theory.  Then use the Proposition~\ref{coveroverparameter} to note
  that \({B(n)}\) over \({\emptyset}\) is true for any independent
  sequence of old sort.  Then repeat this process \({\omega}\)-many
  times to eliminate any malicious behaviour (in terms of failure of
  \({B(n)}\)) in any of these new algebraic covers.

  We start the real proof. 
  Denote the language of \({T_0}\) by
  \({L_0}\).  Fix a saturated model \({M}\) of \({T_0}\).  We
  will construct a chain \({(M_i:i\in\omega)}\) of algebraic covers of
  \({M}\) with language \({L_i}\) such that
  \({M_i={(M_i^\eq)}_{\acl^\eq(\emptyset)}}\), \({M_0=M}\) and \({M_i}\)
  is  saturated.   Note that it will be enough to prove that
  \({M_i}\) is an algebraic cover of \({M_{i-1}}\) as then by induction
  it follows that \({M_i}\) is an algebraic cover of \({M}\).  Note that
  the requirement \({M_i={(M_i^\eq)}_{\acl^\eq(\emptyset)}}\) can be
  made true as it holds for \({M_0}\) and we can just go over to
  \({{(M_i^\eq)}_{\acl^\eq(\emptyset)}}\) (and preserve that it is an
  algebraic cover) inductively by Lemma~\ref{weigeneralfinitecover}
  and Lemma~\ref{addingaclemptytoalgebraiccover}.

  We construct all finite covers
  \({{(\hat M_i)}_{p,\phi_{i}:1\le i\le m}}\) for some type \({p}\) in
  \({S_{M_i}(\emptyset)}\) and \({\phi_{i}(x,y,z)\in L_i}\) some
  formulae such that there is a \({k_{i}\in \mathbb N}\) such that for
  any \({b,a\in M_i}\) we have
  \({M_i\models\exists^{=k}z\phi_i(b,a,z)}\).  We then amalgamate all of all these
  covers \({{(M_i)}_{p,\phi_{i}:1\le i\le m}}\) together into a new
  structure \({M_{i+1}}\), as in Section~\ref{AmalgSec}. By adding parameters, we may assume that $\acl(\emptyset) = \dcl(\emptyset)$ in $M_{i+1}$. We may also add in imaginary elements.
  
    Let \({T_{i+1}}\) be the theory of
  \({M_{i+1}}\).   By Proposition~\ref{coveroverparameter} we have that for any \({M_i}\) the
  following holds
  (*):\newline
  for any independent sequence
  \({(a_i:1\le i\le n)}\) in \(M_{i}\) (which is from
  \({S_{M_i}(\emptyset)}\)) and any \({d}\) in the set
  \begin{multline*}
    \Bigl(\acl^{M_i} (a_1\ldots a_{n-1})\cap \dcl^{M_i} \bigl(
    \bigcup_{j=1}^{n-1}\acl^{M_i} (a_1\ldots \hat a_j \ldots
    a_n)\bigr)\Bigr) \\-\dcl^{M_i}
    \bigl(\bigcup_{j=1}^{n-1}\acl^{M_i}(a_1\ldots \hat a_j \ldots
    a_{n-1})\bigr),
  \end{multline*}
  we have
  \[d\in \dcl^{M_{i+1}}
    \bigl(\bigcup_{j=1}^{n-1}\acl^{M_{i+1}}(a_1\ldots \hat a_j \ldots
    a_{n-1})\bigr).\] Take \({M^*=\bigcup_{i\in \mathbb N} M_i}\) and
  \({T^*}\) its theory. Then  \({M^*}\) is an algebraic cover of \({M}\).
  It has elimination of imaginaries (in fact \({M^*={(M^*)}^\eq}\)), as
  any imaginary of \({M^*}\) is already an imaginary of some \({M_i}\)
  and hence an element of \({M_{i+1}}\).

  We claim that \({T^*}\) has \({n}\)-amalgamation for every \({n}\).
  For that we check that Property \({B(n)}\) holds for every \({n}\) to finish
  the proof. So take \({d}\) in
  \[
    \Bigl(\acl^{M^*} (a_1\ldots a_{n-1})\cap \dcl^{M^*} \bigl(
    \bigcup_{j=1}^{n-1}\acl^{M^*} (a_1\ldots \hat a_j \ldots
    a_n)\bigr)\Bigr)
  \]
  for \({a_1,\ldots ,a_n}\) some independent sequence in \({M^*}\).
  Now we have that these \({a_1,\ldots ,a_n}\) are part of some
  \({M_j}\).  We can then find some \({M_i}\) (with \({i\ge j}\)) such
  that \({d}\) is in
  \[
    \Bigl(\acl^{M_i} (a_1\ldots a_{n-1})\cap \dcl^{M_i} \bigl(
    \bigcup_{j\neq n}\acl^{M_i} (a_1\ldots \hat a_j \ldots
    a_n)\bigr)\Bigr)
  \]
  but then as already noted we have
  \[d\in \dcl^{M_{i+1}} \bigl(\bigcup_{j=1}^{n-1}\acl^{M_{i+1}}(a_1\ldots \hat
    a_j \ldots a_{n-1})\bigr). \qedhere\]
\end{proof}

\section{Separable forking}\label{subchaptersepindep}

We are going to establish results which will show that, under some conditions,
amalgamation problems over parameters can be translated to
amalgamation problems over \({\emptyset}\). This then shows that, in
this case, total uniqueness over the empty set implies total uniqueness
over all sets.

\begin{definition}\rm 
  We say that a theory \({T}\) has \emph{separable forking}
  if it is stable and
  if in \({T^\eq}\) for all sets
  \({A\subset B}\) there exists some \({C}\) such that
  \({A\indep_\emptyset C}\) and \({\acl(AC)=\acl(B)}\).
\end{definition}

\begin{remark}\rm 
  Note that this definition makes sense outside the stable
  context as we can define separability for any theory with a good notion of
  independence. The next lemma will be  true in any such theory. In fact,
  all results apart from the last corollary in this section are true in the context of rosy
  theories. For this see Section 5.1 of \cite{uea63347}.
\end{remark}

It seems reasonable to ask:

\begin{question}\rm 
  Is the notion of separable forking equivalent to any other model
  theoretic notion?
\end{question}

We will show that almost strongly minimal theories with a $0$-definable strongly minimal set have a separable independence
notion.

\begin{lemma}\label{sec:separ-indep-noti-2}
  If \({T}\) has separable forking, then any algebraic
  cover \({T_1}\) of \({T}\) has separable forking.
\end{lemma}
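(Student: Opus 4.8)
The plan is to show that separable forking transfers from $T$ to an algebraic cover $T_1$. So suppose $T$ has separable forking, let $M_1 \models T_1$ with $T$-part $M$, and work in $T_1^{\eq}$. Given sets $A \subseteq B$ in the monster model of $T_1$, I must produce $C$ with $A \indep_{\emptyset} C$ and $\acl(AC) = \acl(B)$. The key bridge is Lemma~\ref{weigeneralfinitecover}: since $T_1$ is an algebraic cover of $T$, every element of $M_1$ (and hence, after passing to $T_1^{\eq}$, every imaginary) lies in the algebraic closure of its trace on the $T$-sorts. Thus, up to interalgebraicity, the sets $A$ and $B$ are controlled by the $T$-part.

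First I would reduce to the $T$-part. Set $A^0 = \acl(A) \cap M$ and $B^0 = \acl(B) \cap M$ (algebraic closures in $M_1^{\eq}$), so that $\acl(A) = \acl(A^0)$ and $\acl(B) = \acl(B^0)$ by Lemma~\ref{weigeneralfinitecover}; note there is a subtlety requiring weak elimination of imaginaries in $T$, which holds since $T = T^{\eq}$ by our standing assumption, and the statement is about algebraic closures so replacing $A, B$ by interalgebraic sets is harmless. Since $M$ is stably embedded in $M_1$, forking independence for tuples from $M$ computed in $T_1$ agrees with that computed in $T$; this is the standard fact that stable embeddedness makes the $T$-reduct orthogonal-free and preserves the independence relation on its own sorts. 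Now apply separable forking in $T$ to the pair $A^0 \subseteq B^0$ (here one first expands $A^0$ to include $B^0 \cap \acl(A^0)$, but since we only need $\acl(A^0 C) = \acl(B^0)$ this causes no difficulty): we obtain $C \subseteq M^{\eq}$ with $A^0 \indep_{\emptyset} C$ (in $T$) and $\acl^M(A^0 C) = \acl^M(B^0)$.

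Then I would transfer back to $T_1$. From $A^0 \indep_{\emptyset} C$ in $T$ and stable embeddedness, conclude $A^0 \indep_{\emptyset} C$ in $T_1$, and hence $A \indep_{\emptyset} C$ since $A$ and $A^0$ are interalgebraic. For the closure condition, I would argue $\acl^{M_1}(AC) = \acl^{M_1}(A^0 C) = \acl^{M_1}(\acl^M(A^0 C)) = \acl^{M_1}(\acl^M(B^0)) = \acl^{M_1}(B^0) = \acl^{M_1}(B)$, where the crucial middle equality uses that algebraic closure in $M_1$ of a set of $T$-imaginaries depends only on its $T$-algebraic closure, which again follows because $M$ is stably embedded and every element of $M_1$ is algebraic over $M$.

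The main obstacle I expect is the transfer of the independence relation between $T$ and $T_1$: one must carefully justify that $A^0 \indep_{\emptyset} C$ holds in $T_1$ precisely when it holds in $T$ for sets lying in the $T$-sorts. This rests on the fact that an algebraic cover is in particular a stably embedded, hence "orthogonal," extension, so the restriction of nonforking in $T_1$ to the $T$-sorts is nonforking in $T$; combined with Lemma~\ref{weigeneralfinitecover} to push everything down to the $T$-part, the rest is bookkeeping with algebraic closures. The $\acl$-closure manipulations are the routine part; the genuine content is confirming that separability is an invariant of the independence relation and that this relation restricts correctly to the base sorts.
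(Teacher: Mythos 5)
Your proposal is correct and takes essentially the same route as the paper's proof: both reduce to the $T$-part by intersecting the (algebraically closed) sets with the $T$-sorts and invoking Lemma~\ref{weigeneralfinitecover} (after arranging weak elimination of imaginaries), apply separability in $M$, and transfer the independence back to $M_1$ via full embeddedness of the $T$-part, finishing with the routine $\acl$ bookkeeping.
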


\begin{proof}  Note first that we may assume that both $T_1$ and $T$ have weak elimination of  imaginaries (by including all imaginary sorts, if necessary). 

Let $M_1 \models T_1$ and let $M$ be the $T$-part of $M_1$. Suppose $A_1 \subseteq B_1 \subseteq M_1$ and let $A = A_1 \cap M$ and $B = B_1 \cap M$. We may assume that $A_1, B_1$ are algebraically closed in $M_1$ and therefore $A, B$ are algebraically closed in $M$. Note that by Lemma~\ref{weigeneralfinitecover}, we have $A_1 = \acl(A)$ (in $M_1$) and $B_1 = \acl(B)$.

By separability in $M$, there is an algebraically closed $C \subseteq B$ with $A \ind^{M}_\emptyset C$ and $\acl(AC) = B$. As $M$ is fully embedded in $M_1$ we have $A\ind^{M_1}_\emptyset C$ and therefore $A_1 \ind^{M_1}_\emptyset C_1$, where $C_1 = \acl(C)$ (in $M_1$). As $B_1 = \acl(A_1C_1)$, the result follows.
\end{proof}

Recall that a theory has \emph{geometric elimination of imaginaries} if every imaginary element is inter-algebraic with a tuple of real elements.

\begin{lemma}\label{sec:separ-indep-noti-4}
 Suppose $T$ is a stable theory with \({U}\)-rank 1
 with geometric elimination of
  imaginaries. Then \({T}\) has separable forking.
\end{lemma}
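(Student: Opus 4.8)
The plan is to reduce the statement to a basis-extension argument inside the pregeometry that $\acl$ determines on the real elements, using $U$-rank $1$ to identify forking independence with independence in that pregeometry, and using geometric elimination of imaginaries to pass from arbitrary imaginary sets to real ones. Throughout I work in $\MM^{\eq}$ and write $\acl$, $\dcl$ for closure in $T^{\eq}$.

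First I would reduce to the case where $A$ and $B$ consist of real elements. Given $A \subseteq B$, geometric elimination of imaginaries lets me replace each imaginary by an inter-algebraic tuple of real elements, producing real sets $A_0$ and $B_0$ with $\acl(A_0) = \acl(A)$ and $\acl(B_0) = \acl(B)$; in particular $\acl(A_0) \subseteq \acl(B_0)$. Since forking independence and algebraic closure over a set depend only on the algebraic closures of the sets involved, any $C$ satisfying $A_0 \ind_\emptyset C$ and $\acl(A_0 C) = \acl(B_0)$ will automatically satisfy $A \ind_\emptyset C$ and $\acl(A C) = \acl(B)$. So it suffices to treat $A_0$ and $B_0$.

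Next I would invoke the geometry coming from $U$-rank $1$. On the real elements $\acl$ is a pregeometry, and for a finite tuple the $U$-rank equals its dimension in this pregeometry (the Lascar inequalities force $U(a_1\ldots a_k/C) = \sum_i U(a_i/Ca_1\ldots a_{i-1})$, and each summand is $0$ or $1$ according to whether $a_i \in \acl(Ca_1\ldots a_{i-1})$). Hence for finite real tuples $\bar a \ind_C \bar b$ holds iff $\dim(\bar a/C\bar b) = \dim(\bar a/C)$, i.e. iff $\bar a$ is $\acl$-independent from $\bar b$ over $C$; by finite character this extends to arbitrary sets, so forking independence over $\emptyset$ coincides with $\acl$-independence over $\emptyset$.

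Finally, the construction: choose an $\acl$-basis $\bar a$ of $A_0$ over $\emptyset$ and, using $\acl(A_0)\subseteq\acl(B_0)$, extend it to an $\acl$-basis $\bar a\,\bar b$ of $B_0$ over $\emptyset$; set $C = \bar b$. Then $\acl(A_0 C) = \acl(\acl(A_0)\,\bar b) = \acl(\bar a\,\bar b) = \acl(B_0)$, while $\bar a\,\bar b$ being $\acl$-independent makes $A_0$ independent from $C$ over $\emptyset$ in the pregeometry, hence $A_0 \ind_\emptyset C$. Pulling back through the first-paragraph reduction yields the required $C$. I expect the only delicate points to be the bookkeeping with imaginaries, specifically verifying that the passage through geometric elimination of imaginaries respects both $\acl$ in the $\eq$-sense and forking independence, and confirming that the $U$-rank-to-dimension identity genuinely upgrades to forking independence for infinite sets; both are routine but need to be stated carefully.
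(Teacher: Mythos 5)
Your proof is correct and follows essentially the same route as the paper's: reduce to real elements via geometric elimination of imaginaries, then take a basis of $\acl(A)$ in the $\acl$-pregeometry and extend it to a basis of $\acl(B)$, letting $C$ be the new basis elements, with $U$-rank $1$ guaranteeing that pregeometry independence agrees with forking independence. You spell out two steps the paper leaves implicit (the bookkeeping of the reduction to real sets and the identification of forking with $\acl$-independence via the Lascar inequalities), but the underlying argument is identical.
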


\begin{proof}
Let $M$ be a model of $T$.  By geometric elimination of
  imaginaries, it is enough to check that forking  is separable in the real
  elements.
  
  Fix \({A,B}\) with $A \subseteq B \subseteq M$.
  We construct an independent sequence  \(\{a_i\mid i<\alpha\}\) with  \({\acl(\{a_i\mid i<\alpha\})=\acl(B)}\) such that for some
  \({\lambda<\alpha}\), we have that \({\acl(\{a_i:i<\lambda)\})=\acl(A)}\). Indeed if we are able to do this, then \({C=\{a_i\mid \lambda\le i<\alpha\}}\) is independent of \({A}\) and \({\acl(AC)=\acl(B)}\) and hence we are finished.

  So take any \({a_{0}\in \acl(A)}\). If \(\{a_{i}\mid i<\beta\}\) is constructed pick any \({a_{\beta}\in \acl(A)-\acl(\{a_{i}\mid i<\beta\})}\) or if  \({\acl(A)-\acl(\{a_{i}\mid i<\beta\})=\emptyset}\) then pick any \({a_{\beta}\in \acl(A)-\acl(\{a_{i}\mid i<\beta\}}\). By
  \({U}\)-rank 1 we then know that this sequence is indeed independent, as \({U  (a_{\beta}/ (a_{i}\mid i<\beta))=1}\) by construction.
\end{proof}

\begin{lemma}\label{sec:separ-indep-noti-3}
Suppose \({T_1}\) is an almost strongly minimal \({L_1}\)-theory  which has a \({0}\)-definable
  strongly minimal formula $\phi$ with geometric elimination of imaginaries. Then  \({T_1}\) has a separable forking.
\end{lemma}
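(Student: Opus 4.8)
The plan is to exhibit $T_1$ as an algebraic cover of the strongly minimal set $\phi$ and then combine the two preceding lemmas. I would work in a monster model $\MM \models T_1$ and set $D = \phi(\MM)$. Since the witnessing strongly minimal formula $\phi$ is $0$-definable, almost strong minimality gives $\MM = \acl(\phi(\MM)) = \acl(D)$ (any finitely many parameters occurring in the almost-strong-minimality hypothesis lie in $\acl(\emptyset) \subseteq \acl(D)$). Let $T_\phi = \Th(D)$, where $D$ is equipped with its full induced structure. I then claim that $T_1$ is an algebraic cover of $T_\phi$: the sort $D$ is embedded (we gave it its induced structure), it is stably embedded because a strongly minimal set is always stably embedded in a stable theory, and $\MM = \acl(D)$ shows the cover lies in the algebraic closure of the single sort $D$.

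Next I would check that $T_\phi$ satisfies the hypotheses of Lemma~\ref{sec:separ-indep-noti-4}. Being strongly minimal, $T_\phi$ is stable of $U$-rank $1$. For geometric elimination of imaginaries in $T_\phi$, take $e \in D^{\eq}$; by geometric elimination of imaginaries in $T_1$ there is a real tuple $\bar m$ of $\MM$ with $e$ and $\bar m$ interalgebraic in $\MM^{\eq}$. Since $\MM = \acl(D)$, each coordinate of $\bar m$ is algebraic over a finite tuple from $D$, and using strong minimality together with the stable embeddedness of $D$ one should be able to replace $\bar m$ by a tuple $\bar d$ from $D$ still interalgebraic with $e$; the key point is that $\acl^{\eq}(e)\cap D$ already computes $e$, which one sees by a dimension count inside the strongly minimal pregeometry of $D$. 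Hence every $e \in D^{\eq}$ is interalgebraic with a real tuple of $T_\phi$, so $T_\phi$ has geometric elimination of imaginaries.

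With these two facts in hand, Lemma~\ref{sec:separ-indep-noti-4} gives that $T_\phi$ has separable forking, and then Lemma~\ref{sec:separ-indep-noti-2}, applied to the algebraic cover $T_1 \supseteq T_\phi$, yields that $T_1$ has separable forking, as required. I expect the main obstacle to be the middle step: verifying that the strongly minimal reduct $T_\phi$ inherits geometric elimination of imaginaries from $T_1$, that is, that an imaginary $e \in D^{\eq}$ is interalgebraic not merely with some real tuple of $\MM$ but with a tuple drawn from $D$ itself. This is precisely where the hypothesis that $T_1$ (and not $D$ in isolation) has geometric elimination of imaginaries is used, and it must be combined carefully with the coordinatisation coming from $\MM = \acl(D)$ and with the pregeometry of the strongly minimal set, since a naive descent of a real $\MM$-tuple to $D$ only yields $e \in \acl(\bar d)$ rather than the interalgebraicity needed.
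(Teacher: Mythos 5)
Your overall route is exactly the paper's: realise $T_1$ as an algebraic cover of the induced structure on $D=\phi(\MM)$ and then chain Lemma~\ref{sec:separ-indep-noti-4} with Lemma~\ref{sec:separ-indep-noti-2}. The divergence, and the gap, lies in your middle step. The paper reads the hypothesis ``with geometric elimination of imaginaries'' as a hypothesis on the induced structure on $\phi$ --- that is precisely what its proof needs in order to apply Lemma~\ref{sec:separ-indep-noti-4} to $M=\phi(M_1)$ --- so no descent argument is required there. You instead place the hypothesis on $T_1$ and try to push it down to $D$, and that descent is not merely left as a sketch: it is false in general. Take $T_1$ to be the two-sorted structure $(A,V)$ where $V$ is an infinite $\F_q$-vector space acting regularly on $A$. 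Then $A$ is $0$-definable, strongly minimal and $\MM=\dcl(A)$, and $T_1$ has geometric elimination of imaginaries (its imaginaries are, up to interalgebraicity, those of the $\F_q$-vector space $V\oplus\F_q$, in which $A$ sits as an affine hyperplane). But the induced structure on $A$ is affine $\F_q$-space, and for $a\neq b$ the imaginary $e=(a,b)/E$, where $E$ identifies pairs with equal difference, satisfies $\acl^{\eq}(e)\cap A=\emptyset$: the translation subgroup of $\Aut(\MM)$ fixes $e$ while acting regularly on $A$. Since $e\notin\acl^{\eq}(\emptyset)$, it is not interalgebraic with any tuple from $A$, so your ``key point'' that $\acl^{\eq}(e)\cap D$ computes $e$ fails. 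The dimension count you allude to only yields that each coordinate of a minimal witnessing tuple $\bar d\in D^{l}$ lies in $\acl^{\eq}(e,\bar d_{\neq i})$ (via $U$-rank $1$ and symmetry of forking), which does not give $\bar d\in\acl^{\eq}(e)$; the affine example shows this obstruction is real.

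So either adopt the paper's reading of the hypothesis, in which case your first and third paragraphs already constitute the whole proof, or a genuinely different argument is needed and the lemma under your reading would have to be re-examined. A smaller point in the same spirit: your claim that the parameters occurring in the almost-strong-minimality hypothesis lie in $\acl(\emptyset)$ is not justified in general; it is an assumption built into the phrase ``almost strongly minimal with a $0$-definable strongly minimal formula'' (the paper makes it silently too), and it is needed to get $\MM=\acl(D)$ and hence the algebraic-cover step at all.
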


\begin{proof} Let $M_1$ be a saturated model of $T_1$. We can regard $M = \phi(M_1)$ (with the induced structure from $M_1$) as a structure in its own right which is fully embedded in $M_1$. Then $M_1$ is an algebraic cover of $M$. By  Lemma~\ref{sec:separ-indep-noti-4}, $M$ has separable forking. So by Lemma~\ref{sec:separ-indep-noti-2}, the same is true of $M_1$.
\end{proof}

The following result is the main use of the separable forking.
\begin{theorem}\label{sepIndepAmalgoverParameters}
 Suppose $T$ is a stable theory with separable forking.  Further suppose that \({T}\) has
  \({l}\)-uniqueness over \({\emptyset}\) for all \({2\le l\le N+1}\).
  Then \({T}\) has \({k}\)-uniqueness over any set for all \({2\le k \le N}\).
\end{theorem}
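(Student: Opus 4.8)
The plan is to reduce $k$-uniqueness over an arbitrary set $X$ to $(k+1)$-uniqueness over $\emptyset$ by \emph{promoting the base $X$ to an extra vertex} of the amalgamation diagram; separable forking is exactly what makes this promotion possible. By the relativised form of Proposition~\ref{12} it suffices, for each $k$ with $2\le k\le N$, to show that every $k$-amalgamation problem $A\colon\P(k)^-\to\C$ over $X$ (so $A(\emptyset)=\acl(X)$) has at most one solution up to isomorphism. The case $k=2$ is immediate from stability, so assume $k\ge 3$.

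Fix such a problem together with two solutions $A',A''$, normalised (as in the proof of Proposition~\ref{12}) so that they agree on $\P(k)^-$, all structure maps there being inclusions, and differ only in the top map $\{0,\dots,k-1\}\to[k]$. For each corner we have $\acl(X)\subseteq A(\{i\})$, so by separable forking there is $C_i$ with $C_i\indep_\emptyset X$ and $\acl(X C_i)=A(\{i\})$. Since the corners $A(\{i\})$ are independent over $\acl(X)$ and $C_i\subseteq A(\{i\})$, monotonicity gives $C_i\indep_{\acl(X)}\bigcup_{j<i}C_j$, and transitivity of nonforking over $\emptyset\subseteq\acl(X)$ then upgrades $C_i\indep_\emptyset X$ and $C_i\indep_{\acl(X)}\bigcup_{j<i}C_j$ to $C_i\indep_\emptyset X\bigcup_{j<i}C_j$; hence $\{X,C_0,\dots,C_{k-1}\}$ is independent over $\emptyset$. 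I would then define a $(k+1)$-amalgamation problem $\hat A\colon\P(k+1)^-\to\C$ over $\emptyset$ in which vertex $k$ carries $\acl(X)$, vertex $i<k$ carries $\acl(C_i)$, and $\hat A(s)=\acl(\bigcup_{i\in s}(\text{vertex }i))$. Because $\acl(X C_i)=A(\{i\})$, the \emph{star of $k$} reproduces the original problem: $\hat A(t\cup\{k\})=A(t)$ for every $t\in\P(k)^-$, while the faces $t\subseteq\{0,\dots,k-1\}$ not containing $k$ produce the new ``$X$-free'' objects $\acl(\bigcup_{i\in t}C_i)$.

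Each solution $A'$ of $A$ extends to a solution $\hat A'$ of $\hat A$: put $\hat A'(t\cup\{k\})=A'(t)$, and for $t\subseteq\{0,\dots,k-1\}$ let $\hat A'(t)=\acl(\bigcup_{i\in t}C_i)$ sit, via inclusions, inside the common top $\hat A'([k+1])=A'(\{0,\dots,k-1\})=\acl(X C_0\cdots C_{k-1})$; the independence of $\{C_0,\dots,C_{k-1}\}$ over $\emptyset$ makes this a genuine solution, and existence is checked by hand (no separate appeal to Lemma~\ref{ntonp1} is needed, though it is available). Crucially, the $X$-free faces depend only on the shared data $A$, so $\hat A'$ and $\hat A''$ are two solutions of the \emph{same} problem $\hat A$. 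By $(k+1)$-uniqueness over $\emptyset$ (available since $k+1\le N+1$) there is an isomorphism $\theta$ of their tops compatible with all the maps from $\P(k+1)^-$. Restricting $\theta$ along the star of $k$ yields an isomorphism $A'\cong A''$, and since $\hat A'(\{k\})=\hat A''(\{k\})=\acl(X)$ with inclusion maps, $\theta$ fixes $\acl(X)$; so this is an isomorphism of solutions over $X$, giving $k$-uniqueness over $X$.

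The main obstacle is the promotion step itself. The tempting shortcut — rewriting failure of $B(k)$ over $X$ directly as failure of $B(k+1)$ over $\emptyset$ with $X$ as one vertex — does not quite close: the $(k+1)$-vertex version carries one spurious ``$X$-free'' term (an $\acl(\cdots)$ not containing $X$) in both the hypothesis and the conclusion, and the extra conclusion term cannot be discarded for free. Phrasing the reduction through amalgamation problems rather than through $B(k)$ is what dissolves this difficulty, since the $X$-free faces become genuine extra data that is \emph{identical} for the two solutions and is resolved by uniqueness over $\emptyset$. The only substantive computation is the forking bookkeeping showing that the freed corners $C_i$ are \emph{jointly} independent over $\emptyset$ together with $X$, and this is precisely where separable forking (rather than mere stability) is used.
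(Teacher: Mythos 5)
Your overall strategy --- use separable forking to free the corners from the base and then promote $X$ to an extra vertex of the diagram, invoking $(k+1)$-uniqueness over $\emptyset$ --- is the same idea as the paper's proof, and your forking bookkeeping (getting $X, C_0,\ldots,C_{k-1}$ jointly independent over $\emptyset$ by transitivity) is correct. But there is a genuine gap at the step where you extend the \emph{second} solution $A''$ to a solution $\hat A''$ of the $(k+1)$-problem. The category $\P(k+1)^-$ contains the $X$-free face $\{0,\ldots,k-1\}$, so $\hat A''$ must supply a map $\hat A''(\{0,\ldots,k-1\}\to[k+1])$ from $\acl(C_0\cdots C_{k-1})$ into the top which agrees, on each $\acl(\bigcup_{i\in u}C_i)$ with $u\in[k]^{k-1}$, with the restriction of $\sigma_u := A''(u\to[k]) = \hat A''(u\cup\{k\}\to[k+1])$. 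You assert this map can be taken to be the inclusion, i.e.\ that each $\sigma_u$ is the identity on $\acl(\bigcup_{i\in u}C_i)$. That does not follow: after the standard normalisation, $\sigma_u$ is only known to fix $\bigcup_{v\subsetneq u}\acl(X\bigcup_{i\in v}C_i)$ pointwise, and $\acl(\bigcup_{i\in u}C_i)$ is not contained in that union; even $B(k)$ over $\emptyset$ only places $\sigma_u$ restricted to $\acl(\bigcup_{i\in u}C_i)$ in $\Aut(\acl(\bigcup_{i\in u}C_i)/\bigcup_{v\subsetneq u}\acl(\bigcup_{i\in v}C_i))$, which is exactly the group that can be nontrivial when uniqueness is at issue. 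So $\hat A''$ as you define it need not be a functor: the ``spurious $X$-free term'' you dismissed in the $B(k)$ formulation reappears here as the face $\hat A(\{0,\ldots,k-1\})$.

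The gap is repairable, but the repair is the substance of the proof: one must glue the compatible system of maps $\sigma_u$ restricted to $\acl(\bigcup_{i\in u}C_i)$, for $u\in[k]^{k-1}$, into a single elementary permutation of $\acl(C_0\cdots C_{k-1})$ and use that (not the inclusion) as $\hat A''(\{0,\ldots,k-1\}\to[k+1])$; this gluing is Corollary~\ref{16} applied to the independent tuple $C_0,\ldots,C_{k-1}$ and needs $B(\ell)$ over $\acl(\emptyset)$ for $\ell\le k$, available from your hypotheses via Proposition~\ref{12}. This is essentially what the paper does, phrased through $B(N)$ rather than through amalgamation functors: it reduces to $B(N)$ over the base, uses separable forking exactly as you do, then applies Corollary~\ref{16} once over the $N$ independent elements $a_0,\ldots,a_{N-1}$ to produce an auxiliary map $\tau$ handling the $X$-free data, and a second time over all $N+1$ independent elements (including an enumeration of the base) to glue everything. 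Your functor reformulation does not dissolve the difficulty; it relocates it to the $X$-free top face, where it still has to be resolved by the relative-uniqueness gluing.
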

\begin{proof} As before, we work in a monster model $\mathbb{M}$ of $T = T^{\eq}$. Let $A$ be algebraically closed. By Proposition~\ref{12} it will suffice to show that property $B(N)$ holds over $A$. So let $a_0,\ldots, a_{N-1}$ be independent over $A$ and, for uniformity of notation, let $a_N$ be an enumeration of $A$. Let  
\[\sigma \in \Aut(\acl(a_0,\ldots, a_{N-2}a_N)/\bigcup_{i = 0}^{N-2} \acl(a_0\ldots \a_i \ldots a_{N-2} a_N)).\]
We need to show that 
\[\sigma \in \Aut(\acl(a_0,\ldots, a_{N-2}a_N)/\bigcup_{i = 0}^{N-2} \acl(a_0\ldots \a_i \ldots a_{N-2} a_{N-1}a_N)).\]

By separability we may assume that $a_i \ind A$  for $i \leq N-1$. By properties of forking, it follows that  $a_0,\ldots, a_{N-1}, a_N$ is an independent sequence (over $\emptyset$). 

First, note that by Corollary~\ref{16} there is an elementary map $\tau$ with  $\tau \vert \acl(a_0,\ldots, a_{N-2}) = \sigma$ and $\tau\vert \acl(a_0,\ldots,\hat{a_i},\ldots, a_{N-2}, a_{N-1})$ equal to the identity map, for $i \leq N-2$. 

A further application of Corollary~\ref{16} then shows that there is an elementary map which is equal to $\sigma$ on $\acl(a_0,\ldots, a_{N-2},a_N)$; is equal to $\tau$ on $\acl(a_0,\ldots, a_{N-1})$; and is the identity on $\acl(a_0,\ldots, \hat{a_i}, \ldots, a_{N-1}, a_{N})$, for all $i \leq N-2$. 
\end{proof}

\begin{corollary}\label{total}
  Let \({T}\) be a stable theory which has separable forking and \({dcl(\emptyset)=acl(\emptyset)}\).  Then
  \({T}\) has an algebraic cover which has
  \({n}\)-amalgamation and \({n}\)-uniqueness for every \({n}\) and over every set.
\end{corollary}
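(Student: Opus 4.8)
The plan is to combine Theorem~\ref{uniquenesscor} with the two main results of this section. First I would apply Theorem~\ref{uniquenesscor} to \(T\) itself (taking \(T_0 = T\)), obtaining an algebraic cover \(T^* \supseteq T\) which has existence and uniqueness for independent \(n\)-amalgamation over \(\acl(\emptyset)\) for all \(n\). Being an algebraic cover of a stable theory, \(T^*\) is again stable, and the construction underlying Theorem~\ref{uniquenesscor} (via Theorem~\ref{47}) also ensures \(\acl(\emptyset) = \dcl(\emptyset)\) in \((T^*)^{\eq}\).

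Next I would transfer separable forking to the cover. Since \(T^*\) is an algebraic cover of \(T\) and \(T\) has separable forking, Lemma~\ref{sec:separ-indep-noti-2} gives that \(T^*\) has separable forking. I would also note that, for any theory, \(n\)-uniqueness over \(\emptyset\) and over \(\acl(\emptyset)\) coincide, because in each case the amalgamation problem has base object \(\acl(\emptyset)\); hence \(T^*\) has \(l\)-uniqueness over \(\emptyset\) for every \(l\).

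With these in place, Theorem~\ref{sepIndepAmalgoverParameters} handles uniqueness over arbitrary sets: fixing any \(N\), the hypothesis that \(T^*\) has \(l\)-uniqueness over \(\emptyset\) for all \(2 \le l \le N+1\) yields \(k\)-uniqueness over every set for all \(2 \le k \le N\). Letting \(N\) range over all natural numbers, \(T^*\) has \(k\)-uniqueness over every set for all \(k\).

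Finally, for \(n\)-amalgamation (that is, \(n\)-existence) over arbitrary sets I would relativise Lemma~\ref{ntonp1}. Given a set \(X\), the theory \(T^*_X\) is stable and, by the previous paragraph, has \(m\)-uniqueness over \(\acl(X) = \acl^{T^*_X}(\emptyset)\) for all \(m\); applying Lemma~\ref{ntonp1} to \(T^*_X\) then gives \(n\)-existence over \(\acl(X)\), equivalently \(n\)-amalgamation for \(T^*\) over \(X\), for all \(n\). As \(X\) was arbitrary, \(T^*\) has both \(n\)-amalgamation and \(n\)-uniqueness over every set for all \(n\). The proof is essentially an assembly of the earlier results, so there is no single difficult step; the only care required is to keep track of the relativisations and to use the coincidence of the two base notions noted above.
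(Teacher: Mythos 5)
Your proposal is correct and follows essentially the same route as the paper, whose proof is just the one-line instruction to combine Theorem~\ref{uniquenesscor} with Theorem~\ref{sepIndepAmalgoverParameters}. You additionally make explicit two steps the paper leaves implicit --- transferring separable forking to the cover via Lemma~\ref{sec:separ-indep-noti-2} so that Theorem~\ref{sepIndepAmalgoverParameters} applies to \(T^*\) rather than \(T\), and deducing \(n\)-existence over arbitrary sets from the relativised Lemma~\ref{ntonp1} --- both of which are needed and correctly handled.
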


\begin{proof}
  Use Theorem~\ref{uniquenesscor}
  and Theorem~\ref{sepIndepAmalgoverParameters} to conclude that the theory has an
  algebraic cover with \({n}\)-uniqueness over every set.  
\end{proof}

\printbibliography

\end{document}